\documentclass[a4,12pt]{article}%

\usepackage{graphicx}
\DeclareGraphicsRule{.1}{mps}{.1}{}
\DeclareGraphicsRule{.2}{mps}{.2}{}
\DeclareGraphicsRule{.3}{mps}{.3}{}
\DeclareGraphicsRule{.4}{mps}{.4}{}
\DeclareGraphicsRule{.5}{mps}{.5}{}
\DeclareGraphicsRule{.6}{mps}{.6}{}
\DeclareGraphicsRule{.7}{mps}{.7}{}
\DeclareGraphicsRule{.8}{mps}{.8}{}
\DeclareGraphicsRule{.9}{mps}{.9}{}
\DeclareGraphicsRule{.10}{mps}{.10}{}
\DeclareGraphicsRule{.11}{mps}{.11}{}
\DeclareGraphicsRule{.12}{mps}{.12}{}
\usepackage{graphics}
\usepackage{amsmath}
\usepackage{amsfonts}
\usepackage{amssymb}
\usepackage{enumerate}
\usepackage{amsmath}
\usepackage{graphicx}
\usepackage{comment}
\usepackage{mathtools}
\DeclareGraphicsRule{.1}{mps}{.1}{}
\usepackage{float}
\usepackage{etoolbox}
\usepackage{natbib}
\usepackage{dsfont}
\usepackage{bm}

\usepackage[utf8]{inputenc}
\usepackage{array}
\newcolumntype{y}[1]{>{\centering\let\newline\\\arraybackslash\hspace{0pt}}p{#1}}

\newtheorem{theorem}{Theorem}[section]

\newtheorem{claim}[theorem]{Claim}

\newtheorem{definition}[theorem]{Definition}

\newtheorem{lemma}[theorem]{Lemma}

\newtheorem{observation}[theorem]{Observation}

\newenvironment{proof}[1][Proof]{\textbf{#1.} }{\ \rule{0.5em}{0.5em}}

\newcommand{\prob}{{\rm \bf P}}

\newcommand{\lcp}{{\rm LCP}}

\newcommand{\dN}{{\mathbb N}}

\newcommand{\dR}{{\mathbb R}}

\newcommand{\ep}{\varepsilon}
\newcommand{\norm}[1]{\left\lVert#1\right\rVert}
\DeclarePairedDelimiter{\abs}{\lvert}{\rvert}
\DeclareMathOperator*{\argmax}{arg\,max}
\newcommand{\cupdot}{\mathbin{\mathaccent\cdot\cup}}
\newcommand{\period}{\text{.}}
\newcommand{\comma}{\text{,}}
\newcommand\restr[2]{{
		\left.\kern-\nulldelimiterspace 
		#1 
		\vphantom{\big|} 
		\right|_{#2} 
}}

\newcounter{figurecounter}
\setcounter{figurecounter}{1}

\usepackage{caption}
\captionsetup[table]{name=Figure}

\begin{document}

\title{Sunspot Equilibrium in Positive Recursive Two-Dimensions Quitting Absorbing Games%
\thanks{The authors acknowledge the support of the Israel Science Foundation, Grant \#217/17.}}
\author{Orin Munk, Eilon Solan%
\thanks{The School of Mathematical Sciences, Tel Aviv
University, Tel Aviv 6997800, Israel. e-mail: orin25@gmail.com, eilonsolan@gmail.com.}}
\maketitle

\begin{abstract}
	A uniform sunspot $\ep$-equilibrium of a dynamic game is a uniform $\ep$-equilibrium in an extended game,
	where the players observe a public signal at every stage.
	We prove that a uniform sunspot $\ep$-equilibrium exists in two classes of multiplayer absorbing games, thereby extending earlier works by Solan and Solan (2019, 2018).
\end{abstract}

\noindent\textbf{Keywords:} stochastic games, absorbing games, spotted games, L-shaped games, uniform equilibrium, sunspot equilibrium.

\section{Introduction}
Existence of uniform equilibrium in stochastic games
has been a topic of great interest in the last decades.
\cite{Shapley} introduced the model of stochastic games,
in which players' actions affect both the stage payoff and the state of the game.
Shapley proved that every two-player zero-sum stochastic game admits a $\lambda$-discounted equilibrium in stationary strategies, for every $\lambda > 0$.
This result was later extended for multiplayer stochastic games by \cite{fink} and \cite{takahashi_1964}.

A strategy profile is a uniform $\ep$-equilibrium if it is a $\lambda$-discounted $\ep$-equilibrium,
for every discount factor $\lambda$ sufficiently close to $0$.
\cite{mertens_neyman} proved that every two-player zero-sum stochastic game admits a uniform $\ep$-equilibrium, for every $\ep > 0$. It is a strategy profile in which each player cannot gain more then $\ep$ by deviating from it, for every small enough $\lambda$. \cite{vieille_two-player_2000_B} extended this result to every two-player non-zero-sum stochastic game.
\cite{solan_three-player} proved the existence of a uniform $\ep$-equilibrium in three-player absorbing games.
\cite{solan_vieille_2001} presented the class of quitting games,
and proved that if the payoffs satisfy a certain condition,
then a uniform $\ep$-equilibrium exists.
For additional results on the existence of uniform $\ep$-equilibrium in stochastic games,
see \cite{simon_2007, simon_2012, simon_2016}.
To this day it is not known if every $n$-player stochastic game admits a uniform $\ep$-equilibrium.

A uniform correlated $\ep$-equilibrium is a uniform $\ep$-equilibrium in an extended game that includes a correlation device.
Various versions of uniform correlated $\ep$-equilibrium have been studied,
depending on the type of correlation that the device allows.
\cite{solan_vieille_correlated_2002} studied extensive-form correlation devices.
These are devices that send a private message to each player in every stage.
They proved that every multiplayer stochastic game admits a uniform extensive-form correlated $\ep$-equilibrium.
\cite{solan_vohra_correlated_quitting_2001, solan_vohra_correlated_2002} studied a normal-form correlation device,
which sends one private message to each player at the outset of the game.
They proved that every absorbing game\footnotemark admits a uniform normal-form correlated $\ep$-equilibrium.
\footnotetext{An absorbing game is a stochastic game with a single non-absorbing state.}
Recently, \cite{quitting, general_quitting} studied uniform sunspot $\ep$-equilibria,
which are uniform $\ep$-equilibria in an extended game that includes a device,
which sends a public message at every stage.
They proved that every quitting game,
as well as every general quitting game\footnotemark,
admits a uniform sunspot $\ep$-equilibrium.
\footnotetext{A quitting game is an absorbing game where,
	in the non-absorbing state,
each player has two actions, continue and quit.
As long as all players continue,
the play remains in the non-absorbing state;
as soon as at least one player quits, the play moves with probability $1$ to some absorbing state.}

This paper is part of a project whose goal is to prove that every multiplayer stochastic game admits a uniform sunspot $\ep$-equilibrium.
Here we will present two classes of absorbing games, spotted games and L-shaped games.
Both of these classes are generalizations of quitting games.
In quitting games there is a single non-absorbing entry.
An absorbing game is \emph{spotted} if every two non-absorbing entries differ by the actions of at least two players.
An absorbing game is \emph{L-shaped} if there are exactly three non-absorbing entries $a^1$, $a^2$, and $a^3$,
and moreover, the action profiles $a^1$ and $a^3$ differ by the action of Player 1 only,
while the action profiles $a^1$ and $a^2$ differ by the action of Player 2 only.
We will show that every game in these classes admits a uniform sunspot $\ep$-equilibrium.

The paper is organized as follows.
The model and the main results are described in Section \ref{section:model}.
The proof of the main result for the first class presented, spotted games, appears in Section \ref{section:spotted}.
The proof of the main result for the second class presented, L-shaped games, appears in Section \ref{section:L-shaped}.
Discussion appears in Section~\ref{section:discussion}.

\section{Model and Main Results}
\label{section:model}

\subsection{Absorbing Games and General Quitting Games}
In this paper we study stochastic games with a single non-absorbing state. These games, called absorbing games, were first studied by \cite{kohlberg_absorbing}.
\begin{definition}
	An \emph{absorbing game} is a tuple $\Gamma = (I, (A_i)_{i \in I}, P, u)$, where
	\begin{itemize}
		\item   $I=\left\{1,2,...,\abs{I}\right\}$ is a non-empty finite set of players.
		\item   For every player $i$, $A_i$ is a non-empty finite set of actions.
		Denote the set of all action profiles by $A = \times_{i \in I} A_i$.
		\item   $u : A \to [0,1]^I$ is a \emph{payoff function}.
		\item	$P : A \to [0,1]^I$ is an \emph{absorption probability function}.
	\end{itemize}
\end{definition}
The game proceeds as follows:
At every stage $t \in \dN$,
until absorption occurs,
each player $i \in I$ chooses an action $a_i^t \in A_i$,
and receives the payoff $u_i(a^t)$,
where $a^t \coloneqq \times_{i \in I} a_i^t$.
With probability $P(a^t)$, the game is absorbed and the players' payoff is $u(a^t)$ in each subsequent stage.
Otherwise, the game continues to the next stage.\\

An action profile $a \in A$ is \emph{absorbing} if $P(a) > 0$,
and \emph{non-absorbing} if $P(a) = 0$.
For a mixed action profile $x \in \Delta(A)$, we denote
$P(x) \coloneqq \sum_{a \in A} x(a) \cdot P(a)$.

\cite{general_quitting} defined a class of absorbing games, called general quitting games.
In this class of games, each player has two types of actions:
continue actions and quitting actions,
and the game is absorbed as soon as at least one player plays a quitting action.
\begin{definition}
	An absorbing game $\Gamma = (I, (A_i)_{i \in I}, P, u)$ is a \emph{general quitting game}
	if for every player $i \in I$, the action set $A_i$ can be divided into two non-empty finite disjoint sets $A_i = C_i \cupdot Q_i$ that satisfy the following condition:
	for every action profile $a \in A$, if $a_j \in Q_j$ for some $j \in I$, then $P(a) = 1$.
	Otherwise, $a \in \times_{i \in I} C_i$ and $P(a) = 0$.
\end{definition}
We will denote a general quitting game by $\Gamma = (I, (C_i)_{i \in I}, (Q_i)_{i \in I}, P, u)$.\\
For every $i \in I$, the actions in $C_i$ are called \emph{continue} actions, and the actions in $Q_i$ are called \emph{quitting} actions.

\begin{definition}
	An absorbing game is called \emph{generic} if every two distinct action profiles $a \ne a' \in A$ yield a different payoff for every player;
	that is, $u_i(a) \ne u_i(a')$ for every $i \in I$.
\end{definition}

\begin{definition}
	An absorbing game $\Gamma = (I, (A_i)_{i \in I}, P, u)$ is \emph{recursive} if every non-absorbing action profile yields payoff 0;
	that is, for every $a \in A$, if $P(a) = 0$ then $u(a) = \vec{0}$.
\end{definition}

\begin{definition}
	A recursive absorbing game $\Gamma = (I, (A_i)_{i \in I}, P, u)$ is \emph{positive} if the payoff function is non-negative; that is, for every $a \in A$ we have $u_i(a) \ge 0$.
\end{definition}

A special class of general quitting games is quitting games, previously studied by \cite{cyclic_1997} and \cite{solan_vieille_2001}.
\begin{definition}
	A general quitting game $\Gamma = (I, (C_i)_{i \in I}, (Q_i)_{i \in I}, P, u)$ is a \emph{quitting game} if $\abs{C_i} = \abs{Q_i} = 1$ for every $i \in I$.
\end{definition}

\subsection{Strategies and Payoff}
\begin{definition}
	Let $\Gamma = (I, (A_i)_{i \in I}, P, u)$ be an absorbing game.
	A \emph{strategy} of player $i$ in $\Gamma$ is a function
	$\sigma_i : \cup_{t \in \dN} A^{t-1} \to \Delta(A_i)$.
	A \emph{strategy profile} is a vector $\sigma = \left( \sigma_i \right)_{i \in I}$
	of strategies,
	one for each player.
\end{definition}

\begin{definition}
	A strategy $\sigma_i$ of player $i$ is called \emph{stationary} if it depends solely on the current stage, and not on past play. Since absorbing games have only one non-absorbing state,
	a stationary strategy of player $i$ is equivalent to a probability distribution in $\Delta(A_i)$.
	A strategy profile $\sigma$ is called \emph{$\ep$-almost stationary} if it is a stationary strategy, supplemented with threats of punishment.
\end{definition}

We will study the concept of uniform equilibrium.
To this end, we present the $T$-stage payoff, the discounted payoff, and the undiscounted payoff.

\begin{definition}
	Let $T \in \dN$, and let $\Gamma = (I, (A_i)_{i \in I}, P, u)$ be an absorbing game.
	The \emph{$T$-stage payoff} of player $i$ in $\Gamma$ under strategy profile $\sigma$ is given by
	$$\gamma_i^T(\sigma) \coloneqq \frac{1}{T} \mathbb{E}_\sigma \left[ \sum_{t=1}^{T} u_i(a^t) \right] \period$$
\end{definition}

\begin{definition}
	Let $\lambda \in (0,1]$, and let $\Gamma = (I, (A_i)_{i \in I}, P, u)$ be an absorbing game.
	The \emph{$\lambda$-discounted payoff} of player $i$ in $\Gamma$ under strategy profile $\sigma$ is given by
	$$\gamma_i^\lambda(\sigma) \coloneqq \mathbb{E}_\sigma \left[ \sum_{t=1}^{\infty} \lambda (1 - \lambda)^{t-1} u_i(a^t) \right] \period$$
\end{definition}

\begin{definition}
	\label{definition:ep-eq}
	Let $\ep > 0$ and let $\Gamma = (I, (A_i)_{i \in I}, P, u)$ be an absorbing game.
	A strategy profile $\sigma$ is an \emph{$\ep$-equilibrium}
	if no player can increase her payoff by more then $\ep$ by deviating from $\sigma$.
	That is, for every player $i \in I$ and every strategy $\sigma_i'$ of player $i$,
	$$\gamma_i^\lambda(\sigma'_i, \sigma_{-i}) \le \gamma_i^\lambda(\sigma) + \ep \period$$
\end{definition}
If this payoff function is a $\lambda$-discounted payoff for some $\lambda \in [0,1]$, then this equilibrium is called \emph{$\lambda$-discounted $\ep$-equilibrium}.
If this payoff function is a $T$-stage payoff for some $T \in \dN$, then this equilibrium is called \emph{$T$-stage $\ep$-equilibrium}.\\
\cite{fink} proved  that for every $\lambda \in (0,1)$,
every stochastic game, and in particular, every absorbing game,
admits a $\lambda$-discounted $0$-equilibrium in stationary strategies.\\
The concept of uniform $\ep$-equilibrium was introduced by \cite{mertens_neyman}.

\begin{definition}
	\label{definition:uniform}
	Let $\ep>0$.
	A strategy profile $\sigma$ is a \emph{uniform $\ep$-equilibrium}
	if it satisfies the following two conditions:
	\begin{itemize}
		\item[($\iota$)]	There exists $\lambda_0 \in (0,1]$ such that
		$\sigma$ is a $\lambda$-discounted $\ep$-equilibrium
		for every $\lambda \in (0,\lambda_0)$.
		\item[($\iota \iota$)]	There exists integer $T_0 \in \dN$ such that
		$\sigma$ is a $T$-stage $\ep$-equilibrium
		for every $T \ge T_0$.
	\end{itemize}
\end{definition}
It follows from \cite{absorbing} that every two-player non-zero sum absorbing game admits a uniform $\ep$-equilibrium, for every $\ep > 0$. \cite{solan_three-player} extended this result to every three-player absorbing game.
To date it is not known whether every four-player absorbing game admits a uniform $\ep$-equilibrium,
for every $\ep > 0$.

\begin{definition}
	Let $\Gamma = (I, (A_i)_{i \in I}, P, u)$ be a positive recursive absorbing game.
	The \emph{undiscounted payoff} of player $i$ in $\Gamma$ under action profile strategy $\sigma$,
	is defined by
	$$\gamma_i(\sigma) \coloneqq
	\lim_{t \to \infty} \mathbb{E}_\sigma \left[ u_i(a^t) \right] \coloneqq
	\sum_{a \in A} u_i(a) \cdot P(a^t = a \mid \sigma) 	\period$$
\end{definition}
The equilibrium described in Definition \ref{definition:ep-eq} is called \emph{undiscounted $\ep$-equilibrium}
if the payoff function used in the definition is the undiscounted payoff.
It follows from \cite{solan_vieille_2001} that there is an equivalence between uniform $\ep$-equilibrium and undiscounted $\ep$-equilibrium in positive recursive absorbing games.\\

The following lemma allows us to focus on generic games.
Though the lemma is stated for the concept of uniform $\ep$-equilibrium,
it applies to all notions of equilibrium that are mentioned in the paper.

\begin{lemma}
	\label{lemma:generic}
	Let $\ep > 0$,
	and let
	$\Gamma = (I, (C_i)_{i \in I}, (Q_i)_{i \in I}, P, u)$ and $\Gamma' = (I, (C_i)_{i \in I}, (Q_i)_{i \in I}, P, u')$ be two absorbing games with the same absorption probability function.
	If $\norm{u - u'}_\infty \le \ep$,
	then a uniform $\ep$-equilibrium of $\Gamma$
	is a uniform $3\ep$-equilibrium of $\Gamma'$.
\end{lemma}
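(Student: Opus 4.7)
The plan is a standard three–term triangle inequality. The key observation is that both the $\lambda$-discounted payoff and the $T$-stage payoff are linear averages of the per-stage payoffs, so a uniform bound on $\|u-u'\|_\infty$ translates directly into the same uniform bound on the payoffs in the dynamic game, independently of the strategy profile used. Specifically, for any strategy profile $\tau$, any player $i\in I$, and any $\lambda\in(0,1]$,
\[
\abs{\gamma_i'^\lambda(\tau)-\gamma_i^\lambda(\tau)}
= \abs{\mathbb{E}_\tau\!\left[\sum_{t=1}^\infty \lambda(1-\lambda)^{t-1}\bigl(u_i'(a^t)-u_i(a^t)\bigr)\right]}
\le \norm{u-u'}_\infty \le \ep,
\]
and an identical bound holds for $T$-stage payoffs, since $\sum_{t=1}^T \tfrac{1}{T}=1$.

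Once this perturbation estimate is in hand, I would fix a uniform $\ep$-equilibrium $\sigma$ of $\Gamma$, with corresponding thresholds $\lambda_0$ and $T_0$ from Definition \ref{definition:uniform}, pick any player $i\in I$, any deviation $\sigma'_i$, and any $\lambda\in(0,\lambda_0)$, and chain three inequalities:
\[
\gamma_i'^\lambda(\sigma'_i,\sigma_{-i})
\le \gamma_i^\lambda(\sigma'_i,\sigma_{-i})+\ep
\le \gamma_i^\lambda(\sigma)+2\ep
\le \gamma_i'^\lambda(\sigma)+3\ep,
\]
where the outer two inequalities use the perturbation bound and the middle one uses that $\sigma$ is an $\ep$-equilibrium of $\Gamma$. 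The same chain works verbatim for $T$-stage payoffs with $T\ge T_0$, so $\sigma$ satisfies both clauses of Definition \ref{definition:uniform} with constant $3\ep$ in the game $\Gamma'$, using the same $\lambda_0$ and $T_0$.

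There is no real obstacle: the whole argument is a one–line perturbation estimate followed by the triangle inequality, and the constant $3$ is exactly what the three terms of the triangle inequality produce. The remark following Lemma \ref{lemma:generic} about applying to all notions of equilibrium in the paper is automatic, because the undiscounted payoff of a positive recursive absorbing game is likewise a convex combination (a limit of convex combinations) of per-stage payoffs, so the same pointwise bound $|\gamma_i'(\tau)-\gamma_i(\tau)|\le\ep$ applies and the same three-term chain yields the conclusion.
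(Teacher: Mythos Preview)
Your argument is correct and is precisely the standard perturbation/triangle-inequality proof one would expect. The paper in fact states Lemma~\ref{lemma:generic} without proof (treating it as routine), so there is no alternative approach to compare against; your write-up supplies exactly the omitted details.
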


\subsection{Sunspot Equilibrium}
We extend the game $\Gamma$ by introducing a public correlation device:
at the beginning of every stage $t \in \dN$ the players observe a public signal $\zeta^t \in [0,1]$ that is drawn according to the uniform distribution,
independently of past signals and play.
The extended game is denoted by $\Gamma^{sun}$.
A \emph{strategy} of player $i$ in the extended game $\Gamma^{sun}$ is a measurable function
$\xi_i : \cup_{t \in \dN} ([0,1]^{t} \times A^{t-1}) \to \Delta(A_i)$.

\begin{definition}
	Every uniform $\ep$-equilibrium of the game $\Gamma^{sun}$ is called a \emph{uniform sunspot $\ep$-equilibrium} of $\Gamma$.
\end{definition}
The main goal of this paper is to study uniform sunspot $\ep$-equilibrium in absorbing games.

\subsection{Uniform Sunspot Equilibrium in General Quitting Games}
\cite{general_quitting} proved the existence of a uniform sunspot $\ep$-equilibrium in general quitting games in which every player has a single quitting action. This result relies on linear complementarity problems, which we present now.

\begin{definition}
	Given an $n\times n$ matrix $R$ and a vector $q \in \dR^n$, the \emph{linear complementarity problem $LCP(R, q)$} is the following problem:
	\begin{eqnarray*}
		\hbox{Find}&&w \in \dR^n_+, z \in \Delta(\left\{0,1,\dots,n\right\}) \nonumber\\
		\hbox{such that}
		&&w = z_0 \cdot q + R \cdot (z_1\dots,z_n)^\intercal, \label{lpc1}\\
		&&z_i = 0 \hbox{ or } w_i = R_{i,i} \ \ \ \forall i \in \left\{1,2,\dots,n\right\}.
		\nonumber
	\end{eqnarray*}
\end{definition}

\begin{definition}
	An $n\times n$ matrix $R$ is called a \emph{$Q$-matrix} if for every $q \in \dR^n$ the linear complementarity problem $\lcp(R,q)$ has at least one solution.
\end{definition}

\begin{definition}
	If an $n\times n$ matrix $R$ is not a $Q$-matrix,
	then a vector $q \in \dR^n$ such that the linear complementarity problem $\lcp(R,q)$ has no solution, is called a \emph{witness} of $R$.
\end{definition}

\begin{definition}
\label{definition:original best response}
Let $\Gamma = (I,(C_i)_{i \in I}, (Q_i)_{i \in I}, P, u)$ be a general quitting game,
such that each player has a single quitting action;
that is, $\abs{Q_i} = 1$ for every $i \in I$.
For every continue action profile $c \in C$,
denote by $R(\Gamma, c)$ the $(|I|\times|I|)$-matrix whose $i$'th column is $u(q_i,c_{-i})$.
\end{definition}

\begin{theorem}[Solan and Solan, 2018]
	\label{theorem:Solan and Solan}
	Let $\Gamma = (I,(C_i)_{i \in I}, (Q_i)_{i \in I}, P, u)$ be a positive recursive general quitting game, where $Q_i = \left\{q_i\right\}$ for every player $i$.
	\begin{itemize}
		\item
		If the matrix $R(\Gamma, c)$ is not a $Q$-matrix for every action profile $c \in \times_{i 
		\in I} \Delta(C_i)$ of continue actions, then there is an absorbing stationary strategy profile $x$
			such that for every $\ep > 0$ the stationary strategy $x$, supplemented with threats of punishment, is a uniform $\ep$-equilibrium of $\Gamma$.
		\item
		If the matrix $R(\Gamma, c)$ is a $Q$-matrix for some action profile $c \in \times_{i \in I} \Delta(C_i)$ of continue actions,
		then for every $\ep > 0$ the game $\Gamma$ admits a uniform sunspot $\ep$-equilibrium $\xi$ in which,
		after every finite history, at most one player $i$ quits with positive probability,
		and does so with probability at most $\ep$,
		while all other players play $c_{-i}$.
	\end{itemize}
\end{theorem}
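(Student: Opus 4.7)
First, by Lemma~\ref{lemma:generic} I would restrict to generic games. The common starting point for both parts is the sequence $(x^\lambda)_{\lambda \in (0,1)}$ of stationary $\lambda$-discounted $0$-equilibria guaranteed by \cite{fink}. Pass to a subsequence so that $x^\lambda \to x^*$, rescale the quitting probabilities $\beta_i^\lambda := x_i^\lambda(q_i)$ into relative rates $z_i^\lambda := \beta_i^\lambda / (\lambda + \sum_j \beta_j^\lambda)$ and $z_0^\lambda := \lambda / (\lambda + \sum_j \beta_j^\lambda)$, and extract a further limit $z^* \in \Delta(\{0,1,\ldots,|I|\})$. A clean dichotomy then arises: either $P(x^*) > 0$, in which case $x^*$ is an absorbing stationary profile, or $x^*$ lies in $\times_{i \in I} \Delta(C_i)$ and is a continue action profile $c$.

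For Part 1, assume no $R(\Gamma,c)$ is a $Q$-matrix. In the absorbing branch, the undiscounted payoffs $\gamma_i(x^\lambda)$ converge to $\gamma_i(x^*)$ by continuity of the absorbing distribution on $\{P > 0\}$, and the best-response conditions pass to the limit, so $x^*$ is an undiscounted $0$-equilibrium; the equivalence between undiscounted and uniform equilibria in positive recursive absorbing games (noted by \cite{solan_vieille_2001}), combined with punishment threats built from uniform minmax values (available via \cite{mertens_neyman} applied to the two-player zero-sum auxiliary games in which a single deviator faces the coalition of the others), yields a uniform $\ep$-equilibrium for every $\ep > 0$. I would rule out the non-absorbing branch by contradiction: in that branch, writing $w^*$ for the limit of $\gamma^\lambda(x^\lambda)$, the first-order stationarity conditions of $x^\lambda$ pass to the limit to give the LCP relations $w^* = z_0^* q^* + R(\Gamma,c)(z_1^*,\ldots,z_{|I|}^*)^\intercal$ together with $z_i^* = 0$ or $w_i^* = R_{ii}$, for a particular $q^*$ assembled from the asymptotic discounted continuation values. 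Admissible perturbations of the original sequence produce LCP solutions for each witness in a full-dimensional neighbourhood of $q^*$, contradicting the non-$Q$-matrix hypothesis.

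For Part 2, suppose $R(\Gamma,c)$ is a $Q$-matrix for some $c \in \times_{i \in I} \Delta(C_i)$, and choose a witness $q$ whose coordinates sit slightly above the individually rational minmax values of the players; let $(w,z)$ solve $\lcp(R(\Gamma,c), q)$. Define the sunspot strategy $\xi$ stage by stage via the public signal $\zeta^t$: with probability $\ep z_i$ select player $i$ to play $q_i$ while the others play $c_{-i}$; with the remaining probability $1-\ep(1-z_0)$ let all players play $c$. Any deviation detected in the history triggers an indefinite switch of the non-deviators to the minmax strategy against the deviator. Complementary slackness $z_i > 0 \Rightarrow w_i = R_{ii}$ makes a selected player $i$ indifferent between quitting and continuing at her selection stage, while the LCP equation together with the individual rationality of the punishments ensures that no unilateral deviation at any other stage gains more than $\ep$.

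The main obstacle is the passage from the one-shot LCP solution to the infinite-horizon uniform equilibrium. Three checks require care: (i) the per-stage absorption rate $\ep(1-z_0)$ must be large enough that both the $T$-stage and the $\lambda$-discounted payoffs approximate the absorbing payoff within $O(\ep)$, uniformly across deviations --- here positive recursiveness is essential, since it forces non-absorbing stages to contribute nothing; (ii) the witness $q$ must be chosen so that the minmax punishments embedded in $\xi$ are individually rational for every player, which is where the $Q$-matrix hypothesis is genuinely used (not every $q$ admits an LCP solution, but an individually rational one does); and (iii) the accumulated best-response slack over the random geometric absorption time must stay within $\ep$. Point (iii) is the most delicate because the quitting rate per stage is $O(\ep)$ while the number of stages until absorption is $\Theta(1/\ep)$, so the error budget has to be managed tightly via the $w_i = R_{ii}$ equalities rather than merely up to a constant factor.
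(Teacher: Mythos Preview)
The paper does not contain a proof of this theorem: it is quoted from \cite{general_quitting} and used throughout as a black box (see the attribution ``Solan and Solan, 2018'' in the statement and the remark immediately following it). There is therefore nothing in the present paper to compare your proposal against.

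That said, two points in your sketch deserve attention. First, in Part~2 you write ``choose a witness $q$ whose coordinates sit slightly above the individually rational minmax values''. In the paper's terminology a \emph{witness} is precisely a vector $q$ for which $\lcp(R,q)$ has \emph{no} solution; if $R(\Gamma,c)$ is a $Q$-matrix there are no witnesses at all. You simply mean ``choose a vector $q$'', and the $Q$-matrix hypothesis is what guarantees a solution exists for that $q$.

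Second, and more substantively, your i.i.d.\ per-stage lottery does not by itself pin down the right continuation payoffs. Under your $\xi$, the undiscounted payoff in the positive recursive game is the expected absorbing payoff $\tfrac{1}{1-z_0}\sum_i z_i\, r^i(c) = \tfrac{1}{1-z_0}(w - z_0 q)$, which equals $w$ only when $z_0 = 0$. So the complementary slackness relation $w_i = R_{ii}$ gives indifference for the selected quitter only if you can arrange $z_0 = 0$, or else build the $q$-component into the continuation via a more elaborate cyclic schedule. This is exactly where the construction in \cite{general_quitting} is delicate: the sunspot does not merely randomise over players stage by stage, but follows a designed periodic path along which the continuation payoffs at each node match the LCP values. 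Your points (i)--(iii) identify the right concerns, but the resolution of (iii) needs that structure, not just tight bookkeeping on an i.i.d.\ scheme.
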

Theorem \ref{theorem:Solan and Solan} is valid in the case where some players have no quitting actions, as well as when the absorbing probabilities are smaller then 1, that is, $P(a) \in (0,1]$ whenever $a_i \in Q_i$ for at least one player $i \in I$.

\subsection{Quitting Absorbing Games}
Quitting absorbing games are absorbing games where at least one player has a quitting action.
\begin{definition}
	\label{definition:quitting absorbing game}
	An absorbing game $\Gamma = (I, (A_i)_{i \in I}, P, u)$ is called a \emph{quitting absorbing game}
	if for every $i \in I$, the set $A_i$ can be divided into two disjoint sets of \emph{continue actions} and \emph{quitting actions}, $A_i = C_i \cupdot Q_i$ such that
	\begin{itemize}
		\item	For every quitting action $q_i \in Q_i$ and for every action profile $a_{-i} \in A_{-i}$, $P(q_i, a_{-i}) > 0$.
		\item	For every continue action $c_i \in c_i$ there is a continue action profile $c_{-i} \in \times_{j \ne i} C_{j}$ such that $P(c_i, c_{-i}) = 0$.
		\item	There is at least one quitting action, namely $\cup_{i \in I} Q_i \ne \phi$.
	\end{itemize}
\end{definition}
We denote a quitting absorbing game by $\Gamma = (I, (C_i)_{i \in I}, (Q_i)_{i \in I}, P, u)$.
In contrast to general quitting games, where the game continues with probability 1 as soon as no player plays a quitting action, in quitting absorbing games the game may absorb in this case.

\subsection{Two-Dimensions Quitting Absorbing Games}
\label{section:two dimension}
We here define a simple class of quitting absorbing games, where two players have two continue actions, while all other players have a single continue action.

\begin{definition}
	A \emph{two-dimension quitting absorbing game} is a quitting absorbing game $\Gamma = (I, (C_i)_{i \in I}, (Q_i)_{i \in I}, P, u)$ that satisfies $|C_1|=|C_2|=2$ and $|C_i|=1$ for every $i > 2$.
\end{definition}

In two-dimension quitting absorbing games, up to equivalences, there are 6 possible absorption structures for action profiles in $\times_{i \in I} C_i$:
\begin{itemize}
	\item All action profiles are absorbing. In this case the game is equivalent to a one-shot game.
	\item There are three absorption structures for which the game is a general quitting game, see Figure \ref{table:general}.
	\item There are two additional absorption structures, see Figures \ref{table:L-shaped and spotted}.
\end{itemize}
If the absorption structure of $\Gamma$ is equivalent to that in Figure \ref{table:L-shaped and spotted} (left), then the game is called \emph{L-shaped}.
If the absorption structure of $\Gamma$ is equivalent to that in Figure \ref{table:L-shaped and spotted} (right), then the game is called \emph{spotted}.\\

\begin{table}[h!]
	\begin{center}
	\begin{tabular}{c|| c| c| c} 
		 & C & C & Q \\ [0.5ex] 
		\hline \hline
		C &  &  & * \\ 
		\hline
		C &  &  & * \\
		\hline
		Q & * & * & * \\
	\end{tabular}
	\quad
	\begin{tabular}{c|| c| c| c} 
	& C & C & Q \\ [0.5ex] 
	\hline \hline
	C &  &  & * \\ 
	\hline
	C & * & * & * \\
	\hline
	Q & * & * & * \\
\end{tabular}
	\quad
\begin{tabular}{c|| c| c| c} 
	& C & C & Q \\ [0.5ex] 
	\hline \hline
	C &  & * & * \\ 
	\hline
	C & * & * & * \\
	\hline
	Q & * & * & * \\
\end{tabular}
\caption{Two-dimension quitting absorbing games that are general quitting games.}
\label{table:general}
\end{center}
\end{table}

\begin{table}[h!]
	\begin{center}
		\begin{tabular}{c|| c| c| c} 
			& C & C & Q \\ [0.5ex] 
			\hline \hline
			C &  &  & * \\ 
			\hline
			C &  & * & * \\
			\hline
			Q & * & * & * \\
		\end{tabular}
	\quad
	\quad
	\quad
		\begin{tabular}{c|| c| c| c} 
			& C & C & Q \\ [0.5ex] 
			\hline \hline
			C & * &  & * \\ 
			\hline
			C &  & * & * \\
			\hline
			Q & * & * & * \\
		\end{tabular}
		\caption{L-shaped game (left) and spotted game (right).}
		\label{table:L-shaped and spotted}
	\end{center}
\end{table}

\begin{definition}
	An \emph{L-shaped game} is a two-dimension quitting absorbing game $\Gamma = (I, (C_i)_{i \in I}, (Q_i)_{i \in I}, P, u)$ with only one action profile $a \in \times_{i \in I} C_i$ such that $P(a) > 0$.
	\end{definition}
	The game in Figure \ref{table:L-shaped and spotted} (left) is an L-shaped game.

\begin{definition}
	A \emph{spotted game} is an absorbing game $\Gamma = (I, A = \times_{i \in I} A_i, P, u)$ such that every two distinct non-absorbing action profiles differ by at least two coordinates. That is, if $a \ne a' \in A$ are two non-absorbing action profiles, then there are $i \ne j \in I$ such that $a_i \ne a'_i$ and $a_j \ne a'_j$.
	\end{definition}
	The game in Figure \ref{table:L-shaped and spotted} (right) is a spotted game.
We note that the definition of spotted games relates to every absorbing game, and not necessarily to two-dimension quitting absorbing games.

\subsection{Main Results}
The main results of the paper extend the existence of a uniform sunspot $\ep$-equilibrium to the two new classes of games, defined in Section \ref{section:two dimension}.

\begin{theorem}
	\label{theorem:spotted}
	Every positive recursive spotted game admits a uniform sunspot $\ep$-equilibrium, for every $\ep > 0$.
\end{theorem}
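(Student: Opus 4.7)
The strategy is to exploit the defining property of a spotted game---every single-coordinate deviation from a non-absorbing profile lands in an absorbing profile---and reduce the problem to Theorem~\ref{theorem:Solan and Solan} applied to an auxiliary general quitting game. First, by Lemma~\ref{lemma:generic} I may assume the game is generic. For every non-absorbing action profile $c=(c_i)_{i\in I}$, I would construct an auxiliary game $\tilde\Gamma_c$ by declaring $c_i$ to be Player~$i$'s unique continue action and every $a_i\in A_i\setminus\{c_i\}$ to be a quitting action. On profiles that differ from $c$ in at most one coordinate the spotted property makes them absorbing in $\Gamma$, and I inherit the corresponding absorption probabilities and payoffs. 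On profiles that differ from $c$ in two or more coordinates I set the absorption probability to $1$ (with any bounded absorbing payoff), which turns $\tilde\Gamma_c$ into a positive recursive general quitting game to which Theorem~\ref{theorem:Solan and Solan} applies.

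Theorem~\ref{theorem:Solan and Solan} provides a dichotomy for $\tilde\Gamma_c$. If there exist a non-absorbing $c$, a single quitting action per player, and a mixed continue profile for which $R(\tilde\Gamma_c,c)$ is a $Q$-matrix, then $\tilde\Gamma_c$ admits a uniform sunspot $\ep$-equilibrium $\xi$ in which, at every history, at most one player quits and with probability at most $\ep$. Under such a $\xi$, a unilateral deviation by a different player produces a simultaneous two-coordinate deviation with per-stage probability $O(\ep)$ only, so the discrepancy between the equilibrium payoffs in $\tilde\Gamma_c$ and the corresponding payoffs in $\Gamma$ is $O(\ep)$; consequently $\xi$, supplemented by punishment threats off path, is a uniform sunspot $O(\ep)$-equilibrium of $\Gamma$. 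If the $Q$-matrix condition fails at every non-absorbing $c$ (and every choice of continue mixed profile and quitting action), Theorem~\ref{theorem:Solan and Solan} produces for each $c$ an absorbing stationary $\ep$-equilibrium of $\tilde\Gamma_c$, which I would combine---using the non-$Q$-matrix witnesses as obstructions---into a single stationary $\ep$-equilibrium of $\Gamma$ whose play almost surely absorbs without dwelling at any non-absorbing cell.

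The main obstacle lies in the second branch: assembling the locally produced absorbing stationary equilibria, one per non-absorbing $c$, into a globally consistent stationary strategy. Each witness supplies a payoff vector that identifies, at the corresponding $c$, at least one player who strictly prefers to quit; the plan is to thread these local obstructions inductively over the (finite) set of non-absorbing profiles, using genericity, so that every non-absorbing cell is deterred by some player's threat. A subsidiary technical point is verifying that the arbitrary payoffs assigned to $\tilde\Gamma_c$ on two-or-more-deviation profiles do not affect the equilibrium condition; this should follow because along $\xi$ such profiles are reached with per-stage probability at most $\ep$, so their total contribution to any player's expected payoff is $O(\ep)$ and can be absorbed into the $\ep$ slack of the equilibrium.
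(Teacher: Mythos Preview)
Your first branch is essentially the paper's Case~(E.1): for a non-absorbing profile $c$ you build the auxiliary general quitting game in which $c_i$ is the unique continue action, apply Theorem~\ref{theorem:Solan and Solan}, and transfer the resulting sunspot $\ep$-equilibrium back to $\Gamma$. The paper does exactly this (with the simplification that, since the game is generic, it fixes for each player the single best absorbing deviation and works with the matrix $R(c)$ built from those columns).

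The genuine gap is in your second branch. The plan of taking, for each non-absorbing $c$, the absorbing stationary equilibrium of $\tilde\Gamma_c$ produced by Theorem~\ref{theorem:Solan and Solan} and then ``threading'' these objects together does not work: the absorbing stationary equilibrium of $\tilde\Gamma_c$ lives in a game whose absorption probabilities and payoffs have been altered on every profile differing from $c$ in two or more coordinates, so it carries no information about $\Gamma$ away from $c$. There is no inductive mechanism that glues these local objects into a global stationary equilibrium of $\Gamma$. Your reading of a witness as ``identifying a player who strictly prefers to quit'' is also not what a witness gives you; a witness $q$ is a vector for which $\lcp(R(c),q)$ has no solution, nothing more.

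The paper's Case~(E.2) avoids this by constructing a \emph{single} auxiliary game $\widehat\Gamma$ with the \emph{same} absorption structure as $\Gamma$, changing only the non-absorbing payoffs: at each non-absorbing $a$ the payoff is set to a witness $q^a$ of $R(a)$. One then takes a family $(x^\lambda)_\lambda$ of $\lambda$-discounted stationary equilibria of $\widehat\Gamma$ and lets $x^0=\lim_{\lambda\to 0}x^\lambda$. If $x^0$ were non-absorbing it would equal some non-absorbing $a$; the limiting equilibrium and indifference conditions at $a$ then furnish a solution of $\lcp(R(a),q^a)$, contradicting the choice of $q^a$. Hence $x^0$ is absorbing, and since $\widehat\Gamma$ and $\Gamma$ agree on all absorbing payoffs, $x^0$ (with punishments) is a stationary uniform $\ep$-equilibrium of $\Gamma$. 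This global, single-game argument is the missing idea in your proposal.
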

Section~\ref{section:spotted} is dedicated to the proof of Theorem \ref{theorem:spotted}.

\begin{theorem}
	\label{theorem:main}
	Every positive recursive two-dimension quitting absorbing game admits a uniform sunspot $\ep$-equilibrium, for every $\ep > 0$.
\end{theorem}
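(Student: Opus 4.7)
My plan is a case analysis over the six absorption structures for action profiles in $\times_{i \in I} C_i$ enumerated in Section \ref{section:two dimension}. In each case, a uniform sunspot $\ep$-equilibrium either already exists in the literature or is furnished by one of the two main theorems of this paper, so the bulk of the proof amounts to assembling these ingredients under the right dichotomy rather than constructing anything new.

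The simplest case is when every action profile in $\times_{i \in I} C_i$ is absorbing. Then every action profile of $\Gamma$ is absorbing and the game is effectively a one-shot game; a Nash equilibrium of the associated one-shot game (which exists by Nash's theorem) yields a stationary strategy profile that is a uniform $0$-equilibrium and hence a uniform sunspot $\ep$-equilibrium. When the absorption structure is equivalent to one of the three patterns in Figure \ref{table:general}, the game $\Gamma$ is a positive recursive general quitting game, possibly with some players lacking a quitting action and with absorption probabilities in $(0,1]$ rather than exactly $1$. Theorem \ref{theorem:Solan and Solan}, together with the extension noted immediately after its statement, then provides a uniform sunspot $\ep$-equilibrium in each of these three subcases, according to whether the matrix $R(\Gamma,c)$ is a $Q$-matrix for some $c$ or for none.

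The two remaining absorption patterns are precisely the L-shaped and spotted structures of Figure \ref{table:L-shaped and spotted}. The spotted subcase is handled by Theorem \ref{theorem:spotted}, which in fact applies to the broader class of all positive recursive spotted games, while the L-shaped subcase is handled by the main theorem of Section \ref{section:L-shaped}. The main obstacle therefore does not lie in the present proof but in those two auxiliary theorems; given them, Theorem \ref{theorem:main} reduces to verifying that the equivalences used to collapse the enumeration to six patterns (relabelings of players and of each player's actions) preserve positivity, recursiveness, and the notion of a uniform sunspot $\ep$-equilibrium, which is immediate from the definitions.
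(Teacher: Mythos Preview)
Your proposal is correct and follows essentially the same approach as the paper: the paper reduces Theorem~\ref{theorem:main} to the L-shaped case by invoking \cite{general_quitting} for the three general-quitting structures and Theorem~\ref{theorem:spotted} for the spotted structure, then handles the L-shaped case via Lemma~\ref{lemma:L-shaped main} in Section~\ref{section:L-shaped}. Your enumeration of the six absorption patterns and the corresponding references matches this decomposition exactly.
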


In view of \cite{general_quitting} and Theorem \ref{theorem:spotted}, to prove Theorem \ref{theorem:main} it is sufficient to prove the existence of a uniform sunspot $\ep$-equilibrium in L-shaped two-dimension quitting absorbing games. This will be done in Section \ref{section:L-shaped}.

\section{Spotted Games}
\label{section:spotted}
In this section we prove Theorem \ref{theorem:spotted}. The proof is an adaptation of the proof of \cite{quitting}.
By Lemma \ref{lemma:generic},
to prove that an absorbing game admits a sunspot $\ep$-equilibrium, we can assume without loss of generality that the game is generic.\\

\textbf{Step 1: Constructing best response matrices}\\
Let $\Gamma = (I, (C_i)_{i \in I}, (Q_i)_{i \in I}, P, u)$ be a generic spotted game.
For every non-absorbing action profile $a \in A$ and every player $i \in I$,
the \emph{best absorbing deviation} of player $i$ against $a$ is
$b_i(a) \coloneqq \argmax_{a_i' \ne a_i}{u(a_i', a_{-i})}$.
Since the game is spotted, all deviations of each player $i$ from $a$ are absorbing.
Since the game is generic, the best absorbing deviation is uniquely defined.
Let $r^i(a) \coloneqq u(b_i(a), a_{-i})$. This is the payoff vector when player $i$ optimally deviates from $a$.
Let $R(a)$ be the $\abs{I} \times \abs{I}$ matrix whose $i$-th column is $r^i(a)$.
One of the following two conditions must hold:
\begin{enumerate}
	\item[(E.1)] $R(a)$ is a Q-matrix for some non-absorbing action profile $a \in A$.
	\item[(E.2)] $R(a)$ is not a Q-matrix for every non-absorbing action profile $a \in A$.
\end{enumerate}

\textbf{Step 2: Case (E.1) yields a uniform sunspot $\ep$-equilibrium}\\
Let $a' \in A$ be such that $R(a')$ is a Q-matrix.
We show that in this case a sunspot $\ep$-equilibrium exists for every $\ep > 0$.
Construct an auxiliary general quitting game $\Gamma(a') = (I, (C'_i)_{i \in I}, (Q'_i)_{i \in I}, P', u)$,
where $C'_i = \left\{a'_i\right\}$,
$Q'_i = A_i \setminus \left\{a'_i\right\}$,
and
\[
P'(a) \coloneqq \begin{dcases*}
P(a)  & if $P(a) > 0$,\\
0 & if $a = a'$,\\
1 & otherwise.
\end{dcases*}
\]
By Theorem \ref{theorem:Solan and Solan}, the game $\Gamma(a')$ admits a uniform sunspot $\ep$-equilibrium $\xi$, where all players play the action profile $a' \in A$, and in each stage only one player plays the best absorbing deviation with a positive probability.
The reader can verify that the strategy profile $\xi$ is a uniform sunspot $\ep$-equilibrium in $\Gamma$ as well, since the best deviations from $a'$ in both games $\Gamma$ and $\Gamma(a')$ are the same.\\

\textbf{Step 3: Case (E.2) yields a stationary uniform $\ep$-equilibrium}\\
The condition implies that for every non-absorbing action profile $a \in A$ there is a vector $q^a \in \mathbb{R}^I$ such that the linear complementarity problem $\lcp(R(a), q^a)$ has no solution.
Let $\widehat{\Gamma} = (I, (C_i)_{i \in I}, (Q_i)_{i \in I}, P, \widehat{u})$ be the auxiliary spotted game that is identical to $\Gamma$,
except for the payoff at non-absorbing action profiles:
\[
\widehat{u}(a) \coloneqq \begin{dcases*}
u(a)  & if $P(a) > 0$,\\
q^a & if $P(a) = 0$.
\end{dcases*}
\]
For every $\lambda \in (0,1]$ let $x^\lambda$ be a stationary $\lambda$-discounted equilibrium of $\widehat{\Gamma}$, such that the limit $x^0 \coloneqq \lim_{\lambda \to 0} x^\lambda$ exists.
As in \cite{quitting}, because $R(a)$ is not a Q-matrix for every non-absorbing action profile,
$x^0$ must be absorbing. Moreover, $x^0$ is a stationary uniform $\ep$-equilibrium for every $\ep > 0$.

\section{The L-Shaped Game}
\label{section:L-shaped}
In this section we prove the following result, which, together with Theorem \ref{theorem:spotted} and Theorem \ref{theorem:Solan and Solan}, implies Theorem \ref{theorem:main}.
\begin{lemma}
\label{lemma:L-shaped main}
	Every positive recursive L-shaped game admits a uniform sunspot $\ep$-equilibrium, for every $\ep>0$.
\end{lemma}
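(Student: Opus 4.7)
The plan is to parallel the two-case structure of the spotted-game proof in Section~\ref{section:spotted}, with additional bookkeeping to accommodate the ``internal'' non-absorbing deviations intrinsic to the L-shape. Let $a^1$ denote the corner of the L and $a^2, a^3$ its two arms, so that $a^1$ and $a^k$ (for $k=2,3$) differ only in the action of Player $k$. By Lemma~\ref{lemma:generic} I may assume the game is generic. For each non-absorbing profile $a^k$ and each player $i$, let $b_i(a^k)$ be the best \emph{absorbing} deviation of player $i$ against $a^k$, set $r^i(a^k) := u(b_i(a^k), a_{-i}^k)$, and let $R(a^k)$ be the $|I| \times |I|$ matrix whose $i$-th column is $r^i(a^k)$. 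Crucially, from each arm $a^2, a^3$ exactly one non-absorbing deviation exists (back to $a^1$), while from the corner $a^1$ there are two (to $a^2$ and to $a^3$).

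In the case where none of $R(a^1), R(a^2), R(a^3)$ is a $Q$-matrix, I would repeat Step~3 of Section~\ref{section:spotted}: pick witnesses $q^{a^k}$ for each $R(a^k)$, define the perturbed game $\widehat\Gamma$ with $\widehat u(a^k) := q^{a^k}$ and unchanged payoffs elsewhere, and let $x^0$ be a subsequential limit of stationary $\lambda$-discounted equilibria $x^\lambda$ of $\widehat\Gamma$ as $\lambda\to 0$. The LCP argument of \cite{quitting} forces $x^0$ to be absorbing, whence $x^0$ is a stationary uniform $\ep$-equilibrium of $\Gamma$. One must additionally check that the two extra non-absorbing one-shot deviations do not spoil the LCP derivation; this reduces to noting that such a deviation keeps the stage payoff at $0$ and leaves the continuation under $x^0$ unchanged, so the equilibrium conditions at the limit still match the LCP encoded in the $R(a^k)$'s.

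In the case where $R(a^k)$ is a $Q$-matrix for some $k$, I would centre the construction at $a^k$ and reduce to Theorem~\ref{theorem:Solan and Solan}. When $k\in\{2,3\}$ only one non-absorbing deviation needs to be suppressed, and a standard punishment threat on the deviating player allows that move to be treated as an extra quitting action with payoff equal to the threat level; the auxiliary game is then a bona fide general quitting game, and the sunspot equilibrium it admits transfers back to $\Gamma$. When $k=1$ both Player~1's move to $a^3$ and Player~2's move to $a^2$ must be suppressed, using continuation values $V^2, V^3$ obtained from sunspot $\ep$-equilibria of subgames beginning at the corresponding arms.

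The main obstacle is the $k=1$ subcase: the continuation values $V^2, V^3$ themselves reference the equilibrium at $a^1$, since from either arm a player can still move back to the corner, producing a circular dependence. To break the loop I would fix a candidate payoff vector at $a^1$, solve each arm subgame (which, after freezing the corner continuation, becomes a genuine general quitting game amenable to Theorem~\ref{theorem:Solan and Solan}), and then seek a fixed point of the induced map on candidate payoffs at $a^1$. Existence should follow from a monotonicity-cum-compactness argument exploiting the positivity of the payoffs; verifying that the resulting joint sunspot profile is a uniform $\ep$-equilibrium then reduces to checking that every unilateral deviation gains at most $O(\ep)$, which follows from the $Q$-matrix property of $R(a^1)$ together with the definition of $V^2, V^3$.
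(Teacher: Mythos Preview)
Your overall dichotomy mirrors the paper's, but both branches have genuine gaps that the paper resolves with substantially different machinery.

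In the non-$Q$ case, the decisive difference from spotted games is that the non-absorbing set is no longer a finite collection of isolated pure profiles: it is the union of the two edges $\{(1-t)a^1 + t a^2 : t\in[0,1]\}$ and $\{(1-t)a^1 + t a^3 : t\in[0,1]\}$. Your witness argument, copied from Step~3 of Section~\ref{section:spotted}, at best rules out $x^0 \in \{a^1, a^2, a^3\}$; it says nothing about $x^0$ being a strict mixture along one of these edges, and there is no reason the convex combination $(1-t)q^{a^1}+tq^{a^2}$ should be a witness for the best-response matrix at the mixed profile. Such mixtures really do occur as limits---this is precisely what the paper calls ``type~1'' and ``type~2'' intervals in Step~4 of the proof of Lemma~\ref{lemma:NQL main}. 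To handle them the paper builds a one-parameter family of auxiliary games $\Gamma_{\omega,\theta}(\mathcal{Q})$, applies Browder's Theorem (Theorem~\ref{Theorem:browder}) to obtain a connected component of discounted equilibria, and tracks that component as $(\lambda,\omega)\to 0$ using Lemmas~\ref{lemma:alpha equilibrium} and~\ref{lemma:auxiliary absorbing}. This is a far more delicate argument than choosing three witnesses, and your sketch does not supply a substitute for it.

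In the $Q$ case your reduction is also off. The paper does not freeze the non-absorbing deviations using punishment thresholds or a fixed point in continuation values $V^2, V^3$; instead it introduces the auxiliary games $\Gamma^{\delta_1,\delta_2}$ of Definition~\ref{def:auxiliary}, in which $a^2$ and $a^3$ become absorbing with payoff $u(a^4)$ and small absorption probabilities $\delta_1,\delta_2$. The point is that when Player~1 plays $c_1^2$ with small probability while Player~2 simultaneously plays $c_2^2$ with small probability, the joint event realises $a^4$---so $u(a^4)$ is the canonical absorbing payoff to attach to the arms, not an externally imposed punishment or a fixed-point continuation. The QL/NQL split (Definitions~\ref{definition:dQ}--\ref{definition:CNQ}) is accordingly based on whether the best-response matrices of these $\Gamma^{\delta_1,\delta_2}$ are $Q$-matrices, not on your $R(a^k)$. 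Your proposed fixed-point loop on $(V^2,V^3)$ never appears; instead the QL case (Lemma~\ref{lemma:Q-L sunspot}) translates a sunspot equilibrium of $\Gamma^{\delta_1,\delta_2}$ back to $\Gamma$ by having Player~2 (resp.~Player~1) inject small probability on $c_2^2$ (resp.~$c_1^2$) whenever the device instructs Player~1 (resp.~Player~2) to ``quit'' via $c_1^2$ (resp.~$c_2^2$), so that the realised absorption law in $\Gamma$ matches that in the auxiliary game.
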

Throughout the section we will denote the two continue actions of Players 1 and 2 by
$C_1 = \left\{c_1^1,c_1^2\right\}$, $C_2 = \left\{c_2^1,c_2^2\right\}$, respectively,
and assume that $P(c_1^2, c_2^2, c_3,..., c_{|I|}) > 0$.
We will use the following notations:
\begin{eqnarray*}
	a^1 \coloneqq (c_1^1, c_2^1, c_3,...c_{|I|})\comma
	& &a^2 \coloneqq (c_1^1, c_2^2, c_3,...c_{|I|})\comma\\
	a^3 \coloneqq (c_1^2, c_2^1, c_3,...c_{|I|})\comma
	& \text{and} &a^4 \coloneqq (c_1^2, c_2^2, c_3,...c_{|I|}) \period
\end{eqnarray*}

\begin{table}[h!]
	\begin{center}
		\begin{tabular}{c|| c| c| c} 
			& $c_2^1$ & $c_2^2$ & $q_2$ \\ [0.5ex] 
			\hline \hline
			$c_1^1$ & $a^1$ & $a^2$ & * \\ 
			\hline
			$c_1^2$ & $a^3$ & $a^4$ * & * \\
			\hline
			$q_1$ & * & * & * \\
		\end{tabular}
		\caption{An L-shaped game.}
		\label{table:L-shaped a1a2a3a4}
	\end{center}
\end{table}

\subsection{Auxiliary Games}
We start by introducing a collection of auxiliary games that are derived from the L-shaped game by turning some non-absorbing action profiles into absorbing action profiles.

\begin{definition}
	\label{def:auxiliary}
	Let $\delta_1, \delta_2 \in [0,1]$ and let $\Gamma = (I, (C_i)_{i \in I}, (Q_i)_{i \in I}, P, u)$ be an L-shaped game.
	Let $\Gamma^{\delta_1,\delta_2} = (I, (C_i)_{i \in I}, (Q_i)_{i \in I}, P^{\delta_1, \delta_2}, u^{\delta_1, \delta_2})$ be the quitting absorbing game that is defined as follows (see Figure \ref{table:L-shaped and auxiliary}):
	\begin{itemize}
		\item $P^{\delta_1, \delta_2}(a^2) \coloneqq \delta_2$
				and $P^{\delta_1, \delta_2}(a^3) \coloneqq \delta_1$.
		\item If $\delta_2 > 0$ then $u^{\delta_1, \delta_2}(a^2) \coloneqq u(a^4)$.
		Otherwise $u^{\delta_1, \delta_2}(a^2) \coloneqq u(a^2)$.
		\item If $\delta_1 > 0$ then $u^{\delta_1, \delta_2}(a^3) \coloneqq u(a^4)$.
		Otherwise $u^{\delta_1, \delta_2}(a^3) \coloneqq u(a^3)$.
		\item For every action profile $a \in A$ such that $a \ne a^2, a^3$, we set
		$P^{\delta_1, \delta_2}(a) \coloneqq P(a)$
		and $u^{\delta_1, \delta_2}(a) \coloneqq u(a)$.
	\end{itemize}
\end{definition}
In other words, the auxiliary game $\Gamma^{\delta_1,\delta_2}$ is similar to $\Gamma$,
but we turn one or two action profiles to absorbing,
with absorbing payoff that is equal to $u(a^4)$.
The auxiliary game $\Gamma^{\delta_1,\delta_2}$ is a quitting game if $\delta_1, \delta_2 > 0$ and a general quitting game if $\max\left\{\delta_1, \delta_2\right\} > 0$.\\

\begin{table}[h!]
	\begin{center}
		\begin{tabular}{c|| c| c| c} 
			& $c_2^1$ & $c_2^2$ & $q_2$ \\ [0.5ex] 
			\hline \hline
			$c_1^1$ &  &  & * \\ 
			\hline
			$c_1^2$ &  & $u(a^4)$ * & * \\
			\hline
			$q_1$ & * & * & * \\
		\end{tabular}
		\quad
		\begin{tabular}{c|| c| c| c} 
			& $c_2^1$ & $c_2^2$ & $q_2$ \\ [0.5ex] 
			\hline \hline
			$c_1^1$ &  & $u(a^4)\ ^{\delta_2}$* & * \\ 
			\hline
			$c_1^2$ & $u(a^4)\ ^{\delta_1}$* & $u(a^4)$ * & * \\
			\hline
			$q_1$ & * & * & * \\
		\end{tabular}
		\caption{An L-shaped game $\Gamma$ (left)
			and the auxiliary game $\Gamma^{\delta_1, \delta_2}$ (right).}
		\label{table:L-shaped and auxiliary}
	\end{center}
\end{table}

We will denote the min-max value of player $i$ in the absorbing game $\Gamma$ by
$\overline{v}_i(\Gamma) \coloneqq \min_{\sigma_i} \max_{\sigma_{-i}} \gamma_i(\sigma_i, \sigma_{-i})$
This is a lower bound on player $i$'s equilibrium payoff.
We will prove that the min-max value of each player in the auxiliary game is not lower by much than her min-max value in the original game.
To this end we need the following result.

\begin{lemma}
	\label{lemma:pre-minmax}
	Let $\Gamma = (I, (A_i)_{i \in I}, u, P)$ be a positive recursive quitting absorbing game.
	Fix a player $i \in I$, such that $\abs{Q_i} > 0$.
	For every $\ep > 0$ there exists $T_\ep \in \dN$ such that
	for every strategy profile $x_{-i}$
	there is a pure strategy $a_i$ of player $i$ such that
	\begin{itemize}
		\item   The payoff under $(a_i,x_{-i})$ is high:
		$\gamma_i(a_i,x_{-i}) \geq \overline{v}_i(\Gamma) - \ep$.
		\item
		The probability of absorption up to time $T_\ep$ under $(a_i,x_{-i})$ is high:
		$\prob_{a_i.x_{-i}}(\Gamma \text{ is absorbed in the first } T_\ep \text{ stages}) \geq 1-\ep$.
	\end{itemize}
\end{lemma}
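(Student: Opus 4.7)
The construction has two phases: a near-best-response phase followed by a forced-quit phase using some $q_i \in Q_i$. Let $p_{\min} := \min_{q_i \in Q_i,\, a_{-i} \in A_{-i}} P(q_i, a_{-i})$, which is strictly positive by the definition of a quitting absorbing game; fix some $q_i \in Q_i$ and choose $s_\ep \in \dN$ with $(1-p_{\min})^{s_\ep} \le \ep$. I propose $T_\ep := T_1 + s_\ep$, where $T_1$ will be determined uniformly in $x_{-i}$ (see the last paragraph). Given $x_{-i}$, let $\sigma_i^*$ be a pure history-dependent $\ep/4$-best response of player $i$ against $x_{-i}$, so $\gamma_i(\sigma_i^*, x_{-i}) \ge \overline{v}_i(\Gamma) - \ep/4$; such a strategy exists since, with $x_{-i}$ fixed, player $i$ faces a Markov decision problem with bounded non-negative payoffs. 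Define $a_i$ to play according to $\sigma_i^*$ at every stage $t \le T_1$, and to play $q_i$ at every subsequent stage until absorption.

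\textbf{Verifying the two conditions.} On the event $\{\tau > T_1\}$, player $i$ plays $q_i$ for $s_\ep$ consecutive stages, each absorbing with probability at least $p_{\min}$ regardless of $x_{-i}$'s play; hence $\Prob_{a_i, x_{-i}}(\tau > T_\ep) \le (1-p_{\min})^{s_\ep} \le \ep$, giving the fast-absorption property. For the payoff property, since $u_i \ge 0$ in positive recursive games, the contribution of the $q_i$-tail to $\gamma_i(a_i, x_{-i})$ is non-negative; comparing with $\sigma_i^*$ stage by stage on the first $T_1$ stages yields
\[
\gamma_i(a_i, x_{-i}) \ge \E_{\sigma_i^*, x_{-i}}\!\left[u_i(a^\tau)\mathbf{1}_{\tau \le T_1}\right] \ge \gamma_i(\sigma_i^*, x_{-i}) - \Prob_{\sigma_i^*, x_{-i}}(\tau > T_1) \ge \overline{v}_i(\Gamma) - \frac{\ep}{4} - \Prob_{\sigma_i^*, x_{-i}}(\tau > T_1)\period
\]

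\textbf{The main obstacle.} The hard part is controlling the residual $\Prob_{\sigma_i^*, x_{-i}}(\tau > T_1)$ by $\ep/2$ with a $T_1$ chosen uniformly in $x_{-i}$: a naive best response may delay absorption arbitrarily long when $x_{-i}$ produces only a tiny per-stage absorption rate. To bypass the dependence on the particular $\sigma_i^*$, let $V_{T_1}(x_{-i})$ denote the supremum of $\gamma_i(\cdot, x_{-i})$ over all pure strategies that switch to $q_i$ after stage $T_1$; the construction above realizes a value within $\ep/4$ of this supremum. Because truncating any strategy to $T_1$ stages and then appending $q_i$ loses at most $\Prob(\tau > T_1)$ in payoff, $V_{T_1}(x_{-i})$ is monotone non-decreasing in $T_1$ and its limit is $\sup_{\sigma_i} \gamma_i(\sigma_i, x_{-i}) \ge \overline{v}_i(\Gamma)$; taking the infimum over $x_{-i}$ yields $\inf_{x_{-i}} V_{T_1}(x_{-i}) \nearrow \overline{v}_i(\Gamma)$. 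The remaining technical step is to upgrade this monotone pointwise statement to uniform convergence, so that a single $T_1 = T_1(\ep,\Gamma)$ satisfies $V_{T_1}(x_{-i}) \ge \overline{v}_i(\Gamma) - \ep/2$ for every $x_{-i}$; I plan to carry this out by a minimax/compactness argument exploiting the finiteness of the action sets and the uniform lower bound $p_{\min}$ on absorption during the $q_i$-tail, which makes the truncation loss uniformly controllable.
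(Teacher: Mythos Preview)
Your approach is genuinely different from the paper's, and the difference matters. The paper does not construct a two-phase strategy at all: it argues that a single \emph{pure stationary action} $a_i \in A_i$ already does the job, and it obtains the explicit bound $T_\ep = 1/\ep^2$. The case split is on the per-stage absorption rate induced by $x_{-i}$: either every action of player~$i$ absorbs with per-stage probability below $\ep$, in which case $x_{-i}$ is $\ep$-close to a non-absorbing continue profile and some quitting action $q_i$ already yields $\gamma_i(q_i,x_{-i}) \ge \overline v_i(\Gamma)-\ep$ with absorption in one stage; or some action $a_i$ has per-stage absorption at least $\ep$, and then either a quitting action is already $\ep$-good, or a continue action with absorption rate $\ge \ep$ is a best response and absorbs within $1/\ep^2$ stages with high probability. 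Either way $T_\ep$ is explicit and uniform, and no ``play optimally for $T_1$ stages, then force-quit'' construction is needed.

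Your construction is a reasonable alternative idea, but the part you flag as ``the remaining technical step'' is a real gap, and the suggested fix is shaky. You propose to show $\inf_{x_{-i}} V_{T_1}(x_{-i}) \nearrow \overline v_i(\Gamma)$ via a ``minimax/compactness argument,'' but the space of history-dependent strategy profiles $x_{-i}$ is not compact in any topology that makes $V_{T_1}$ continuous, so a Dini-type upgrade from pointwise to uniform convergence is not available out of the box. Even restricting to stationary $x_{-i}$, the limit function $V_\infty(x_{-i}) = \sup_{\sigma_i}\gamma_i(\sigma_i,x_{-i})$ can be discontinuous at non-absorbing profiles, which again blocks Dini. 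The quantity $p_{\min}$ you introduced controls only the forced-quit tail and gives you nothing about absorption during the first $T_1$ stages under $\sigma_i^*$, which is exactly where the uniformity must come from. The clean way to close the gap is precisely the paper's observation: split on the per-stage absorption rate against $x_{-i}$, so that either quitting immediately is already near-optimal, or some fixed action absorbs at rate $\ge \ep$ per stage and hence within $O(1/\ep^2)$ stages---this gives the uniform $T_1$ directly and in fact makes the two-phase construction unnecessary.
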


\begin{proof}
	Since the game is positive recursive,
	and since player $i$ can obtain a positive payoff by quitting,
	then the min-max value of player $i$ is positive, that is,
	$\overline{v}_i(\Gamma) > 0$.
	Fix $\ep > 0$ sufficiently small.\\
	Let $x_{-i} \in \times_{j \ne i} C_j$ be a strategy profile such that under $(a_i,x_{-i})$ the play never absorbs,
	for every continue action action $c_i \in C_i$ of player $i$.
	Since $\overline{v}_i(\Gamma) > 0$, it follows that
	there is a quitting action $q_i \in Q_i$
	such that $\gamma_i(q_i,x_{-i}) \geq \overline{v}_i(\Gamma)$.
	It follows that $\gamma_i(q_i,x'_{-i}) \geq \overline{v}_i(\Gamma) - \ep$ for every strategy profile $x'_{-i}$ that satisfies $\norm{x_{-i}-x'_{-i}}_{\infty} \leq \ep$.
	We deduce that there is a quitting action $q_i \in Q_i$ satisfies the requirements with $T_\ep=1$,
	for every strategy profile $x'_{-i}$ for which $\prob(\text{absorption} \mid (a_i,x'_{-i})) \leq \ep$ for every $a_i \in A_i$.\\
	Let $x_{-i}$ be a strategy profile such that for some continue action $c_i \in C_i$ of player $i$, under $(c_i,x_{-i})$ the game absorbs.
	Then one of the following conditions holds:
	\begin{itemize}
		\item   Player $i$ has a quitting action $q'_i \in Q_i$ such that $\gamma_i(q_i,x_{-i}) \geq \overline{v}_i(\Gamma) -\ep$.
		\item	For every quitting action $q_i \in Q_i$ of player $i$, $\gamma_i(q_i,x_{-i}) < \overline{v}_i(\Gamma) -\ep$ and there is a continue action $c_i \in C_i$ such that $\prob(\text{absorption} \mid c_i,x_{-i}) \geq \ep$.
	\end{itemize}
	In the first case, it follows that the quitting action $q'_i \in Q_i$ satisfies the requirements with $T_\ep=1$ against $x_{-i}$.
	In the latter case, it follows that $\gamma_i(a_i,x_{-i}) \geq \overline{v}_i(\Gamma)$.
	We deduce that the continue action $c_i$ satisfies the requirements with $T_\ep=\tfrac{1}{\ep^2}$.
\end{proof}

\begin{lemma}
	\label{lemma:minmax}
	Let $\Gamma$ be a quitting absorbing game.
	Then for every $\ep>0$,
	there are $\delta_1', \delta_2'> 0$,
	such that for every $\delta_1 \in [0,\delta_1')$, $\delta_2 \in [0,\delta_2')$,
	and every player $i$ we have:
	\[
	\overline{v}_i(\Gamma^{\delta_1,\delta_2}) \ge \overline{v}_i(\Gamma) - \ep.
	\]
\end{lemma}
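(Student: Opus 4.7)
The approach is to show that, even in the perturbed game, Player~$i$ can secure nearly her original min-max payoff using a pure action that absorbs the play so quickly that the small perturbations $\delta_1,\delta_2$ on $a^2,a^3$ rarely take effect. The mechanism is supplied by Lemma~\ref{lemma:pre-minmax}, which furnishes a \emph{rapidly absorbing} near best response against any stationary strategy of the coalition.

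Fix $\ep>0$ and apply Lemma~\ref{lemma:pre-minmax} to $\Gamma$ with parameter $\ep/3$ to obtain a horizon $T=T_{\ep/3}$. Set $\delta_1'=\delta_2'=\ep/(6T)$ and fix $\delta_1\in[0,\delta_1')$, $\delta_2\in[0,\delta_2')$. Consider first a player $i$ with $\abs{Q_i}\ge 1$; the case $\abs{Q_i}=0$ is trivial, since then $\overline{v}_i(\Gamma)=0$ because the coalition can sustain the non-absorbing profile $a^1$ forever, and $\overline{v}_i(\Gamma^{\delta_1,\delta_2})\ge 0$ by positive recursiveness. For every stationary strategy $x_{-i}$ of the coalition (viewed simultaneously as a strategy in both games), Lemma~\ref{lemma:pre-minmax} produces a pure action $a_i=a_i(x_{-i})$ with
\[
\gamma_i^{\Gamma}(a_i,x_{-i})\ge\overline{v}_i(\Gamma)-\ep/3\quad\text{and}\quad\Prob^{\Gamma}_{a_i,x_{-i}}(\tau\le T)\ge 1-\ep/3,
\]
where $\tau$ denotes the absorption time in $\Gamma$.

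Now couple the plays of $(a_i,x_{-i})$ in $\Gamma$ and in $\Gamma^{\delta_1,\delta_2}$ by using identical action realizations and identical ``regular'' absorption trials at every profile; the perturbed play carries \emph{additional} independent absorption trials at $a^2$ and $a^3$ with probabilities $\delta_2$ and $\delta_1$, and the corresponding absorbing payoff is $u(a^4)\ge 0$. Let $E$ be the event that no extra absorption is triggered in stages $1,\dots,T$. Then $\Prob(E^c)\le T(\delta_1+\delta_2)\le\ep/3$, and on $E\cap\{\tau\le T\}$ the two plays absorb at the same stage at the same profile, yielding identical payoffs. Using non-negativity of $u$ to lower bound the contribution of $E^c\cup\{\tau>T\}$ in the perturbed game by zero, we obtain
\[
\gamma_i^{\Gamma^{\delta_1,\delta_2}}(a_i,x_{-i})\ge\E^{\Gamma}\bigl[u_i(a^\tau)\cdot\mathds{1}_{E\cap\{\tau\le T\}}\bigr]\ge\gamma_i^{\Gamma}(a_i,x_{-i})-\Prob(E^c)-\Prob(\tau>T)\ge\overline{v}_i(\Gamma)-\ep.
\]
Since $x_{-i}$ was arbitrary, $\overline{v}_i(\Gamma^{\delta_1,\delta_2})\ge\overline{v}_i(\Gamma)-\ep$ whenever the coalition is restricted to stationary strategies.

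\textbf{Main obstacle.} The chief difficulty is extending from stationary coalition strategies to arbitrary $\sigma_{-i}$, since Lemma~\ref{lemma:pre-minmax} is formulated only for the stationary case. This extension is routine in positive recursive absorbing games: one either invokes the classical fact that the punishing coalition admits an $\ep$-optimal stationary strategy (via Shapley--Fink applied to $\lambda$-discounted min-max values, together with the convergence $\overline{v}_i^\lambda\to\overline{v}_i$ as $\lambda\to 0$), or one applies Lemma~\ref{lemma:pre-minmax} adaptively at each non-absorbed history of a general $\sigma_{-i}$ to the stationary continuation it prescribes there, letting Player~$i$ choose her pure action history-by-history. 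Either route closes the argument.
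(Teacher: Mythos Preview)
Your proof is correct and follows essentially the same approach as the paper: both invoke Lemma~\ref{lemma:pre-minmax} to obtain a rapidly absorbing near-best response and a horizon $T$, then take $\delta_1',\delta_2'$ of order $\ep/T$ so that the extra absorptions at $a^2,a^3$ perturb the payoff by at most $\ep$. Your version is somewhat more explicit (the coupling argument and the discussion of stationary versus general coalition strategies, which the paper glosses over), but the underlying idea is identical.
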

\begin{proof}
	Let $i \in I$.
	Since the game is positive and recursive, if $\abs{Q_i} = 0$ then
	$$ \overline{v}_i(\Gamma^{\delta_1,\delta_2}) \ge 0 = \overline{v}_i(\Gamma) \period$$
	Then assume $\abs{Q_i} > 0$.
	Let $\ep > 0$ and choose $\ep ' \leq \frac{\ep}{4}$.
	Let $\sigma_i$ and $T$ be, respectively, the strategy and the integer given by Lemma~\ref{lemma:pre-minmax}, with respect to player $i$ and $\ep '$ in the game $\Gamma$.
	Denote $\delta' = \frac{\ep'}{2T}$, and fix $\delta_1,\delta_2 < \delta'$.
	Hence $T\cdot (\delta_1 + \delta_2) \leq \ep '$.\\
	If player $i$ follows $\sigma_i$ in the game $\Gamma^{\delta_1,\delta_2}$, then with probability larger than $1 - \ep ' - T\cdot \delta_1 - T\cdot \delta_2$,
	the game is absorbed in the first $T$ stages by an action profile that is absorbing in $\Gamma$ as well.
	Denote by $\gamma^{\delta_1, \delta_2}_i(\sigma)$ the payoff of player $i$ in $\Gamma^{\delta_1, \delta_2}$ under $\sigma$.
	Then, since payoffs are bounded by $1$,
	\begin{eqnarray*}
	\gamma^{\delta_1, \delta_2}_i(\sigma_i, \sigma_{-i}) &\ge& (1 - \epsilon ' - T\cdot \delta_1 - T\cdot \delta_2) \cdot \gamma_{i}(\sigma_i, \sigma_{-i} \mid\text{absorption at the first T stages})\\
	&\ge& (1 - \ep ' - T\cdot (\delta_1 + \delta_2)) \cdot (\overline{v}_i(\Gamma) - 2\cdot \ep ')\\
	&\ge& \overline{v}_i(\Gamma) - \ep ' - \frac{\ep}{4} - 2\cdot \ep '\\
	&\ge& \overline{v}_i(\Gamma) - 4 \cdot \frac{\ep}{4} \\
	&=& \overline{v}_i(\Gamma) - \ep	
	\end{eqnarray*}
\end{proof}

\subsection{Characterization of L-shaped games}
In this section we divide L-shaped games into two classes of games.
This partition is analogous to the one given in Theorem \ref{theorem:Solan and Solan}.
We start by generalize the definition of best response matrix, presented in Definition \ref{definition:original best response}.

\begin{definition}
	\label{definition:best response matrix}
	Let $\Gamma = (I,(C_i)_{i \in I},(Q_i)_{i \in I},u)$ be a general quitting game, and let $c \in \times_{i \in I} \Delta(C_I)$ be a profile of mixed continue actions.
	The matrix $R$ is a \emph{best response matrix} to $c$ if $R$ is a $(|I|\times|I|)$-matrix whose $i$'th column is $u(q_i,c_{-i})$, where $q_i \in \Delta(Q_i)$ is a best quitting response to $c_{-i}$, that is, $q_i \in \argmax_{\Delta(Q_i)} u_i(q_i,c_{-i})$.
	Denote by $\mathcal{R}(\Gamma, c)$ the set of all best response matrices to the mixed action profile $c$ in the game $\Gamma$.
\end{definition}

\begin{observation}
	\label{observarion:best response matrix}
	Let $\delta_1, \delta_2 \in [0,1)$, and let $\alpha_1, \alpha_2 \in (0,1)$.
	If the matrix $R$ is a best response matrix of $\Gamma^{\delta_1, \delta_2}$,
	then $R$ is also a best response matrix of $\Gamma^{\alpha_1 \delta_1, \alpha_2 \delta_2}$.
\end{observation}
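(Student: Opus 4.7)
The strategy is to show that $\Gamma^{\delta_1, \delta_2}$ and $\Gamma^{\alpha_1 \delta_1, \alpha_2 \delta_2}$ have exactly the same set of best response matrices, by verifying that Definition~\ref{definition:best response matrix} depends on these games only through data that is invariant under positive scaling of $\delta_1$ and $\delta_2$. The two ingredients used to build the matrix are (i) the continue/quit partition $(C_i, Q_i)_{i\in I}$ used when viewing the auxiliary game as a (possibly extended) general quitting game, and (ii) the payoff values $u(q_i, c_{-i})$ at quit-response profiles. I would check that each of these is identical in the two games.

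For (ii), by Definition~\ref{def:auxiliary} the payoff function $u^{\delta_1,\delta_2}$ differs from $u$ only at $a^2$ and $a^3$, and the modification depends on $\delta_1,\delta_2$ only through their signs: $u^{\delta_1,\delta_2}(a^2)$ equals $u(a^4)$ when $\delta_2 > 0$ and $u(a^2)$ when $\delta_2 = 0$, and symmetrically for $a^3$. Since $\alpha_i \in (0,1)$, the sign of $\alpha_i \delta_i$ matches the sign of $\delta_i$, hence $u^{\delta_1,\delta_2} = u^{\alpha_1\delta_1,\alpha_2\delta_2}$ pointwise on $A$. An entirely parallel argument handles (i): the actions $c_1^2$ and $c_2^2$ behave as quit actions in the auxiliary game precisely when $\delta_1 > 0$, respectively $\delta_2 > 0$, and this status is preserved by positive scaling.

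Putting the two together, the set of best response matrices to any continue profile $c\in\times_{i\in I}\Delta(C_i)$ is the same in $\Gamma^{\delta_1,\delta_2}$ and $\Gamma^{\alpha_1\delta_1,\alpha_2\delta_2}$, which proves the observation. I do not anticipate a serious obstacle. The only delicate point is bookkeeping around the convention used when viewing $\Gamma^{\delta_1,\delta_2}$ as a general quitting game in the extended sense (since continue profiles may partially absorb when $\delta_i>0$); but whichever convention is adopted, it depends only on the signs of the $\delta_i$, so the argument above is unaffected.
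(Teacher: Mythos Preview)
Your proposal is correct. In the paper this statement is recorded as an Observation without proof; your argument supplies exactly the natural justification, namely that by Definition~\ref{def:auxiliary} both the payoff function $u^{\delta_1,\delta_2}$ and the continue/quit partition of the auxiliary game depend on $(\delta_1,\delta_2)$ only through the signs of $\delta_1$ and $\delta_2$, which are preserved under multiplication by $\alpha_1,\alpha_2\in(0,1)$.
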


\begin{definition}
	\label{definition:dQ}
	An L-shaped game is called a \emph{QL} game
	if there are $\delta_1, \delta_2 \in [0,1)$ such that $\max\left\{\delta_1, \delta_2\right\} > 0$ and at least one of the best response matrices of $\Gamma^{\delta_1, \delta_2}$ is a Q-matrix.
\end{definition}

\begin{definition}
	\label{definition:CNQ}
	An L-shaped game is called an \emph{NQL} (non-Q L-shaped) game
	if the following sets do not contain a Q-matrix:
	\begin{itemize}
		\item[(NQ1)] $\mathcal{R}(\Gamma^{1,0}, c)$
		\item[(NQ2)] $\mathcal{R}(\Gamma^{0,1}, c)$
		\item[(NQ3)] $\mathcal{R}(\Gamma^{1,1}, c)$
	\end{itemize}
	where $c = (c_1^1, c_2^1, c^3, \dots, c^{\abs{I}} )$.
\end{definition}
Obviously, every L-shaped game is either a QL game, an NQL game, or both.
Section \ref{section:Q} will be dedicated to discuss QL games, while section \ref{section:non q} will be dedicated to discuss NQL games.

\subsection{QL Games}
\label{section:Q}
In this section we prove that QL games admit uniform sunspot $\ep$-equilibrium, for every $\ep > 0$.
The proof is similar to the proof of the analogous result for general quitting games,
which was stated as Theorem \ref{theorem:Solan and Solan}.
\begin{lemma}
	\label{lemma:Q sunspot}
	Let $\Gamma = (I, (C_i)_{i \in I}, (Q_i)_{i \in I}, P, u)$ be a QL game.
	Then for every $\delta_1, \delta_2 \in (0, 1]$, at least one of the games
	$\Gamma^{\delta_1, \delta_2}$, $\Gamma^{\delta_1, 0}$, and $\Gamma^{0, \delta_2}$,
	admits a uniform sunspot $\ep$-equilibrium.
	Moreover, in that game,
	there is a continue mixed action profile $c \in C$,
	such that at each stage of the sunspot $\ep$-equilibrium,
	at most one player $i$ quits with positive probability,
	and does so with probability at most $\ep$,
	while all other players follow $c_{-i}$.
\end{lemma}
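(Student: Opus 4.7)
The plan is to exploit the QL assumption to produce a best response matrix that is a Q-matrix in one of the three candidate auxiliary games, and then invoke the second bullet of Theorem~\ref{theorem:Solan and Solan}. Since $\Gamma$ is a QL game, Definition~\ref{definition:dQ} supplies parameters $\delta_1^*, \delta_2^* \in [0,1)$ with $\max\{\delta_1^*, \delta_2^*\} > 0$ and a best response matrix $R \in \mathcal{R}(\Gamma^{\delta_1^*, \delta_2^*}, c)$ that is a Q-matrix, for some continue action profile $c$. I would split into three cases according to the sign pattern of $(\delta_1^*, \delta_2^*)$: (a) both positive, (b) only $\delta_1^* > 0$, and (c) only $\delta_2^* > 0$.

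The key step, carried out in each case, is to verify that $R$ is also a best response matrix of the corresponding target game $\Gamma^{\delta_1, \delta_2}$, $\Gamma^{\delta_1, 0}$, or $\Gamma^{0, \delta_2}$ at the same $c$. Inspecting Definition~\ref{def:auxiliary} directly, one sees that the modified payoff entries $u^{\alpha_1, \alpha_2}(a^2)$ and $u^{\alpha_1, \alpha_2}(a^3)$ collapse to the fixed value $u(a^4)$ as soon as the corresponding $\alpha_i$ is positive, and all other entries coincide with $u$. Consequently, the set of best response matrices at $c$ depends only on the sign pattern of $(\alpha_1, \alpha_2)$ and not on its particular positive values. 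Observation~\ref{observarion:best response matrix} already records this invariance under downscaling; the same direct inspection extends it across the full range $\delta_i \in (0,1]$. Hence in case (a), $R$ is a Q-matrix BRM of $\Gamma^{\delta_1, \delta_2}$ for the given $\delta_1, \delta_2 > 0$; similarly in cases (b) and (c) for $\Gamma^{\delta_1, 0}$ and $\Gamma^{0, \delta_2}$ respectively.

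I would then feed the selected auxiliary game, together with the Q-matrix BRM $R$ at $c$, into the second bullet of Theorem~\ref{theorem:Solan and Solan}. This directly produces a uniform sunspot $\ep$-equilibrium $\xi$ in which, after every finite history, at most one player $i$ quits with positive probability and does so with probability at most $\ep$, while the remaining players follow $c_{-i}$. This matches the ``moreover'' clause of the lemma verbatim.

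The main obstacle will be justifying the invocation of Theorem~\ref{theorem:Solan and Solan} for the selected auxiliary game. Once $c_1^2$ and/or $c_2^2$ are reinterpreted as effective quitting actions of Players 1 and 2, these players may carry several quitting actions (the original ones in $Q_i$ together with $c_i^2$), and the associated absorption probabilities $\delta_1, \delta_2$ may lie strictly between $0$ and $1$. Both generalizations are already asserted in the paragraph immediately following Theorem~\ref{theorem:Solan and Solan}, so what remains is a routine bookkeeping check: one must verify that, under the relabeling, the game is indeed a general quitting game in the sense required, that $c$ still consists of continue actions, and that the strategy $\xi$ produced for the auxiliary game inherits the one-player-quits-at-a-time structure claimed by the lemma.
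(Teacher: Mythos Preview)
Your proposal is correct and follows essentially the same approach as the paper: use the QL hypothesis to obtain a Q-matrix best response matrix in some $\Gamma^{\delta_1^*,\delta_2^*}$, transfer it via the sign-pattern invariance of best response matrices to one of $\Gamma^{\delta_1,\delta_2}$, $\Gamma^{\delta_1,0}$, $\Gamma^{0,\delta_2}$, and then invoke the second bullet of Theorem~\ref{theorem:Solan and Solan} (together with the remark following it about multiple quitting actions and absorption probabilities below~1). If anything you are more careful than the paper, since Observation~\ref{observarion:best response matrix} as stated only covers downscaling $\alpha_i\in(0,1)$, whereas passing from $\delta_i^*\in[0,1)$ to an arbitrary $\delta_i\in(0,1]$ may require upscaling; your direct inspection of Definition~\ref{def:auxiliary} correctly fills this small gap.
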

\begin{proof}
	Let $\delta_1, \delta_2 \in (0, 1]$.
	Since $\Gamma = (I, (C_i)_{i \in I}, (Q_i)_{i \in I}, P, u)$ is a QL game,
	according to Observation \ref{observarion:best response matrix},
	at least one of the games $\Gamma^{\delta_1, \delta_2}$, $\Gamma^{\delta_1, 0}$, and $\Gamma^{0, \delta_2}$,
	admits a best response matrix $R$ which is a Q-matrix.
	Denote this game by $\Gamma'$.
	By Theorem \ref{theorem:Solan and Solan},
	the auxiliary game $\Gamma'$ admits a uniform sunspot $\ep$-equilibrium of the desired form.
\end{proof}

\begin{lemma}
	\label{lemma:Q-L sunspot}
	Let $\Gamma = (I, (C_i)_{i \in I}, (Q_i)_{i \in I}, P, u)$ be a QL game. Then $\Gamma$ admits a uniform sunspot $\ep$-equilibrium, for every $\ep > 0$.
\end{lemma}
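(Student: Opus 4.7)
The plan is to deduce the result from Lemma~\ref{lemma:Q sunspot} by taking the auxiliary perturbation parameters $\delta_1,\delta_2$ small and then arguing that the resulting sunspot $\ep'$-equilibrium in one of the auxiliary games carries over, possibly after a sunspot-driven modification, to a sunspot $\ep$-equilibrium of $\Gamma$. Given $\ep>0$, I would first pick $\delta_1,\delta_2>0$ and $\ep'>0$ (to be calibrated) and invoke Lemma~\ref{lemma:Q sunspot} to obtain a uniform sunspot $\ep'$-equilibrium $\xi$ in some $\Gamma'\in\{\Gamma^{\delta_1,\delta_2},\Gamma^{\delta_1,0},\Gamma^{0,\delta_2}\}$. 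By its canonical form, $\xi$ is described by a mixed continue profile $c\in\times_{i\in I}\Delta(C_i)$, with the rule that at every stage at most one player uses a quitting action with probability at most $\ep'$, while the remaining players follow $c_{-i}$.

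The games $\Gamma$ and $\Gamma'$ coincide except at the two non-absorbing profiles $a^2,a^3$, where $\Gamma'$ absorbs with probabilities $\delta_2,\delta_1$ and absorbed payoff $u(a^4)$, whereas $\Gamma$ does not absorb and yields stage payoff $0$. For every strategy profile $\sigma$, the payoff discrepancy $|\gamma_i^{\Gamma'}(\sigma)-\gamma_i^{\Gamma}(\sigma)|$ is bounded by the $\Gamma'$-probability of first absorbing at $a^2$ or $a^3$, which is in turn at most $(\delta_1+\delta_2)$ divided by the per-stage rate of absorption via quitting or via $a^4$. In the \emph{regular} case $c_1(c_1^2)c_2(c_2^2)>0$, that latter rate is bounded below by $c_1(c_1^2)c_2(c_2^2)P(a^4)>0$, so for $\delta_1,\delta_2$ sufficiently small the discrepancy is arbitrarily small; combined with Lemma~\ref{lemma:minmax}, which ensures that the min-max values of $\Gamma$ and $\Gamma'$ agree up to $\ep'$, the equilibrium condition for $\xi$ transfers from $\Gamma'$ to $\Gamma$ with total error of order $\ep'+\delta_1+\delta_2$, which can be made smaller than $\ep$ by suitable choice of parameters.

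The main obstacle is the \emph{degenerate} case $c_1(c_1^2)c_2(c_2^2)=0$, in which $a^4$ is never played under $c$, so the payoff $\gamma_i^{\Gamma'}(\xi)$ is essentially carried by the $\delta$-absorptions at $a^2,a^3$, which disappear in $\Gamma$. To handle it I would modify $\xi$ using the public sunspot: at each stage, with a tuned small probability $\beta$, the sunspot issues a signal instructing the players to jointly play $a^4$, which is absorbing in $\Gamma$ with payoff $u(a^4)$, thereby mimicking within $\Gamma$ the auxiliary absorptions of the corresponding stage. The frequency $\beta$ is calibrated so that the per-stage absorption rate in $\Gamma$ under the modified profile matches that of $\xi$ in $\Gamma'$; incentive compatibility of the injected signal is controlled by combining the min-max bound of Lemma~\ref{lemma:minmax} with a standard threat-of-punishment continuation upon unilateral deviation, keeping $\beta$ small enough that the stage loss from refusing to follow the signal is dominated by the continuation threat. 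Taking $\delta_1,\delta_2,\ep',\beta\to 0$ in an appropriate order then yields the desired uniform sunspot $\ep$-equilibrium of $\Gamma$.
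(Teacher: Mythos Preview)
Your case split does not carve the problem correctly, and the ``degenerate'' branch---which in fact carries the whole argument---has a real incentive gap.

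First, the ``regular'' case $c_1(c_1^2)\,c_2(c_2^2)>0$ is vacuous. The profile $c$ produced by Lemma~\ref{lemma:Q sunspot} (via Theorem~\ref{theorem:Solan and Solan}) is a \emph{continue} profile of the auxiliary game $\Gamma'$, not of $\Gamma$. In $\Gamma^{\delta_1,0}$ the action $c_1^2$ is absorbing against every $a_{-1}$ (both $a^3$ and $a^4$ absorb), so it is a quitting action there and $c_1(c_1^2)=0$; symmetrically $c_2(c_2^2)=0$ in $\Gamma^{0,\delta_2}$; and both vanish in $\Gamma^{\delta_1,\delta_2}$. Hence your product is always zero, and the only discrepancy between $\Gamma$ and $\Gamma'$ arises precisely in phases where the designated ``quitter'' is Player~$1$ using $c_1^2$ (or Player~$2$ using $c_2^2$). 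In those phases the per-stage absorption in $\Gamma$ may well be zero, so you cannot simply take $\delta$ small and transfer $\xi$.

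Second, your sunspot instruction ``jointly play $a^4$'' does not come with a workable incentive argument. Suppose $\Gamma'=\Gamma^{\delta_1,0}$ and the signal tells Player~$1$ to play $c_1^2$ and Player~$2$ to play $c_2^2$. Player~$2$'s deviation to $c_2^1$ lands the play at $a^3$, which is non-absorbing in $\Gamma$; when $P(a^4)<1$ a single non-absorption reveals nothing, so a one-shot signal cannot be policed. Even granting detection, the threat can only push Player~$2$ down to $\overline v_2(\Gamma)$, and there is no reason why $u_2(a^4)\ge \overline v_2(\Gamma)$: in $\xi$ Player~$2$ was never asked to make $a^4$ happen, so $\gamma_2(\xi)$ (and hence Player~$2$'s continuation) may well exceed $u_2(a^4)$. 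Making $\beta$ small does not help---the incentive problem is at the signalled stage itself, not in the long-run average.

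The paper's construction avoids both issues. When in $\xi$ it is Player~$1$'s phase to ``quit'' with $c_1^2$ for $M_t$ stages at rate $\alpha_t$, the paper replaces this by $\widehat M_t\gg M_t$ stages in which Player~$1$ still mixes $c_1^2$ with small probability while Player~$2$ \emph{mixes} $c_2^2$ with probability at least $\ep/2$; the parameters are tuned so that the probability of hitting $a^4$ in $\Gamma$ over the block equals the probability of absorbing at $a^3$ in $\Gamma'$ over the original block, and the absorbing payoff $u(a^4)$ is identical. Player~$1$'s incentive to play $c_1^2$ is inherited from $\xi$ (the absorbing payoff is the same), while Player~$2$'s compliance is enforced by \emph{statistical monitoring} over the long block: any shift of more than $\ep$ in Player~$2$'s frequency of $c_2^2$ is detected and punished, and shifts smaller than $\ep$ change payoffs by at most $\ep$. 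This is the missing mechanism in your proposal; a pure ``play $a^4$ now'' instruction cannot substitute for it.
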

\begin{proof}
	Fix $\ep > 0$.
	By Lemma \ref{lemma:Q sunspot},
	there are $\delta_1,\delta_2 \in [0,1]$ such that
	$\max \left\{ \delta_1, \delta_2 \right\} > 0$
	and the auxiliary game $\Gamma_{\delta_1,\delta_2} = (I, (C_i)_{i \in I}, (Q_i)_{i \in I}, P^{\delta_1, \delta_2}, u^{\delta_1, \delta_2})$
	admits a sunspot $\frac{\ep}{2}$-equilibrium $\xi$.
	Assume without lost of generality that $\delta_1>0$.
	The strategy profile $\xi$ is determined by
	a continue mixed action profile $c^\xi \in C$,
	such that at each stage of $\xi$,
	at most one player $i$ quits with positive probability,
	and does so with probability at most $\frac{\ep}{2}$,
	while all other players follow $c_{-i}^\xi$.
	
	More formally, $\xi$ has the following structure:
	Let $c^{\xi}$ be the continue mixed action profile from Lemma \ref{lemma:Q sunspot}.
	The continue mixed action of Player 2 in this action profile is $c^{\xi}_2 \coloneqq pc_2^1 + (1-p)c_2^2$, where if $\delta_2 >0$ then $p=0$. For every $t \in \dN$, a player $i_t \in I$ is chosen by the correlation device, alongside a quitting action $q_{i_t}^t \in Q_i$, an integer $M_t \in \dN$, and a deviation $\alpha_t \in (0, \frac{\ep}{2})$.
	For the next $M_t$ stages, player $i_t$ plays $\alpha_t q_{i_t}^t + (1 - \alpha_t) c^{\xi}_{i_t}$, while all other players play $c^{\xi}_{-i_t}$. The auxiliary game $\Gamma_{\delta_1, \delta_2}$ is absorbing during these $M_t$ stages with a probability of 
	$\rho_{\delta_1, \delta_2} (q_{i_t}^t, c^{\xi}, \alpha_t, M_t)
	\coloneqq 1 - (1 - \alpha_t \cdot P^{\delta_1, \delta_2}(q_{i_t}^t, c_{-i_t}^{\xi}))^{M_t}$. If the game is not absorb, then $i_{t+1} \in I$, $q_{i_{t+1}}^{t+1} \in Q_i$, $M_{t+1} \in \dN$, and $\alpha_{t+1} \in (0, \frac{\ep}{2})$ are chosen by the correlation device, and the process is continue.
	
	We will construct a sunspot strategy profile $\widehat{\xi}$ in $\Gamma$ that mimics $\xi$ as follows.
	The idea is that the strategy profile $\widehat{\xi}$ coincides with $\xi$ until it is Player 1's turn to play the quitting action $c_1^2$ with small probability.
	Then, we will expand this turn into many stages, and let Player 2 play $c_2^2$ with at least small probability as well.
	On one hand, Player 2 will play $c_2^2$ with small probability so that no other player can gain more than $\ep$ by deviating. On the other hand, we will repeat this play for many stages so that all the other players can monitor Player 2.
	The probability of Players 1 and 2 to play simultaneity $c_1^2$ and $c_2^2$ in these stages will be equal to the quitting probability of Player 1 quitting with $c_1^2$ in the original equilibrium of the auxiliary game.
	If $\delta_2>0$ as well, the symmetric process takes place when in $\widehat{\xi}$ it is Player 2 turn to quit with action $c_2^2$.
	
	Formally, the sunspot strategy profile $\widehat{\xi}$ has the same structure as $\xi$: For every $t \in \dN$, a player $\widehat{i}_t \in I$ is chosen by the correlation device, alongside a quitting action $q_{\widehat{i}_t}^t \in Q_i$, an integer $\widehat{M}_t \in \dN$, and a deviation $\widehat{\alpha}_t \in (0, \frac{\ep}{2})$.
	Player $\widehat{i}_t$ quits with probability $\widehat{\alpha}_t$ using $q_{\widehat{i}_t}^t$ for $\widehat{M}_t$, while all the other players play a continue action. If the game is not absorb, then $\widehat{i}_{t+1} \in I$, $q_{\widehat{i}_{t+1}}^{t+1} \in Q_i$, $\widehat{M}_{t+1} \in \dN$, and $\widehat{\alpha}_{t+1} \in (0, \frac{\ep}{2})$ are chosen by the correlation device, and the process is continue.
	Denote by $\rho (q_i, c, \alpha, M)
	\coloneqq 1 - (1 - \alpha \cdot P(q_i, c_{-i}))^{M}$
	the probability of the game $\Gamma$ to absorb when player $i$ plays the quitting action $q_i$ with probability $\alpha$ and the rest of the players play $c_{-i}$ for $M$ stages.
	Note that if $q_i \ne c_1^2, c_2^2$
	then $\rho (q_i, c_, \alpha, M) = \rho_{\delta_1, \delta_2} (q_i, c, \alpha, M)$,
	and that the payoff when this absorption occurs is equal,
	that is $u(q_i, c_{-2}) = u_{\delta_1, \delta_2}(q_i, c_{-2})$.
	
	The fashion of choosing $\widehat{i}_t$ and $q_{\widehat{i}_t}^t$ in $\widehat{\xi}$ is identical to the fashion of choosing $i_t$ and $q_{i_t}^t$ in $\xi$.
	If $q_{\widehat{i}_t}^t \ne c_1^2, c_2^2$, then $\widehat{M}_t$ and $\widehat{\alpha}_t$ are evaluate from $\widehat{i}_t$, $q_{\widehat{i}_t}^t$, and the correlation device that same way
	$M_t$ and $\alpha_t$ are evaluate from $i_t$, $q_{i_t}^t$, and the correlation device.
	
	We now construct $\widehat{M}_t$ and $\widehat{\alpha}_t$ out of $M_t$ and $\alpha_t$ when $q_{\widehat{i}_t}^t = c_1^2$.
	Let $p = max\left\{ c^{\xi}_2(c_2^2), \frac{\ep}{2} \right\}$, $\widehat{c}_2 \coloneqq pc_2^1 + (1-p)c_2^2$,
	and $\widehat{c}^{\xi, 2} \coloneqq (\widehat{c}_2, c^{\xi}_{-2})$.
	The function $\rho (c_1^2, \widehat{c}^{\xi, 2}, \alpha_t, M)$ is increasing on $M$.
	Then, there is an integer $\widehat{M}_t \ge M_t$
	such that $\widehat{M}_t$ is large enough to monitor between $\widehat{c}^{\xi, 2}_2$ and $(1-\ep)pc_2^1 + ((1-p) + \ep p) c_2^2$,
	and $\rho (c_1^2, \widehat{c}^{\xi, 2}, \alpha_t, \widehat{M}_t) \ge
	\rho_{\delta_1, \delta_2} (c_1^2, c^\xi, \alpha_t, M_t)$.
	$\rho (c_1^2, \widehat{c}^{\xi, 2}, \alpha, \widehat{M}_t)$ is decreasing on $\alpha$.
	Then, there is $0 < \widehat{\alpha}_t \le \alpha_t$, such that
	$\rho (c_1^2, \widehat{c}^{\xi, 2}, \widehat{\alpha}_t, \widehat{M}_t) =
	\rho_{\delta_1, \delta_2} (c_1^2, c^\xi, \alpha_t, M_t)$.
	Note that if $\widehat{\xi}$ is terminate after choosing quitting action $c_1^2$, then the payoff is $u(a^4) = u^{\delta_1, \delta_2}(a^3)$, which is the payoff if $\xi$ is terminate after choosing quitting action $c_1^2$.
	Therefore, if the correlation device choose quitting action $c_1^2$ at time $t$, the the sunspot strategy profiles $\xi$ and $\widehat{\xi}$ terminates at the same probability before time $t+1$, and yields the same payoff if they terminate.
	We repeat this calculation for quitting action $c_2^2$, if needed.
	
	Thus, we created a process given by $\widehat{\xi}$, which is similar to the process given by $\xi$ in the following way - 
	For every $t \in dN$, the probability of the process to be terminate after $t$ stages, is equal. Moreover, if the processes are terminated after time $t$, the payoff they yield are equal.
	For every player $i \ge 3$, the only new deviations are given due the change between $c^\xi$ to $\widehat{c}^{\xi, 2}$ and $\widehat{c}^{\xi, 1}$.
	But, $\abs{c^\xi - \widehat{c}^{\xi, 2}}, \abs{c^\xi - \widehat{c}^{\xi, 1}} \le \frac{\ep}{2}$, then any deviation of a player in $\widehat{\xi}$ is bounded by $\frac{\ep}{2} + \frac{\ep}{2}$.
	For Players $1$ and $2$, the only new deviation is not playing $\widehat{c}^{\xi, 2}_2$ and $\widehat{c}^{\xi, 1}_1$ respectively when needed. But, they are being monitored for lowering the probability for termination of the game by more then $\ep$, while they have no incentive to boost this probability (since they can quit in $\xi$ as well). Therefore, their deviation has no more then $\ep$ influence.
	Therefore, $\widehat{\xi}$ is a uniform sunspot $\ep$-equilibrium.
\end{proof}

\subsection{Additional Family of Auxiliary Games}
In this section we define a second family of auxiliary games,
which are similar to the auxiliary games $\Gamma^{\delta_1, \delta_2}$,
with an additional property that one of the players is restricted in the mixed action he can play.
\begin{definition}
	\label{def:auxiliary NQL}
	Let $\delta > 0$ and let $\alpha \in [0,1]$.
	Let $\Gamma = (I, (C_i)_{i \in I}, (Q_i)_{i \in I}, P, u)$ be an L-shaped game,
	and	let $\Gamma^{\delta, 0} = (I, (C_i)_{i \in I}, (Q_i)_{i \in I}, P^{\delta, 0}, u^{\delta, 0})$ be an auxiliary game of $\Gamma$.
	The auxiliary game $\Gamma^{\delta, 0}_\alpha$ is defined similarly to $\Gamma^{\delta, 0}$ with the following change:
	Player 2 cannot play the action $c_2^2$ with probability greater then $\alpha$.
\end{definition}
The auxiliary game $\Gamma^{0, \delta}_\alpha$ is defined
analogously to the auxiliary game $\Gamma^{\delta, 0}_\alpha$,
with Player 1 and action $c_1^2$ replacing the role of Player 2 and action $c_2^2$.
Both games $\Gamma^{0, \delta}_0$ and $\Gamma^{\delta, 0}_0$ are quitting games if $\delta > 0$, and general quitting games for every $\delta \ge 0$.

\begin{definition}
	\label{definition:chi}
	Let $\Gamma = (I, (C_i)_{i \in I}, (Q_i)_{i \in I}, P, u)$ be an absorbing game.
	Let $a \in A$ be an action profile and let $x \in \times_{i \in I} \Delta(A_i)$ be a mixed action profile.
	The \emph{per-stage probability of absorption by action profile $a$ under $x$ in $\Gamma$} is denoted by
	$$\chi(a,x) \coloneqq x(a) \cdot P(a) \period$$
	For every subset of action profiles $A' \subseteq A$, denote
	the \emph{per-stage probability of absorption by $A'$ under $x$ in $\Gamma$} by
	$$\chi(A', x) \coloneqq \sum_{a \in A'}\chi(a,x) \period $$
	For every mixed action profile $x$ with $P(x)>0$, the \emph{undiscounted payoff} under $x$ in $\Gamma$ is denote by
	$$\gamma(x) \coloneqq \sum_{a \in A} \dfrac{\chi(a,x)}{P(x)} u(a) \comma$$
\end{definition}
where $P(x) = \sum_{a \in A}\chi(a,x) = \chi(A, x)$
is the per-stage probability of absorption under $x$ in $\Gamma$.\\

The following lemma provides a condition for the existence of an almost stationary uniform $\ep$-equilibrium.
\begin{lemma}
	\label{lemma:alpha equilibrium}
	Let $\Gamma$ be a generic L-shaped game.
	For every $\ep > 0$ there exist $\delta_\ep, c_\ep > 0$,
	such that if $\delta < \delta_\ep$, $\alpha \in [0,1]$,
	and the mixed action $x$ is a stationary equilibrium of $\Gamma^{\delta, 0}_\alpha$
	that satisfies $0 < P^{1,0}(x) < c_\ep$,
	then $\Gamma$ admits an almost stationary uniform $\ep$-equilibrium,
\end{lemma}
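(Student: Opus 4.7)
The plan is to convert the stationary equilibrium $x$ of the auxiliary game $\Gamma^{\delta,0}_\alpha$ into an almost stationary profile $\widehat{x}$ in the original game $\Gamma$, using the fact that $\Gamma$ and $\Gamma^{\delta,0}$ differ only at the action profile $a^3$: in the auxiliary game $a^3$ absorbs with probability $\delta$ and pays $u(a^4)$, whereas in $\Gamma$ it is non-absorbing. The idea is to reroute this ``virtual'' absorption at $a^3$ into genuine absorption at the adjacent profile $a^4$, which is already absorbing in $\Gamma$ and whose absorbing payoff is also $u(a^4)$, via a small shift of Player~2's mixed continue action.

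Concretely, I would set $\widehat{x}_i = x_i$ for every $i\ne 2$ and modify Player~2 by moving the weight
\[
\beta \;:=\; \frac{\delta\, x_2(c_2^1)}{P(a^4)} \;=\; O(\delta)
\]
from $c_2^1$ to $c_2^2$, so that the per-stage absorption at $a^4$ under $\widehat{x}$ in $\Gamma$ equals the sum of the per-stage absorptions at $a^3$ and $a^4$ under $x$ in $\Gamma^{\delta,0}$. Since both of these sources pay $u(a^4)$, the conditional distribution over absorbing profiles under $\widehat{x}$ in $\Gamma$ agrees with that of $x$ in $\Gamma^{\delta,0}$ up to an $O(\delta)$-perturbation of the quitting profiles that involve Player~2's continue actions. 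Consequently $\abs{\gamma_i(\widehat{x};\Gamma) - \gamma_i(x;\Gamma^{\delta,0})} = O(\delta)$ for every player~$i$, which can be made at most $\ep/3$ by taking $\delta_\ep$ small.

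Verifying the equilibrium property splits naturally by the identity of the deviator. For Players $i\ne 2$, the opposing profile $\widehat{x}_{-i}$ differs from $x_{-i}$ only by the $\beta$-shift in Player~2's coordinate, so any deviation payoff in $\Gamma$ differs from the corresponding deviation payoff in $\Gamma^{\delta,0}$ by $O(\delta)$, and the equilibrium inequalities of $x$ in $\Gamma^{\delta,0}_\alpha$ transfer to $\Gamma$ up to this additive error. The delicate case is Player~2 herself, because her incentive to play $c_2^1$ differs between the two games: in $\Gamma^{\delta,0}$ playing $c_2^1$ generates an absorbing flow via $a^3$ worth $u_2(a^4)$, while in $\Gamma$ this flow disappears and the only route to trigger $a^4$-absorption from Player~2's side is via $c_2^2$. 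I expect this to be the main obstacle.

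My plan for Player~2 is to separate small and large deviations. For mixed-action deviations close to $\widehat{x}_2$, the choice of $\beta$ is calibrated precisely so that the marginal effect on Player~2's absorbing payoff of shifting weight between $c_2^1$ and $c_2^2$ in $\Gamma$ matches, up to $O(\delta)$, the analogous marginal effect in $\Gamma^{\delta,0}$, where $x$ is already optimal subject to $x_2(c_2^2)\le\alpha$. For deviations that push $x_2(c_2^2)$ substantially beyond $\widehat{x}_2(c_2^2)$ or that use a quitting action intensively, I would rely on the almost stationary structure: the smallness hypothesis $P^{1,0}(x)<c_\ep$ ensures that equilibrium play absorbs slowly, so there is ample time to monitor Player~2's empirical action frequencies over a long block and, upon detection of a deviation, trigger the min-max punishment. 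By Lemma~\ref{lemma:minmax}, the value of this punishment in $\Gamma$ exceeds the corresponding value in the auxiliary game by at most $\ep$, which together with the $O(\delta)$ payoff discrepancy bounds Player~2's post-deviation payoff by $\gamma_2(\widehat{x};\Gamma)+\ep$. Choosing $\delta_\ep$ and $c_\ep$ small enough so that all error terms aggregate to at most $\ep$ completes the construction of the desired almost stationary uniform $\ep$-equilibrium.
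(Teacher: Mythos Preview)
Your proposal follows essentially the same route as the paper: shift Player~2's weight from $c_2^1$ to $c_2^2$ so that the virtual absorption at $a^3$ in $\Gamma^{\delta,0}$ is rerouted to genuine absorption at $a^4$ in $\Gamma$, transfer the equilibrium inequalities for players $i\ne 2$ via an $O(\delta)$ perturbation argument, and handle Player~2 by statistical monitoring plus min-max punishment. The paper's proof executes exactly this plan (its $\widehat{x}^\delta$ is your $\widehat{x}$ up to the harmless factor $P(a^4)$ in the shift size).

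Two technical points you gloss over that the paper treats explicitly. First, the paper inserts an additional uniform rescaling step (its $\widehat{x}^{\delta,\eta}$): all ``quitting'' probabilities are multiplied by $\eta/x_{\max}$ so that the per-stage absorption can be pushed below any prescribed threshold, independently of $c_\ep$. This is needed because the precision required of the statistical test on Player~2 is of order $\delta\cdot\ep$, and $c_\ep$ is fixed before $\delta$; your sentence ``$P^{1,0}(x)<c_\ep$ ensures that equilibrium play absorbs slowly'' is not by itself enough to guarantee the monitoring works at that precision. Second, your invocation of Lemma~\ref{lemma:minmax} for the punishment bound is not the mechanism the paper uses and does not quite close the loop: that lemma compares $\overline v_2(\Gamma^{\delta_1,\delta_2})$ with $\overline v_2(\Gamma)$, but what you need is $\overline v_2(\Gamma)\le \gamma_2(\widehat{x})+O(\ep)$, and $x$ is only an equilibrium of the \emph{constrained} game $\Gamma^{\delta,0}_\alpha$, so $\gamma_2(x)$ need not dominate $\overline v_2(\Gamma^{\delta,0})$. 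The paper instead argues directly: since every quitting action $q_2$ is at most $\ep$-profitable against $\widehat{x}_{-2}$, and $\widehat{x}_{-2}$ is within $c_\ep$ of the pure continue profile $C_{-2}$, one gets $\gamma_2(q_2,C_{-2})\le \gamma_2(\widehat{x})+3\ep$ for all $q_2$, whence $\overline v_2(\Gamma)\le \gamma_2(\widehat{x})+3\ep$ because opponents can simply play $C_{-2}$. With these two fixes your outline becomes the paper's proof.
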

\begin{proof}
	Let $\Gamma = (I, (C_i)_{i \in I}, (Q_i)_{i \in I}, P, u)$ be a generic L-shaped game.
	Fix $\ep > 0$. We will prove that $\Gamma$ admits a almost stationary uniform $18 \ep$-equilibrium.\\
	
	\textbf{Step 1: Notations}\\
	For every $\delta > 0$,
	let $\Gamma^{\delta, 0} = (I, (C_i)_{i \in I}, (Q_i)_{i \in I}, P^{\delta, 0}, u^{\delta, 0})$ be its auxiliary game.
	Denote by $\gamma^{\delta, 0}(x)$ the undiscounted payoff of this auxiliary game.
	In this proof, we will refer $c_1^2$ as a quitting action.
	That is, the quitting actions in the game are
	\[ Q'_i \coloneqq
	\begin{dcases*}
	Q_1 \cup \left\{c_1^2\right\} & $i = 1$,\\
	Q_i & $i \ge 2$,
	\end{dcases*}
	\]
	and the continue actions are
	\[ C'_i \coloneqq
	\begin{dcases*}
	\left\{c_1^1\right\} & $i = 1$,\\
	\left\{c_2^1, c_2^2\right\} & $i = 2$,\\
	\left\{c_3\right\} & $i \ge 3$.
	\end{dcases*}
	\]
	Thus, Player 2 is the only player to have two continue actions,
	while all other players have a single continue action.
	Denote all action profiles that contain a single quitting action by
	$$A^1 \coloneqq \left\{a \in A \mid
	\exists i \in I. a_i \in Q'_i, a_{-i} \in \times_{j \ne i}C'_j \right\} \period$$
	Denote $\widetilde{A}^1 \coloneqq A^1 \setminus \left\{a^3, a^4 \right\}$,
	and denote all action profiles that contain more then a single quitting action by
	$$A^{>1} \coloneqq A \setminus \left( A^1 \cup \left\{a^1, a^2 \right\} \right) \period$$
	Since $0 < P^{1,0}(x)$, it follows that $0 < P^{\delta,0}(x)$, and $x$ is absorbing in $\Gamma^{\delta, 0}$.
	Therefore,
	\begin{eqnarray*}
		\gamma^{\delta, 0}(x) &=&
		\sum_{a \in A^1} \dfrac{\chi^{\delta,0}(a,x)}{P^{\delta,0}(x)} u^{\delta,0}(a) +
		\sum_{a \in A^{>1}} \dfrac{\chi^{\delta,0}(a,x)}{P^{\delta,0}(x)} u^{\delta,0}(a) \\
		&=& \dfrac{\chi^{\delta,0}(a^3,x)}{P^{\delta,0}(x)} u^{\delta,0}(a^3) +
		\sum_{a \in A^1 \setminus \left\{a^3\right\}} \dfrac{\chi^{\delta,0}(a,x)}{P^{\delta,0}(x)} u^{\delta,0}(a) +
		\sum_{a \in A^{>1}} \dfrac{\chi^{\delta,0}(a,x)}{P^{\delta,0}(x)} u^{\delta,0}(a) \\
		&=& \dfrac{\chi^{\delta,0}(\left\{a^3, a^4\right\},x)}{P^{\delta,0}(x)} u(a^4) +
		\sum_{a \in \widetilde{A}^1} \dfrac{\chi(a,x)}{P^{\delta,0}(x)} u(a) +
		\sum_{a \in A^{>1}} \dfrac{\chi(a,x)}{P^{\delta,0}(x)} u(a) \period\\
	\end{eqnarray*}
	Define $p_{min} \coloneqq \min \left\{P(a) \mid a \in A, a \ne a^1, a^2, a^3 \right\}$.
	Since by assumption $P^{\delta,0}(x) < c_\ep$,
	it follows that for every player $i \in I$ and every quitting action $q_i \in Q'_i$, we have $x_i(q_i) < \frac{c_\ep}{p_{min}}$.
	We will define a mixed action profile such that it is an almost stationary uniform $18\ep$-equilibrium in $\Gamma$.\\
	
	\textbf{Step 2: Influence of the action profiles in $\bm{A^{>1}}$ on the undiscounted payoff is negligible}\\
	First, we prove that there is $c'_\ep$
	such that for every mixed action profile $y$,
	if $y_i(q_i) < c'_\ep$ for every quitting action $q_i$ of player $i$,
	for every $i \in I$,
	then $\dfrac{\chi(A^{>1},y)}{P(y)} < \ep$.\\
	Note that $1 - P(y) \ge (1 - c'_\ep)^{\abs{Q'}}$.
	Let $q_i \in Q'_i$ be a quitting action of player $i$,
	such that $q_i \ne c_1^2$.
	We can bound from below the probability of player $i$ to quit alone using $q_i$ by
	$$\chi(\left\{(q_i, a^1_{-i}), (q_i, a^2_{-i})\right\}, y) \ge
	y(q_i) \cdot (1 - c'_\ep)^{\abs{Q'}} \cdot p_{min} \period$$
	Denote the quitting action profile that includes player $i$ quitting with $q_i$ while other players quit as well by
	$$A(q_i) \coloneqq \left\{a \in A \mid
	\exists j \ne i. a_j \in Q'_j, a_i = q_i \right\} \period$$
	We can bound from above the probability of player $i$ to using $q_i$ alongside other players quitting by
	$$\chi(A(q_i), y) \le y(q_i) \cdot \frac{c'_\ep}{p_{min}} \cdot \abs{A} \period$$
	Therefore, if
	$c'_\ep \le \min \left\{1 - \dfrac{1}{\sqrt[\abs{Q'}]{2}},
	\dfrac{p_{min}^2 \cdot \ep}{2 \abs{A}}\right\}$,
	it follows that
	$$\chi(\left\{(q_i, a^1_{-i}), (q_i, a^2_{-i})\right\}, y) \cdot \ep \ge
	\chi(A(q_i), y) \period$$
	Since it is true for every $q_i \ne c_1^2 \in Q_i$, 
	we deduce $\chi(A^{>1},y) < \ep \cdot P(y)$, as we wanted.
	
	We can now approximate the undiscounted payoff of a mixed action profile with a small per-stage probability of absorbing.
	Let $\widetilde{x}$ be a strategy profile that satisfies
	$$P(\widetilde{x}) \le \min \left\{p_{min} \left( 1 - \dfrac{1}{\sqrt[\abs{Q'}]{2}} \right),
	\dfrac{p_{min}^3 \cdot \ep}{2 \abs{A}}\right\} \comma $$
	then $\chi(A^{>1},\widetilde{x}) < \ep \cdot P(\widetilde{x})$.
	We can conclude these two boundaries of $\gamma(\widetilde{x})$.
	The first is an upper bound,
	\begin{eqnarray}
	\gamma(\widetilde{x}) &=& \dfrac{\chi(\left\{a^3, a^4\right\},\widetilde{x})}
	{P(\widetilde{x})} u(a^4) +
	\sum_{a \in \widetilde{A}^1} \dfrac{\chi(a,\widetilde{x})}
	{P(\widetilde{x})} u(a)
	+ \sum_{a \in A^{>1}} \dfrac{\chi(a,\widetilde{x})}
	{P(\widetilde{x})} u(a)
	\nonumber \\
	&\le& \dfrac{\chi(\left\{a^3, a^4\right\},\widetilde{x})}
	{P(\widetilde{x})} u(a^4) +
	\sum_{a \in \widetilde{A}^1} \dfrac{\chi(a,\widetilde{x})}
	{P(\widetilde{x})} u(a)
	+\ep \nonumber \\
	&\le& \dfrac{\chi(\left\{a^3, a^4\right\},\widetilde{x})}
	{\chi(A^1,\widetilde{x})} u(a^4) +
	\sum_{a \in \widetilde{A}^1} \dfrac{\chi(a,\widetilde{x})}
	{\chi(A^1,\widetilde{x})} u(a)
	+ \ep	\label{eq:step2_le} \comma
	\end{eqnarray}
	and the second is a lower bound,
	\begin{eqnarray}
	\gamma(\widetilde{x}) &=& \dfrac{\chi(\left\{a^3, a^4\right\},\widetilde{x})}
	{P(\widetilde{x})} u(a^4) +
	\sum_{a \in \widetilde{A}^1} \dfrac{\chi(a,\widetilde{x})}
	{P(\widetilde{x})} u(a)
	+ \sum_{a \in A^{>1}} \dfrac{\chi(a,\widetilde{x})}
	{P(\widetilde{x})} u(a)
	\nonumber \\
	&\ge& \dfrac{\chi(\left\{a^3, a^4\right\},\widetilde{x})}
	{P(\widetilde{x})} u(a^4) +
	\sum_{a \in \widetilde{A}^1} \dfrac{\chi(a,\widetilde{x})}
	{P(\widetilde{x})} u(a)
	\nonumber \\
	&\ge& \dfrac{\chi(\left\{a^3, a^4\right\},\widetilde{x})}
	{\chi(A^1,\widetilde{x})} u(a^4) +
	\sum_{a \in \widetilde{A}^1} \dfrac{\chi(a,\widetilde{x})}
	{\chi(A^1,\widetilde{x})} u(a)
	- \ep \period \label{eq:step2_ge}
	\end{eqnarray}
	
	\textbf{Step 3: Variant of Step 2 for an action deviation}\\
	We will prove a similar estimate as in Step 2, for a different mixed action profile:
	Let $i \in I$ be a player, and let $q_i \in Q_i$ be a quitting action of player $i$.
	Let $y$ be a mixed action profile such that
	$y(q') < c'_\ep$ for every $q' \ne q_i \in Q$.
	We repeat the process as in Step 2.
	We know that
	$\chi(\left\{(q_i, a^1_{-i}), (q_i, a^2_{-i})\right\}, y) \ge
	y(q_i) \cdot (1 - c'_\ep)^{\abs{Q'}} \cdot p_{min}$ and
	$\chi(A(q_i), y) \le y(q_i) \cdot \frac{c'_\ep}{p_{min}} \cdot \abs{A}$.
	If
	$c'_\ep \le \min \left\{1 - \dfrac{1}{\sqrt[\abs{Q'}]{2}},
	\dfrac{p_{min}^2 \cdot \ep}{2 \abs{A}}\right\}$ we deduce that 
	\begin{equation}
	\label{eq:step3_1}
	\chi(\left\{(q_i, a^1_{-i}), (q_i, a^2_{-i})\right\}, y) \cdot \ep \ge
	\chi(A(q_i), y) \period
	\end{equation}
	Denote the quitting action profile that includes player $j$ quitting with $q_j$ while other players quit as well, but player $i$ does not quit using $q_i$, by
	$$A(q_j ; q_i) \coloneqq \left\{a \in A \mid
	\exists k \ne j. a_k \in Q'_k, a_j = q_j, a_i \ne q_i \right\} \period$$
	If
	$c'_\ep \le \min \left\{1 - \dfrac{1}{\sqrt[\abs{Q'}]{2}},
	\dfrac{p_{min}^2 \cdot \ep}{2 \abs{A}}\right\}$,
	we deduce that
	\begin{equation}
	\label{eq:step3_2}
	\chi(\left\{(q_i, a^1_{-i}), (q_i, a^2_{-i})\right\}, y) \cdot \ep \ge
	\chi(A(q_i), y)
	\end{equation}
	From both Eqs. (\ref{eq:step3_1}) and (\ref{eq:step3_2})
	we can deduce again that if
	$c'_\ep < \min \left\{1 - \dfrac{1}{\sqrt[\abs{Q'}]{2}},
	\dfrac{p_{min}^2 \cdot \ep}{2 \abs{A}}\right\}$
	then 
	$$\chi(A^{>1},y) < \ep \cdot P(y) \period$$
	Hence, if $\widetilde{x}$ is a mixed action profile
	such that
	$P(\widetilde{x}) < \min \left\{p_{min} \left( 1 - \dfrac{1}{\sqrt[\abs{Q'}]{2}} \right),
	\dfrac{p_{min}^3 \cdot \ep}{2 \abs{A}}\right\}$,
	and $q_i \in Q_i$ is a quitting action of player $i$ such that $q_i \ne c_1^2$,	then
	$$\chi(A^{>1},(q_i, \widetilde{x}_{-i})) < \ep \cdot P(q_i, \widetilde{x}_{-i}) \period$$
	Therefore $$\sum_{a \in A^{>1}} \dfrac{\chi(a,(q_i, \widetilde{x}_{-i}))}
	{P(q_i, \widetilde{x}_{-i})} u(a) < \ep \period$$
	From Step 2, we get that if $c_i \in C_i$ is a continue action, then
	$$\sum_{a \in A^{>1}} \dfrac{\chi(a,(c_i, \widetilde{x}_{-i}))}
	{P(c_i, \widetilde{x}_{-i})} u(a) < \ep \period$$
	We conclude that for every deviation $a_i \in A_i$ such that $a_i \ne c_1^2$,
	we have these two inequalities. The first is an upper bound of $\gamma(a_i, \widetilde{x}_{-i})$
	\begin{eqnarray}
	\gamma(a_i, \widetilde{x}_{-i}) &=&
	\dfrac{\chi(\left\{a^3, a^4\right\},(a_i, \widetilde{x}_{-i}))}
	{P(a_i, \widetilde{x}_{-i})} u(a^4) +
	\sum_{a \in \widetilde{A}^1} \dfrac{\chi(a,(a_i, \widetilde{x}_{-i}))}
	{P(a_i, \widetilde{x}_{-i})} u(a) 
	\label{eq:step3_le} \\
	&&+ \sum_{a \in A^{>1}} \dfrac{\chi(a,(a_i, \widetilde{x}_{-i}))}
	{P(a_i, \widetilde{x}_{-i})} u(a)
	\nonumber \\
	&\le& \dfrac{\chi(\left\{a^3, a^4\right\},(a_i, \widetilde{x}_{-i}))}
	{P(a_i, \widetilde{x}_{-i})} u(a^4) +
	\sum_{a \in \widetilde{A}^1} \dfrac{\chi(a,(a_i, \widetilde{x}_{-i}))}
	{P(a_i, \widetilde{x}_{-i})} u(a) +
	\ep \nonumber \\
	&\le& \dfrac{\chi(\left\{a^3, a^4\right\},(a_i, \widetilde{x}_{-i}))}
	{\chi(A^1,(a_i, \widetilde{x}_{-i}))} u(a^4) +
	\sum_{a \in \widetilde{A}^1} \dfrac{\chi(a,(a_i, \widetilde{x}_{-i}))}
	{\chi(A^1,(a_i,\widetilde{x}_{-i}))} u(a) +
	\ep \nonumber \comma
	\end{eqnarray}
	while the second is a lower bound of $\gamma(a_i, \widetilde{x}_{-i})$
	\begin{eqnarray}
	\label{eq:step3_ge}
	\gamma(a_i, \widetilde{x}_{-i}) &=&
	\dfrac{\chi(\left\{a^3, a^4\right\},(a_i, \widetilde{x}_{-i}))}
	{P(a_i, \widetilde{x}_{-i})} u(a^4) +
	\sum_{a \in \widetilde{A}^1} \dfrac{\chi(a,(a_i, \widetilde{x}_{-i}))}
	{P(a_i, \widetilde{x}_{-i})} u(a) \\
	&&+ \sum_{a \in A^{>1}} \dfrac{\chi(a,(a_i, \widetilde{x}_{-i}))}
	{P(a_i, \widetilde{x}_{-i})} u(a)
	\nonumber \\
	&\ge& \dfrac{\chi(\left\{a^3, a^4\right\},(a_i, \widetilde{x}_{-i}))}
	{P(a_i, \widetilde{x}_{-i})} u(a^4) +
	\sum_{a \in \widetilde{A}^1} \dfrac{\chi(a,(a_i, \widetilde{x}_{-i}))}
	{P(a_i, \widetilde{x}_{-i})} u(a) \nonumber \\
	&\ge& \dfrac{\chi(\left\{a^3, a^4\right\},(a_i, \widetilde{x}_{-i}))}
	{\chi(A^1,(a_i, \widetilde{x}_{-i}))} u(a^4) +
	\sum_{a \in \widetilde{A}^1} \dfrac{\chi(a,(a_i, \widetilde{x}_{-i}))}
	{\chi(A^1,(a_i,\widetilde{x}_{-i}))} u(a) -
	\ep \nonumber
	\end{eqnarray}
	
	\textbf{Step 4: Constructing an auxiliary mixed action profile $\bm{\widehat{x}^\delta}$}\\
	We define a mixed action profile $\widehat{x}^\delta$ as follows,
	\begin{eqnarray*}
		\widehat{x}^\delta_2(c_2^1) &\coloneqq& (1-\delta) \cdot x_2(c_2^1) \comma\\
		\widehat{x}^\delta_2(c_2^2) &\coloneqq& x'_2(c_2^2) + \delta \cdot x_2(c_2^1) \comma\\
		\widehat{x}^\delta_2(q_2) &\coloneqq& x_2(q_2) \quad \text{for every } q_2 \in Q_2 \comma \\
		\widehat{x}^\delta_2(q_2) &\coloneqq& x_2(q_2) \quad \text{for every } i \ne 2,\ a_i \in A_i \period
	\end{eqnarray*}
	In words, in $\widehat{x}^\delta$, Player 2 increases the probability of playing the action $c_2^2$ at the expense of the action $c_2^1$.
	Note that
	$\chi^{\delta, 0}(\left\{a^3, a^4 \right\}, x) = \chi(a^4, \widehat{x}^\delta)$.
	We conclude that if $c_\ep, \delta <
	\min \left\{\dfrac{p_{min}}{2} \left( 1 - \dfrac{1}{\sqrt[\abs{Q'}]{2}} \right),
	\dfrac{p_{min}^3 \cdot \ep}{4 \abs{A}}\right\}$,
	then $P(\widehat{x}^\delta) <
	\min \left\{p_{min} \left( 1 - \dfrac{1}{\sqrt[\abs{Q'}]{2}} \right),
	\dfrac{p_{min}^3 \cdot \ep}{2 \abs{A}}\right\}$.
	Therefore, Eqs. (\ref{eq:step2_le}) and (\ref{eq:step2_ge}) hold for the mixed action profile $\widehat{x}^\delta$,
	and Eqs. (\ref{eq:step3_le}) and (\ref{eq:step3_ge}) hold for every action deviation of $\widehat{x}^\delta$.\\
	Note that for every mixed action profile $y$,
	and every two action profiles $a, a' \in \widetilde{A}^1$
	such that $a_{-2} = a'_{-2}$,  $a_2 = c_2^1$, and $a'_2 = c_2^2$,
	we have
	\begin{equation}
	\left| \dfrac
	{\chi^{\delta, 0}(\left\{a, a'\right\}, y)}
	{\chi(\left\{a, a'\right\}, \widehat{y}^\delta)}
	- 1	\right| \le \delta \period \nonumber
	\end{equation}
	Therefore, we deduce that
	\begin{eqnarray}
	\label{eq:step4}
	\Bigg|&
	\dfrac{\chi(\left\{a^4\right\},\widehat{y}^\delta)}
	{\chi(A^1,\widehat{y}^\delta)} u(a^4) &+
	\sum_{a \in \widetilde{A}^1} \dfrac{\chi(a,\widehat{y}^\delta)}
	{\chi(A^1,\widehat{y}^\delta)} u(a)
	\\
	&- \dfrac{\chi^{\delta, 0}(\left\{a^3, a^4\right\},y)}
	{\chi^{\delta, 0}(A^1,y)} u(a^4) &-
	\sum_{a \in \widetilde{A}^1} \dfrac{\chi^{\delta, 0}(a,y)}
	{\chi^{\delta, 0}(A^1,y)} u(a) \Bigg| \le
	3 \delta \nonumber
	\end{eqnarray}
	
	\textbf{Step 5: Constructing an auxiliary mixed action profile $\bm{\widehat{x}^{\delta, \eta}}$}\\
	Let $\eta > 0$.
	We here define $\widehat{x}^{\delta, \eta}$, a mixed action profile which is a variant of $\widehat{x}^\delta$.
	Denote the maximal probability in which a player plays a quitting action under $x$ by
	$x_{max} \coloneqq \max \left\{x_i(q_i) \mid i \in I, q_i \in Q'_i \right\}$.
	We will define action profile which presents the relative weight of playing the quitting actions in $\cup_{i \in I} Q_i$, while ensuring that each quitting action is played with probability at most $\eta$.\\
	If $\eta \ge x_{max}$, then define
	$\widehat{x}^{\delta, \eta} \coloneqq \widehat{x}^\delta$.
	Otherwise, for every $i \in I$ such that $i \ne 2$, define
	\[
	\widehat{x}^{\delta, \eta}_i(a_i) \coloneqq
	\begin{dcases*}
	\frac{\eta}{x_{max}} \cdot \widehat{x}^\delta_i(a_i)
	& if $a_i \in Q'_i$,\\
	1 - \sum_{q_i \in Q'_i} \widehat{x}^{\delta, \eta}_i(q_i)
	& if $a_i \in C'_i$.
	\end{dcases*}
	\]
	For every quitting action $q_2 \in Q'_2$ of Player 2, define
	$$\widehat{x}^{\delta, \eta}_2(q_2) \coloneqq
	\frac{\eta}{x_{max}} \cdot \widehat{x}^\delta_2(q_2) \comma$$
	and for the continue actions $c_2^1$ and $c_2^2$,
	define $\widehat{x}^{\delta, \eta}_2$
	to satisfy the following two equations:
	\begin{eqnarray}
	\widehat{x}^{\delta, \eta}_2(c_2^1) + \widehat{x}^{\delta, \eta}_2(c_2^2) &=&
	1 - \sum_{q_2 \in Q'_2} \widehat{x}^{\delta, \eta}_2(q_2)
	\comma \label{eq:step5_1}\\
	\dfrac{\widehat{x}^{\delta, \eta}_2(c_2^1)}
	{\widehat{x}^{\delta, \eta}_2(c_2^1) + \widehat{x}^{\delta, \eta}_2(c_2^2)} &=&
	\dfrac{\widehat{x}^\delta_2(c_2^1)}
	{\widehat{x}^\delta_2(c_2^1) + \widehat{x}^\delta_2(c_2^2)}
	\period \label{eq:step5_2}
	\end{eqnarray}
	Eq. (\ref{eq:step5_1}) ensures that $\widehat{x}^{\delta, \eta}_2$ is a mixed action profile,
	while Eq. (\ref{eq:step5_2}) ensures that while Player 2 plays continue actions,
	the ratio of the probabilities to play the continue actions $c_2^1$ and $c_2^2$ under $\widehat{x}^\delta$ and under $\widehat{x}^{\delta, \eta}$ are the same.
	For every $\eta > 0$,
	every mixed action profile $y$,
	and every auxiliary mixed action profile $y^\eta$,
	we have
	\begin{eqnarray}
	&\dfrac{\chi(\left\{a^3, a^4\right\},y^\eta)}
	{\chi(A^1,y^\eta)} u(a^4) +
	\sum_{a \in \widetilde{A}^1} \dfrac{\chi(a,y^\eta)}
	{\chi(A^1,y^\eta)} u(a)
	=& \nonumber\\
	&=\dfrac{\chi(\left\{a^3, a^4\right\},y)}
	{\chi(A^1,y)} u(a^4) +
	\sum_{a \in \widetilde{A}^1} \dfrac{\chi(a,y)}
	{\chi(A^1,y)} u(a)& \label{eq:step5}
	\end{eqnarray}
	Note that if $c_\ep, \delta <
	\min \left\{\dfrac{p_{min}}{2} \left( 1 - \dfrac{1}{\sqrt[\abs{Q'}]{2}} \right),
	\dfrac{p_{min}^3 \cdot \ep}{4 \abs{A}}\right\}$,
	then Eqs. (\ref{eq:step2_le}), (\ref{eq:step2_ge}), (\ref{eq:step3_le}), (\ref{eq:step3_ge}), and (\ref{eq:step4}) hold for $\widehat{x}^{\delta, \eta}$.\\
	
	\textbf{Step 6: Statistical tests}\\
	We constructed a stationary strategy profile that is absorbing at every stage with low probability.
	Under this stationary strategy profile,
	Player 2 plays both actions $c_2^1$ and $c_2^2$ with positive probability,
	and he may profit by increasing the frequency in which he plays one of the actions at the expense of the other.
	To deter such detections,
	we add statistical tests to the strategy profile.\\
	If all players play non-absorbing stationary mixed action,
	they can verify whether a deviation by one of the players occur,
	using statistical test on the realized strategies.
	By \citet[Section 5.3]{solan_three-player},
	if stationary mixed action profile absorbs with a small probability in each turn,
	then this statistical test can still be uphold.
	That is, for our case,
	for stationary mixed action and every $\ep' > 0$,
	there is $\eta(\ep') > 0$ such that if the game absorb with probability smaller then $\eta(\ep')$ in each turn,
	then the players determine if player $i$ play the continue mixture $y_i$,
	or deviate from it by more then $\ep'$.
	Formally, for every $\ep' > 0 $, there is an integer $T_{\ep'} \in \dN$,
	such that if $\left\{x_m\right\}_{m \in \dN}$ are i.i.d. Bernoulli random variables with parameter $x$,
	then their average, after at least time $T_{\ep'}$,
	from $x$ is small enough.
	That is,
	$$\mathcal{P} \left( \left| \dfrac{\sum_{m=1}^{T} x_m}{T}
	- x \right| \ge \ep' \;
	\text{for some} \;
	T \ge T_{\ep'}
	\right) \le \ep' \period$$
	Let $\eta(\delta \cdot \ep) > 0$ be the constant related to $\delta \cdot \ep$,
	and set $\eta' \coloneqq \dfrac{\eta(\delta \cdot \ep)}{\abs{A}}$.
	Therefore, the players can identifying a deviation of Player 2 from $\widehat{x}^{\delta, \eta'}$, in scale of $\delta \cdot \ep$.\\
	Let $\eta \le \eta'$.
	Define by $\sigma^{\delta, \eta}$ a strategy similar to $\widehat{x}^{\delta, \eta}$,
	with the addition of statistical test by Players $1,3,4,\dots,\abs{I}$ whether the realized actions of Player 2 up to time $T$ are close to $\widehat{x}^{\delta, \eta}_2$, for every large enough $T$.
	We will show that
	if every player $i \in I$ cannot gain more then $\ep$ by deviating from
	$\sigma^{\delta, \eta}$
	to action $a_i \in A_i$ such then $a_i \notin C'_2$,
	then Player 2 cannot gain more then $2 \ep$ by changing the distribution of her continue actions.
	We assumed that every quitting action $q_2 \in Q_2$ is less than an $\ep$-efficient deviation against
	$\widehat{x}^{\delta, \eta}_{-2}$.
	Let $C_{-2} \coloneqq c_1^1 \times_{i \ge 3} c_i$ be the continue action profile of all players but Player 2.
	Since $P(\widehat{x}^{\delta, \eta}) < c_\ep$, we have
	\begin{eqnarray*}
		\gamma_2(\sigma^{\delta, \eta}) + \ep
		&\ge& \gamma_2(q_2, \widehat{x}^{\delta, \eta}_{-2}) \\
		&\ge& (1 - c_\ep) \cdot \gamma_2(q_2, C_{-2}) + c_\ep \cdot 0 \period
	\end{eqnarray*}
	Hence, for every $q_2 \in Q_2$,
	$$\gamma_2(q_2, C_{-2}) \le
	\frac{1}{1 - c_\ep} \gamma_2(\sigma^{\delta, \eta}) + \frac{\ep}{1 - c_\ep} \comma$$
	and therefore, if $c_\ep < \frac{\ep}{1+\ep}$ then
	\begin{equation*}
	\gamma(q_2, C_{-2}) \le
	\gamma_2(\sigma^{\delta, \eta}) + 3 \ep \period
	\end{equation*}
	Since $C_{-2}$ is a possible strategy of Players $1, 3, \dots, \abs{I}$,
	the min-max value of Player 2 is bounded by her best response to $C_{-2}$.
	The game $\Gamma$ is positive and recursive, and therefore
	\begin{equation*}
	\overline{v}_2(\Gamma) \le
	\max \left\{0,
	\gamma_2(\widehat{x}^{\delta, \eta}) + 3 \ep \right\}
	= \gamma_2(\sigma^{\delta, \eta}) + 3 \ep \period
	\end{equation*}
	We conclude that if $\eta < \eta'$, $c_\ep < \frac{\ep}{1+\ep}$,
	and every player $i \in I$ cannot gain more then $\ep$ by deviating from
	$\sigma^{\delta, \eta}$
	to action $a_i \in A_i$ such that $a_i \notin C'_2$,
	than Player 2 cannot gain more then $3\ep$ by deviating from $\sigma^{\delta, \eta}$,
	and it is a $3\ep$-equilibrium.
	Note that if $\kappa \in [1,\infty)$ is a positive constant,
	and every player $i \in I$ cannot gain more then $\kappa \ep$ by deviating from
	$\sigma^{\delta, \eta}$
	to action $a_i \in A_i$ such that $a_i \notin C'_2$,
	then $c_\ep < \frac{\ep}{1+\ep}$ implies that
	Player 2 cannot gain more then $3 \kappa \ep$ by deviating from $\sigma^{\delta, \eta}$,
	since $\frac{\ep}{1+\ep} \le \frac{\kappa \ep}{1+\kappa \ep}$.\\
	
	\textbf{Step 7: Constructing the almost stationary uniform $\bm{18 \ep}$-equilibrium}\\
	Let $\eta'$ be as defined in Step 6 and $\eta < \eta'$.
	Let $\delta <
	\min \left\{\dfrac{\ep}{3},
	\dfrac{p_{min}}{2} \left( 1 - \dfrac{1}{\sqrt[\abs{Q'}]{2}} \right),
	\dfrac{p_{min}^3 \cdot \ep}{4 \abs{A}}\right\}$
	and
	$c_\ep <
	\min \left\{\dfrac{\ep}{1+\ep},
	\dfrac{p_{min}}{2} \left( 1 - \dfrac{1}{\sqrt[\abs{Q'}]{2}} \right),
	\dfrac{p_{min}^3 \cdot \ep}{4 \abs{A}}\right\}$.
	We will prove that every player $i \in I$ cannot gain more then $6\ep$ by deviating from
	$\sigma^{\delta, \eta}$
	to action $a_i \in A_i$ such that $a_i \notin C'_2$,
	thus, together with Step 6, completing the proof.\\
	Fix $i \in I$, and an action $a_i \in A_i$ such that $a_i \notin C'_2$.
	We will prove that
	\begin{equation*}
	\gamma(a_i, \sigma^{\delta, \eta}_{-i}) \le
	\gamma(\sigma^{\delta, \eta}) + 6\ep \period
	\end{equation*}
	By definition, for every $a_i \in A_i$
	\begin{equation*}
	\gamma(a_i, \sigma^{\delta, \eta}_{-i}) \le
	\gamma(a_i, \widehat{x}^{\delta, \eta}_{-i}) \period
	\end{equation*}
	In Eq. (\ref{eq:step3_le}) we found an upper bound to $\gamma(a_i, \widehat{x}^{\delta, \eta}_{-i})$:
	\begin{equation*}
	\gamma(a_i, \widehat{x}^{\delta, \eta}_{-i}) \le
	\dfrac{\chi(\left\{a^3, a^4\right\},(a_i, \widehat{x}^{\delta, \eta}_{-i}))}
	{\chi(A^1,(a_i, \widehat{x}^{\delta, \eta}_{-i}))} u(a^4) +
	\sum_{a \in \widetilde{A}^1} \dfrac{\chi(a,(a_i, \widehat{x}^{\delta, \eta}_{-i}))}
	{\chi(A^1,(a_i,\widehat{x}^{\delta, \eta}_{-i}))} u(a) +
	\ep \period
	\end{equation*}
	We will use the equivalence of
	$\widehat{x}^{\delta, \eta}$ and $\widehat{x}^{\delta}$.
	By Eq. (\ref{eq:step5})
	\begin{eqnarray*}
		&&\dfrac{\chi(\left\{a^3, a^4\right\},(a_i, \widehat{x}^{\delta, \eta}_{-i}))}
		{\chi(A^1,(a_i, \widehat{x}^{\delta, \eta}_{-i}))} u(a^4) +
		\sum_{a \in \widetilde{A}^1} \dfrac{\chi(a,(a_i, \widehat{x}^{\delta, \eta}_{-i}))}
		{\chi(A^1,(a_i,\widehat{x}^{\delta, \eta}_{-i}))} u(a) +
		\ep \\
		&&\qquad
		= \dfrac{\chi(\left\{a^3, a^4\right\},(a_i, \widehat{x}^{\delta}_{-i}))}
		{\chi(A^1,(a_i, \widehat{x}^{\delta}_{-i}))} u(a^4) +
		\sum_{a \in \widetilde{A}^1} \dfrac{\chi(a,(a_i, \widehat{x}^{\delta}_{-i}))}
		{\chi(A^1,(a_i,\widehat{x}^{\delta}_{-i}))} u(a) +
		\ep \period
	\end{eqnarray*}
	Using Eq. (\ref{eq:step4}),
	We can switch between the games $\Gamma$ and $\Gamma^{\delta, 0}$.
	Since $3 \delta < \ep$, we get that
	\begin{eqnarray*}
		&&\dfrac{\chi(\left\{a^3, a^4\right\},(a_i, \widehat{x}^{\delta}_{-i}))}
		{\chi(A^1,(a_i, \widehat{x}^{\delta}_{-i}))} u(a^4) +
		\sum_{a \in \widetilde{A}^1} \dfrac{\chi(a,(a_i, \widehat{x}^{\delta}_{-i}))}
		{\chi(A^1,(a_i,\widehat{x}^{\delta}_{-i}))} u(a) +
		\ep \\
		&&\qquad
		\le \dfrac{\chi^{\delta,0}(\left\{a^3, a^4\right\},(a_i, x_{-i}))}
		{\chi^{\delta,0}(A^1,(a_i, x_{-i}))} u(a^4) +
		\sum_{a \in \widetilde{A}^1} \dfrac{\chi^{\delta,0}(a,(a_i, x_{-i}))}
		{\chi^{\delta,0}(A^1,(a_i,x_{-i}))} u(a) +
		2\ep \period
	\end{eqnarray*}
	We relate the result to $\gamma(a_i, x_{-i})$,
	through its lower bound presented in Eq. (\ref{eq:step3_ge}):
	\begin{eqnarray*}
		&&\dfrac{\chi^{\delta,0}(\left\{a^3, a^4\right\},(a_i, x_{-i}))}
		{\chi^{\delta,0}(A^1,(a_i, x_{-i}))} u(a^4) +
		\sum_{a \in \widetilde{A}^1} \dfrac{\chi^{\delta,0}(a,(a_i, x_{-i}))}
		{\chi^{\delta,0}(A^1,(a_i,x_{-i}))} u(a) +
		2\ep \\
		&&\qquad
		\le \gamma(a_i, x_{-i}) + 3\ep \period
	\end{eqnarray*}	
	Since the mixed action profile $x$ is an equilibrium in the game $\Gamma$,
	we get that
	\begin{equation*}
	\gamma(a_i, x_{-i}) + 3 \ep \le \gamma(x) + 3 \ep \period
	\end{equation*}
	By the upper bound of $\gamma(x)$, presented in Eq. (\ref{eq:step2_le}), we get
	\begin{equation}
	\gamma(x) + 3 \ep \le
	\dfrac{\chi^{\delta,0}(\left\{a^3, a^4\right\},x)}
	{\chi^{\delta,0}(A^1, x)} u(a^4) +
	\sum_{a \in \widetilde{A}^1} \dfrac{\chi^{\delta,0}(a,x)}
	{\chi^{\delta,0}(A^1, x)} u(a)
	+ 4 \ep \period \nonumber
	\end{equation}
	We use Eq. (\ref{eq:step4}), to switch again between the games $\Gamma$ and $\Gamma^{\delta,0}$
	\begin{eqnarray*}
		&&\dfrac{\chi^{\delta,0}(\left\{a^3, a^4\right\},x)}
		{\chi^{\delta,0}(A^1, x)} u(a^4) +
		\sum_{a \in \widetilde{A}^1} \dfrac{\chi^{\delta,0}(a,x)}
		{\chi^{\delta,0}(A^1, x)} u(a)
		+ 4 \ep \\
		&&\qquad
		\le
		\dfrac{\chi(\left\{a^3, a^4\right\},\widehat{x}^\delta)}
		{\chi(A^1, \widehat{x}^\delta)} u(a^4) +
		\sum_{a \in \widetilde{A}^1} \dfrac{\chi(a,\widehat{x}^\delta)}
		{\chi(A^1, \widehat{x}^\delta)} u(a)
		+ 5 \ep \period
	\end{eqnarray*}
	We use again the equivalence of
	$\widehat{x}^{\delta, \eta}$ and $\widehat{x}^{\delta, \eta}$.
	By Eq. (\ref{eq:step5})
	\begin{eqnarray*}
		&&\dfrac{\chi(\left\{a^3, a^4\right\},\widehat{x}^\delta)}
		{\chi(A^1, \widehat{x}^\delta)} u(a^4) +
		\sum_{a \in \widetilde{A}^1} \dfrac{\chi(a,\widehat{x}^\delta)}
		{\chi(A^1, \widehat{x}^\delta)} u(a)
		+ 5 \ep \\
		&&\qquad =
		\dfrac{\chi(\left\{a^3, a^4\right\},\widehat{x}^{\delta, \eta})}
		{\chi(A^1, \widehat{x}^{\delta, \eta})} u(a^4) +
		\sum_{a \in \widetilde{A}^1} \dfrac{\chi(a,\widehat{x}^{\delta, \eta})}
		{\chi(A^1, \widehat{x}^{\delta, \eta})} u(a)
		+ 5 \ep \period
	\end{eqnarray*}
	By the lower bound of $\gamma(x)$, presented in Eq. (\ref{eq:step2_ge}), we conclude that
	\begin{equation*}
	\dfrac{\chi(\left\{a^3, a^4\right\},\widehat{x}^{\delta, \eta})}
	{\chi(A^1, \widehat{x}^{\delta, \eta})} u(a^4) +
	\sum_{a \in \widetilde{A}^1} \dfrac{\chi(a,\widehat{x}^{\delta, \eta})}
	{\chi(A^1, \widehat{x}^{\delta, \eta})} u(a)
	+ 5 \ep \le
	\gamma(\widehat{x}^{\delta, \eta}) + 6 \ep \period
	\end{equation*}
	By definition
	\begin{equation*}
	\gamma(\widehat{x}^{\delta, \eta}) + 6 \ep =
	\gamma(\sigma^{\delta, \eta}) + 6 \ep \comma
	\end{equation*}
	and the result follows.
\end{proof}\\\\
Note that the same is true for the symmetric case of $\Gamma^{0, \delta}_\alpha$ and $P^{0,1}$:
for every $\ep > 0$ there exist $\delta_\ep, c_\ep > 0$,
such that if $\delta < \delta_\ep$, $\alpha \in [0,1]$,
and the mixed action $x$ is a stationary equilibrium of $\Gamma^{0, \delta}_\alpha$
that satisfies $0 < P^{0,1}(x) < c_\ep$,
then the game $\Gamma$ admits an almost stationary uniform $\ep$-equilibrium.

\subsection{NQL Games}
\label{section:non q}
In this section we study NQL games.
These games are similar to general quitting games
that satisfy the second condition in Theorem \ref{theorem:Solan and Solan}.
The following lemma was proven by \citet[Section 3.2, Case 3]{general_quitting}.
This lemma claims that the limit of discount equilibria cannot be a continue action, if the continue payoff is a witness of the best response matrix.
\begin{lemma}
	\label{lemma:lambda absorbing}
	Let $\Gamma = (I, (C_i)_{i \in I}, (Q_i)_{i \in I}, P, u)$ be a generic quitting game whose best response matrix is not a Q-matrix.
	Let $q \in \dR^{\abs{I}}$ be a witness for this matrix.
	Denote by $\Gamma(q) = (I, (C_i)_{i \in I}, (Q_i)_{i \in I}, P, u)$ the quitting game that it similar to $\Gamma$, except that the non-absorbing payoff is $q$; that is, $u(c) \coloneqq q$.
	For every $\lambda > 0$, let $x^\lambda$ be a $\lambda$-discounted stationary equilibrium of $\Gamma(q)$.
	Then $\lim_{\lambda \to 0} x^\lambda$ is absorbing in $\Gamma(q)$.
\end{lemma}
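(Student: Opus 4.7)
The argument goes by contradiction. Suppose that along some convergent subsequence $x^0 := \lim_{\lambda \to 0} x^\lambda$ is non-absorbing in $\Gamma(q)$. Since in a quitting game every action profile containing a quitting action is absorbing with probability $1$, this forces $\alpha_i^\lambda := x_i^\lambda(q_i) \to 0$ for every player $i \in I$. The strategy is to extract from $(x^\lambda)$ in the limit a solution of $\lcp(R, q)$, contradicting the choice of $q$ as a witness.

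Passing to a further subsequence, assume that the following limits all exist: $w_i := \lim_{\lambda\to 0} \gamma_i^\lambda(x^\lambda)$, $P^\lambda := P(x^\lambda) \to 0$, and, with $D^\lambda := \lambda + P^\lambda - \lambda P^\lambda$,
\begin{equation*}
z_0 := \lim_{\lambda\to 0} \frac{\lambda}{D^\lambda}, \qquad z_j := \lim_{\lambda\to 0}\frac{\alpha_j^\lambda}{D^\lambda}\ \text{for}\ j \in I.
\end{equation*}
Because all $\alpha_j^\lambda \to 0$, we have $P^\lambda = 1 - \prod_j(1-\alpha_j^\lambda) = \sum_j \alpha_j^\lambda + o(P^\lambda)$, so the normalization yields $z = (z_0, z_1,\dots,z_{|I|}) \in \Delta(\{0,1,\dots,|I|\})$. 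Decomposing $\gamma_i^\lambda(x^\lambda)$ according to the first absorbing stage, where non-absorbing stages contribute $q_i$ and an absorbing stage in which player $j$ is the unique quitter contributes $R_{i,j} = u_i(q_j, c_{-j})$, together with the estimate that ``two or more players quit simultaneously'' has per-stage probability $o(P^\lambda)$, yields
\begin{equation*}
\gamma_i^\lambda(x^\lambda) \;=\; \frac{\lambda(1-P^\lambda)\,q_i \;+\; \sum_{j \in I}\alpha_j^\lambda R_{i,j} \;+\; o(P^\lambda)}{D^\lambda}.
\end{equation*}
Taking $\lambda \to 0$ gives $w = z_0\, q + R(z_1,\dots,z_{|I|})^\intercal$.

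The complementarity condition is extracted from the equilibrium property. For $\lambda$ sufficiently small, $\alpha_i^\lambda < 1$, so $\lambda$-discounted equilibrium gives $\gamma_i^\lambda(x^\lambda) \geq u_i(q_i, x_{-i}^\lambda)$, with equality whenever $\alpha_i^\lambda > 0$ (since both continue and quit must be indifferent in the support). Because $u_i(q_i, x_{-i}^\lambda) \to R_{i,i}$, we obtain $w_i \geq R_{i,i}$ in the limit, with $w_i = R_{i,i}$ whenever $z_i > 0$. Combined with $R_{i,i} \geq 0$ (the absorbing payoffs of the underlying positive recursive game $\Gamma$ are preserved in $\Gamma(q)$), this gives $w \in \dR_+^{|I|}$. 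Hence $(w, z)$ solves $\lcp(R, q)$, contradicting the hypothesis that $q$ is a witness of $R$.

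\textbf{Main obstacle.} The delicate step is the first-order expansion: verifying that the event of two or more simultaneous quits contributes only $o(P^\lambda)$ to the numerator of $\gamma_i^\lambda$. This is precisely what makes the best-response matrix $R$ appear in the limit identity $w = z_0 q + Rz$, rather than a higher-order tensor involving payoffs at multi-quit action profiles. Once this estimate is in place, the complementarity relation $z_i(w_i - R_{i,i}) = 0$ and the simplex constraint $z \in \Delta$ are formal consequences of the equilibrium inequalities and the chosen normalization.
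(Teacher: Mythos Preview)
Your argument is correct and is precisely the standard one: the paper does not prove this lemma itself but cites \cite[Section 3.2, Case 3]{general_quitting}, and your extraction of an $\lcp(R,q)$ solution $(w,z)$ from the normalized limits $z_0=\lim \lambda/D^\lambda$, $z_j=\lim \alpha_j^\lambda/D^\lambda$, together with the first-order estimate on multi-quit events and the indifference/best-response inequalities, is exactly that proof. The same template (without the $z_0$ term, since there one works with undiscounted equilibria of absorbing auxiliary games) is what the paper carries out explicitly in the proof of Lemma~\ref{lemma:auxiliary absorbing}.
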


We will prove a similar version of this lemma, which claims that the limit of undiscounted equilibria in the auxiliary games cannot be the action profile $a^1$.
\begin{lemma}
	\label{lemma:auxiliary absorbing}
	Let $\Gamma = (I, (C_i)_{i \in I}, (Q_i)_{i \in I}, P, u)$ be an NQL game.
	Let $\delta: \dN \to [0,1]^2 \setminus \left\{(0,0)\right\}$ be a function such that
	$\lim_{n \to \infty} \delta(n) = \vec{0}$.
	Let $(x^n)_{n \in \dN}$ be a converging sequence of mixed action profiles such that $x^n$ is a stationary equilibrium in the auxiliary game $\Gamma^{\delta(n)}$ for every $n \in \dN$,
	and define $x^\infty \coloneqq \lim_{n \to \infty} x^n$.
	Then, $x^\infty \ne a^1$.
\end{lemma}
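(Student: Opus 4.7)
The plan is to argue by contradiction: assume $x^\infty=a^1$ and use the stationary equilibrium of $x^n$ in $\Gamma^{\delta(n)}$ to produce a solution of $\lcp(R,q)$ for some $R$ belonging to one of the NQL-forbidden sets $\mathcal{R}(\Gamma^{1,0},a^1)$, $\mathcal{R}(\Gamma^{0,1},a^1)$, or $\mathcal{R}(\Gamma^{1,1},a^1)$.

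By passing to a subsequence I may assume $(\delta_1(n),\delta_2(n))$ stays in exactly one of three regimes: (R1) $\delta_1(n)>0=\delta_2(n)$; (R2) $\delta_1(n)=0<\delta_2(n)$; or (R3) both positive. In each regime I view $\Gamma^{\delta(n)}$ as a relaxed general quitting game (in the extended sense of the note following Theorem~\ref{theorem:Solan and Solan}, permitting absorbing probabilities in $(0,1]$) by treating $c_1^2$ (in R1 and R3) and/or $c_2^2$ (in R2 and R3) as additional quitting actions. By Observation~\ref{observarion:best response matrix}, the best response matrices at $a^1$ of this auxiliary game coincide, respectively, with $\mathcal{R}(\Gamma^{1,0},a^1)$, $\mathcal{R}(\Gamma^{0,1},a^1)$, or $\mathcal{R}(\Gamma^{1,1},a^1)$---precisely the sets forbidden by NQL. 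I describe regime R1; the other two are symmetric.

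Let $\rho^n\coloneqq P^{\delta(n)}(x^n)$; since $x^n\to a^1$ and $a^1$ is non-absorbing, $\rho^n\to 0$. After a further subsequence I may assume either $\rho^n>0$ for every $n$ or $\rho^n=0$ for every $n$. In the degenerate case $\rho^n\equiv 0$, one has $x_1^n(c_1^2)=0$ and $x_i^n(Q_i)=0$ for every $i$, and the equilibrium conditions then force every best quitting payoff at $a^1$ to vanish (e.g., Player~1's deviation to $c_1^2$ gives payoff $u_1(a^4)$, which must be at most the equilibrium payoff $0$ and hence equal $0$ by positivity). Therefore every $R\in\mathcal{R}(\Gamma^{1,0},a^1)$ satisfies $R_{i,i}=0$ for all $i$, and then the choice $z_0=0$, $z_i=1$, $z_j=0$ for $j\notin\{0,i\}$, together with $w$ equal to the $i$-th column of $R$, is a valid solution of $\lcp(R,q)$ for every $q$; this contradicts NQL.

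Hence $\rho^n>0$. Decompose $\rho^n=\sum_i\rho^n_i+\rho^n_{>1}$ into single-player and multi-player quit contributions, using $Q_1\cup\{c_1^2\}$ as Player~$1$'s effective quitting set. An estimate in the spirit of Step~2 of Lemma~\ref{lemma:alpha equilibrium} yields $\rho^n_{>1}/\rho^n\to 0$. After a further subsequence, $\hat z_i^n\coloneqq\rho^n_i/\rho^n\to z_i\geq 0$ with $\sum_i z_i=1$; for each $i$ with $z_i>0$ the absorption-weighted conditional law of Player~$i$'s quitting action converges to some $q_i^\infty$, and the stationary equilibrium of $x^n$ forces every element of $\supp q_i^\infty$ to maximize $u_i(\cdot,a^1_{-i})$. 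Choosing any best quitting response when $z_i=0$, the matrix $R$ whose $i$-th column is $u^{1,0}(q_i^\infty,a^1_{-i})$ belongs to $\mathcal{R}(\Gamma^{1,0},a^1)$. Passing to the limit in $v_i^n=(\rho^n)^{-1}\sum_a\chi^{\delta(n)}(a,x^n)u_i^{\delta(n)}(a)$ and discarding the vanishing terms yields $w\coloneqq v^\infty=Rz\geq 0$; combined with the indifference-derived complementarity $z_i>0\Rightarrow w_i=R_{i,i}$, setting $z_0=0$ gives a solution of $\lcp(R,q)$ for \emph{every} $q\in\dR^{\abs{I}}$ (the $q$-term drops out when $z_0=0$), again contradicting NQL via the witness of $R$. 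The main technical obstacle is the quantitative estimate $\rho^n_{>1}/\rho^n\to 0$ (analogous to Step~2 of Lemma~\ref{lemma:alpha equilibrium}); ties among best responses when $z_i=0$ are handled by observing that \emph{any} admissible choice produces a matrix in the forbidden set, so the argument is unaffected.
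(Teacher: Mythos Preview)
Your approach is essentially identical to the paper's: assume $x^\infty=a^1$, pass to a subsequence lying in one of three $\delta$-regimes, show that multi-player-quit contributions to the per-stage absorption are asymptotically negligible, and assemble from the limiting conditional absorption weights a pair $(w,z)$ with $z_0=0$ solving $\lcp(R,q)$ for a best-response matrix $R$ in the corresponding NQL-forbidden family---the paper carries out exactly these four steps.

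One point deserves care: your claim that R2 and R3 are ``symmetric'' to R1 hides a wrinkle in R3. When both $\delta_1(n),\delta_2(n)>0$, the profile $a^4$ is a \emph{two}-player quit (both $c_1^2$ and $c_2^2$ are effective quitting actions), and its relative weight $x^n(a^4)P(a^4)/P^{\delta(n)}(x^n)$ need not tend to $0$, since $P(a^4)$ is bounded away from $0$ while the competing single-quit weights for $a^2,a^3$ carry the small factors $\delta_2(n),\delta_1(n)$. The paper deals with this explicitly by folding the $a^4$-mass into Player~1's $c_1^2$ contribution (harmless because $u^{1,1}(a^2)=u^{1,1}(a^3)=u^{1,1}(a^4)=u(a^4)$, so the resulting mixed quitting action is still a best response and the payoff identity $w=Rz$ is preserved). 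With that one adjustment your sketch goes through and matches the paper's argument.
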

\begin{proof}
	Assume by contradiction that $x^\infty = a^1$.\\
	
	\textbf{Step 1: Defining auxiliary games}\\
	Since $\lim_{n \to \infty} \delta(n) = \vec{0}$,
	there exists a sequence $\mathcal{M} \coloneqq \left\{m_n\right\}_{n=1}^\infty$ of natural numbers,
	that satisfies one of the following conditions:
	\begin{itemize}
		\item[(M1)] The two sequences
		$\left\{ \delta_1(m_n) \right\}_{n=1}^\infty$ and 
		$\left\{ \delta_2(m_n) \right\}_{n=1}^\infty$
		are strictly decreasing and positive.
		That is, for every $n < n' \in \dN$, we have
		$0 < \delta_1(m_{n'}) < \delta_1(m_n)$ and $0 < \delta_2(m_{n'}) < \delta_2(m_n)$.
		\item[(M2)] The sequence
		$\left\{ \delta_1(m_n) \right\}_{n=1}^\infty$ is strictly decreasing and positive,
		while $\delta_2(m) = 0$ for every $m \in \mathcal{M}$.
		\item[(M3)] The sequence
		$\left\{ \delta_2(m_n) \right\}_{n=1}^\infty$ is strictly decreasing and positive,
		while $\delta_1(m) = 0$ for every $m \in \mathcal{M}$.
	\end{itemize}
	Condition (M3) is symmetric to condition (M2),
	hence we assume without loss of generality that we have a subsequence $\mathcal{M} \subseteq \dN$ of infinity size that satisfies either condition (M1) or condition (M2).\\
	Let $\Gamma^{\mathcal{M}} = (I, (C_i)_{i \in I}, (Q_i)_{i \in I}, P^{\mathcal{M}}, u^{\mathcal{M}})$ be the quitting game defined as follows:
	\begin{itemize}
		\item	$P^{\mathcal{M}}(a) = 1$ for every $a \ne a^1,a^2 \in A$,
		and $P^{\mathcal{M}}(a^1) = 0$,
		\item	If $M$ satisfies Condition (M1):
		$P^{\mathcal{M}}(a^2) = 1$, and $u^{\mathcal{M}} = u^{0.5, 0.5}$,
		\item	If $M$ satisfies Condition (M2):
		$P^{\mathcal{M}}(a^2) = 0$, and $u^{\mathcal{M}} = u^{0.5, 0}$.
	\end{itemize}
	By Observation \ref{observarion:best response matrix}, the best response matrices of $\Gamma^\mathcal{M}$
	coincide with the best response matrices of $\Gamma^{\delta(m)}$ for every $m \in \mathcal{M}$.
	Consequently, the game $\Gamma^\mathcal{M}$ is NQL game.\\
	
	\textbf{Step 2: Representing } $\bm{\lim_{m \to \infty} u^{\delta(m)}(x^{m})}$\\
	Recall that $P^{\delta(m)}(x^{m})$ is the probability under the strategy profile $x^{m}$ that the auxiliary game $\Gamma^{\delta(m)}$ terminates in a single stage.
	For every action profile $a \in A$,
	let $z(a) \coloneqq \lim_{m \to \infty} \dfrac{x^{m}(a) \cdot P^{\delta(m)}(a)}{P^{\delta(m)}(x^{m})}$ be the limit probability of absorption by action profile $a$ under $x^{m}$.
	We claim that
	\begin{eqnarray}
	\lim_{m \to \infty} u^{\delta(m)}(x^{m}) &=&
	\sum_{i \in I} \sum_{q_i \in Q_i} z(q_i, a^1_{-i}) \cdot u(q_i, a^1_{-i}) + \label{eq:aux 1}\\
	&&+ (z(c_1^2, c_2^1, a^1_{-1,2}) + z(c_1^1, c_2^2, a^1_{-1,2}) \nonumber \\
	&&+ z(c_1^2, c_2^2, a^1_{-1,2})) \cdot u(a^4) \period \nonumber
	\end{eqnarray}
	To show that Eq.~\eqref{eq:aux 1} holds, we recall that
	\begin{equation}
	\label{eq:Q1}
	u^{\delta(m)}(x^{m}) =
	\sum_{a \in A} \dfrac
	{x^{m}(a) \cdot P^{\delta(m)}(a) \cdot u(a)}
	{P^{\delta(m)}(x^{m})} \period
	\end{equation}
	We will show that $z(a) = 0$ for every action profile
	$a \not \in \left\{(q_i, a^1_{-i})\right\}_{q_i \in Q_i} \cup \left\{a^2, a^3, a^4\right\}$.
	Let then $a$ be such an action profile,
	and let $i_1,\dots,i_k$ be the players such that $a_{i_j} \in Q_{i_j}$ is a quitting action.
	We can assume without loss of generality that $(i_1,a_{i_1})  \ne (1, c_1^2),(2,c_2^2)$.
	Then $a_{i_1}\in Q_{i_1}$ and $P^{\delta(m)}(a^1_{-i_1}, a_{i_1})$ is independent of $m$ and $\delta$.
	Because $x^\infty = a^1$ is non-absorbing in $\Gamma^{\delta(m)}$, for every $i \in \left\{i_1,...i_k\right\}$, we have $\lim_{m \to \infty} x^{m}_i(a_i) = 0$. Hence,\footnotemark
	\begin{eqnarray*}
		&&x^{m}(a^1_{-i_1,\dots,-i_k},a_{i_1},\dots,a_{i_k}) \cdot  P^{\delta(m)}(a^1_{-i_1,\dots,-i_k},a_{i_1},\dots,a_{i_k}) \le \\
		&&\qquad \le x^{m}(a^1_{-i_1,\dots,-i_k},a_{i_1},\dots,a_{i_k}) \ll
		x^{m}(a^1_{-i_1},a_{i_1}) \cdot P^{\delta(m)}(a^1_{-i_1},a_{i_1}) \period
	\end{eqnarray*}
	It follows that $x^{m}(a^1_{-i_1,\dots,-i_k},a_{i_1},\dots,a_{i_k}) \ll x^{m}(a^1_{-i_1},a_{i_1})$, and the claim follows.\\
	\footnotetext{We denote $f(\delta) \ll g(\delta)$ whenever $\lim_{\delta \to 0}\frac{f(\delta)}{g(\delta)} = 0$.}
	
	\textbf{Step 3: Constructing best response action profiles}\\
	Since $x^{m}$ is a stationary equilibrium in $\Gamma^{\delta(m)}$,
	and since $x^\infty = a^1$, for every player $i$ and every action $a_i \ne a^1_i \in A_i$,
	if $x^{m}_i(a_i) > 0$ for every large enough $m$,
	then $a_i$ is a best response against $a^1_{-i}$ in the game $\Gamma^{\delta(m)}$.
	Therefore, for every player $i \in I$
	who satisfies $P^{\delta(m)}(x_i^{m}, a^1_{-i}) > 0$ for every $m$ sufficient large,
	every action profile $y_i$ that assigns a positive probability only to actions $a_i \in A_i$ that satisfy $x_i^{m}(a_i) > 0$ (for every $m$ sufficiently large),
	is a best response to $a^1_{-i}$ in $\Gamma^{\mathcal{M}}$,
	since they have the same best response matrices.	
	
	We now define such a mixed action profile, $y$.
	For every player $i \ne 1$,
	$$y_i(a_i) \coloneqq \lim_{m \to \infty}
	\dfrac{x_i^{m}(a_i) \cdot P^{\delta(m)}(a_i, a^1_{-i})}{P^{\delta(m)}(x_i^{m}, a^1_{-i})} \period$$
	Thus, $y_i$ represents the proportion probability of the player to quit with a specific action.
	For Player 1, we increase the probability to play the action $c_1^2$ by an amount that depends on the probability of the game to be absorb in $a^4$.
	If $x^{m}(a^4) > 0$ then $x^{m}_1(c_1^2) > 0$ and $x^{m}_2(c_2^2) > 0$.
	Let $P^{\delta(m)}_{1,2}(x^{m}) \coloneqq \dfrac{x^{m}(a^4) \cdot P^{\delta(m)}(a^4)}{P^{\delta(m)}(x^{m})}$.
	Therefore, if $P^{\delta(m)}(x_1^{m}, a^1_{-1}) + P^{\delta(m)}_{1,2}(x^{m}) > 0$ for every $m$ sufficient large, the following action profile is a best response of Player 1 against $a^1_{-1}$ in $\Gamma^{\mathcal{M}}$:
	\[
	y_1(a_1) = 
	\begin{dcases*}
	\lim_{m \to \infty} \dfrac{x_1^{m}(a_1) \cdot P^{\delta(m)}(a_1, a^1_{-1})}{P^{\delta(m)}(x_1^{m}, a^1_{-1}) + P^{\delta(m)}_{1,2}(x^{m})} & if $a_1 \in Q_1$,\\
	\lim_{m \to \infty} \dfrac{x_1^{m}(c_1^2) \cdot P^{\delta(m)}(a^3) + P^{\delta(m)}_{1,2}(x^{m})}{P^{\delta(m)}(x_1^{m}, a^1_{-1}) + P^{\delta(m)}_{1,2}(x^{m})} & if $a_1 = c_1^2$,\\
	0 & if $a_1 = c_1^1$.
	\end{dcases*}
	\]\\
	
	\textbf{Step 4: The contradiction}\\
	The matrix $R \coloneqq (r^i)_{i \in I}$ is a best response matrix in $\Gamma^{\mathcal{M}}$,
	and therefore it is not a Q-matrix.
	We will derive a contradiction by showing that the linear complementary problem $\lcp(R,q)$ has a solution,
	for every $q \in R^{\abs{I}}$.
	Let $z \coloneqq (z_1,\dots,z_{\abs{I}})$, where
	\[ z_i \coloneqq
	\begin{dcases*}
	\sum_{q_1 \in Q_1} y(q_1, a^1_{-i}) + y(a^3) + y(a^4) \comma & $i = 1$,\\
	\sum_{q_2 \in Q_2} y(q_2, a^1_{-i}) + y(a^2) \comma & $i = 2$,\\
	\sum_{q_i \in Q_i} y(q_i, a^1_{-i}) \comma & $i > 3$.
	\end{dcases*}
	\]
	If $z_1 = 0$, define $r^1 \coloneqq \vec{0}$. Otherwise, define
	\begin{eqnarray*}
		r^1 \coloneqq&&
		\sum_{q_1 \in Q_1} \lim_{m \to \infty}
		\dfrac{x_1^{m}(q_1) \cdot P^{\delta(m)}(q_1, a^1_{-1})}
		{P^{\delta(m)}(x_1^{m}, a^1_{-1}) +
			P^{\delta(m)}_{1,2}(x^{m})} \cdot u(q_1)\\
		&&+	\lim_{m \to \infty}
		\dfrac{x_1^{m}(c_1^2) \cdot P^{\delta(m)}(a^3) +
			P^{\delta(m)}_{1,2}(x^{m})}
		{P^{\delta(m)}(x_1^{m}, a^1_{-1}) +
			P^{\delta(m)}_{1,2}(x^{m})} u(a^4)
		\period
	\end{eqnarray*}
	If $z_2 = 0$, define $r^2 \coloneqq \vec{0}$. Otherwise, define
	\begin{eqnarray*}
		r^2 \coloneqq&&
		\sum_{q_2 \in Q_2} \lim_{m \to \infty}
		\dfrac{x_2^{m}(q_2) \cdot P^{\delta(m)}(q_2, a^1_{-2})}
		{P^{\delta(m)}(x_2^{m}, a^1_{-2})}
		\cdot u(q_2)\\
		&&+ \lim_{m \to \infty}
		\dfrac{x_2^{m}(c_2^2) \cdot P^{\delta(m)}(a^2)}
		{P^{\delta(m)}(x_2^{m}, a^1_{-2})}
		u(a^2) \period
	\end{eqnarray*}
	For every player $i \ne 1$, if $z_i = 0$, define $r^i \coloneqq \vec{0}$. Otherwise, define
	$$r^i \coloneqq  \sum_{q_i \in Q_i} \lim_{m \to \infty} \dfrac{x_i^{m}(q_i) \cdot P^{\delta(m)}(q_i, a^1_{-i})}{P^{\delta(m)}(x_i^{m}, a^1_{-i})} \cdot u(q_i)
	\period$$
	Set $w \coloneqq \lim_{m \to \infty} u_{\delta(m)}(x^{m})$ to be the limit payoff under $x^{m}$.
	Since the game is positive, $w \in \dR_+^{\abs{I}}$.
	We note that:
	\begin{enumerate}
		\item[(Q1)]	$w = Rz$, \label{equation:Q1}
		\item[(Q2)]	$z$ is a probability distribution:
		$z_i \ge 0$ for every $i \in I$,
		and $\sum_{i=1}^{\abs{I}} z_i = 1$, \label{equation:Q2}
		\item[(Q3)]	$w_i \ge R_{i,i}$ for every $i \in I$, \label{equation:Q3}
		\item[(Q4)]	if $z_i > 0$ then $w_i = R_{i,i}$ for every $i \in I$. \label{equation:Q4}
	\end{enumerate}
	Indeed, Condition (Q1) is given by Eq.~(\ref{eq:Q1}).
	Condition (Q2) follows from the definitions.
	Conditions (Q3) and (Q4) hold by the definition of equilibrium:
	for every $\delta$ sufficient small and for every $i \in I$,
	player $i$ cannot gain more than the equilibrium payoff while quitting alone, since the quitting probability of all other players is small, and quitting alone is an option for player $i$ in the game. If $z_i > 0$, then player $i$ quits with a positive probability and he is indifferent between quitting alone and the equilibrium payoff.
	Conditions (Q1)-(Q4) imply that $R$ is a Q-matrix, a contradiction.
\end{proof}\\

The next lemma asserts that if the limit of a sequence of stationary equilibria is absorbing in a sequence of games,
then the limit is an equilibrium in the limit game.
The claim is similar to Lemma 4 of \cite{absorbing},
hence its proof is omitted.
\begin{lemma}
	\label{lemma:absorbing equilibrium}
	Let $\Gamma = (I, (C_i)_{i \in I}, (Q_i)_{i \in I}, P, u)$ be an NQL game.
	Let $\delta: \dN \to [0,1]^2 \setminus \left\{(0,0)\right\}$ be a function such that
	$\lim_{n \to \infty} \delta(n) = \vec{0}$.
	Let $(x^n)_{n \in \dN}$ be a converging sequence of mixed action profiles such that $x^n$ is a stationary equilibrium in the auxiliary game $\Gamma^{\delta(n)}$ for every $n \in \dN$,
	and set $x^\infty \coloneqq \lim_{n \to \infty} x^n$.
	If $x^\infty$ is absorbing in $\Gamma$, that is, $P(x^\infty) > 0$, then $x^\infty$ is a stationary uniform $0$-equilibrium of $\Gamma$.
\end{lemma}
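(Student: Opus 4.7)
The plan is to leverage the equivalence between uniform $\ep$-equilibrium and undiscounted $\ep$-equilibrium in positive recursive absorbing games, recalled after Definition~\ref{definition:uniform}, so that it suffices to show that $x^\infty$ is an \emph{undiscounted} $0$-equilibrium of $\Gamma$. Because $P(x^\infty) > 0$, the stationary profile $x^\infty$ is absorbing in $\Gamma$, and the undiscounted payoff $\gamma_i(x^\infty)$ is well-defined for every player $i$; moreover, $P^{\delta(n)}(x^n) \to P(x^\infty) > 0$, so all relevant denominators remain bounded away from zero along the sequence.

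The key continuity step is to show that $\gamma_i^{\delta(n)}(x^n) \to \gamma_i(x^\infty)$ for every $i \in I$. By Definition~\ref{def:auxiliary}, the game $\Gamma^{\delta(n)}$ differs from $\Gamma$ only at the entries $a^2$ and $a^3$, where absorption is added with probability $\delta_2(n)$ and $\delta_1(n)$, respectively, with absorbing payoff $u(a^4)$. Writing $\gamma_i^{\delta(n)}(x^n)$ as the ratio of per-stage absorption contributions to the total per-stage absorption probability, the extra absorption mass concentrated at $a^2$ and $a^3$ is bounded by $\delta_1(n) + \delta_2(n) \to 0$, while $u^{\delta(n)}(a) = u(a)$ for every action profile $a$ with $P(a) > 0$. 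A routine bookkeeping, combined with $x^n \to x^\infty$, yields the claimed convergence; the analogous estimate gives $\gamma_i^{\delta(n)}(a_i, x^n_{-i}) \to \gamma_i(a_i, x^\infty_{-i})$ whenever $(a_i, x^\infty_{-i})$ is absorbing in $\Gamma$.

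Next, I would fix a player $i \in I$ and a pure deviation $a_i \in A_i$; since the opponents play stationary, the one-player MDP faced by player $i$ admits a pure stationary best response, so it is enough to rule out profitable pure deviations. The equilibrium property of $x^n$ in $\Gamma^{\delta(n)}$ gives $\gamma_i^{\delta(n)}(a_i, x^n_{-i}) \le \gamma_i^{\delta(n)}(x^n)$ for every $n$. If $(a_i, x^\infty_{-i})$ is absorbing in $\Gamma$, passing to the limit via the continuity step above yields $\gamma_i(a_i, x^\infty_{-i}) \le \gamma_i(x^\infty)$. If instead $(a_i, x^\infty_{-i})$ is non-absorbing in $\Gamma$, then by positivity and recursiveness $\gamma_i(a_i, x^\infty_{-i}) = 0 \le \gamma_i(x^\infty)$, since $u \ge 0$. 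Either way, no pure deviation is profitable, so $x^\infty$ is an undiscounted $0$-equilibrium of $\Gamma$, and the equivalence cited above upgrades this to a uniform $0$-equilibrium.

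The main obstacle lies in the continuity claim when $(a_i, x^\infty_{-i})$ is absorbing in $\Gamma^{\delta(n)}$ only through the entries $a^2$ or $a^3$, so that $P^{\delta(n)}(a_i, x^n_{-i})$ is of the same order of magnitude as $\delta(n)$. In that regime the ratio $\chi^{\delta(n)}(a,(a_i, x^n_{-i})) / P^{\delta(n)}(a_i, x^n_{-i})$ is an indeterminate form of type $0/0$ and must be handled by a careful balancing of the $\delta_1(n)$ and $\delta_2(n)$ contributions against the native $\Gamma$-absorption mass. This is precisely the type of argument treated in \cite[Lemma~4]{absorbing}, whose machinery applies verbatim here and which justifies the omission of the detailed proof.
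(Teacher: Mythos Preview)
The paper omits the proof entirely, referring the reader to Lemma~4 of \cite{absorbing}; your sketch is correct and is precisely the standard continuity argument that underlies that reference, so the approaches coincide. One small remark: your final paragraph identifies an ``obstacle'' that does not actually arise in your own argument --- your dichotomy on whether $(a_i, x^\infty_{-i})$ is absorbing in $\Gamma$ already covers all cases cleanly, since in the non-absorbing case you invoke positivity and recursiveness to get $\gamma_i(a_i, x^\infty_{-i}) = 0$ directly, and in the absorbing case $P(a_i, x^\infty_{-i}) > 0$ keeps the denominator bounded away from zero, so no $0/0$ indeterminacy occurs and the appeal to \cite{absorbing} at the end is not needed beyond what you already wrote.
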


The following theorem, presented original by \cite{browder_1960} will be helpful during Lemma \ref{lemma:NQL main}.
\begin{theorem}[Browder, 1960]
	\label{Theorem:browder}
	Let $X \subseteq \dR^n$ be a convex and compact set,
	and let $F : [0,1] \times X \to X$ be a continuous function.
	Define $C_F := \{ (t,x) \in [0,1] \times X \colon x = f(t,x)\}$ be the set of fixed points of $f$.
	There is a connected component $T$ of $C_F$ such that $T \cap (\{0\} \times X) \neq \emptyset$
	and $T \cap (\{1\} \times X) \neq \emptyset$.
\end{theorem}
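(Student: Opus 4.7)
The plan is to prove Browder's connectedness-of-fixed-points theorem by topological degree theory together with the classical separation fact for components in compact Hausdorff spaces. First I would observe that $C_F$ is compact: it is the preimage of the diagonal of $X \times X$ under the continuous map $(t,x) \mapsto (x, F(t,x))$, hence closed, and $[0,1]\times X$ is compact. Moreover, for every $t \in [0,1]$ the slice $\{x \in X : (t,x) \in C_F\}$ is nonempty by Brouwer's fixed point theorem applied to $F(t,\cdot)$, so in particular $C_F \cap (\{0\}\times X)$ and $C_F \cap (\{1\}\times X)$ are nonempty compact sets.

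I would then argue by contradiction. Suppose no connected component of $C_F$ meets both $\{0\}\times X$ and $\{1\}\times X$. By the standard lemma on components in compact Hausdorff spaces (quasi-components coincide with components, and if no component joins two disjoint closed sets $A_0$, $A_1$ then $C_F$ splits as a disjoint union of two compact sets $K_0 \supseteq A_0$ and $K_1 \supseteq A_1$), we obtain disjoint compact sets $K_0, K_1$ whose union is $C_F$. By normality of $[0,1]\times X$ there exist disjoint open neighbourhoods $U_0 \supseteq K_0$ and $U_1 \supseteq K_1$, and I may further assume $U_0 \cap U_1 = \emptyset$ with $\overline{U}_0 \cap \overline{U}_1 = \emptyset$.

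The key step is now a homotopy-invariance argument for the Brouwer degree. Define $\Phi_t : X \to \dR^n$ by $\Phi_t(x) = x - F(t,x)$. On the boundary of the slice $(U_0)_t := \{x : (t,x) \in U_0\}$, the map $\Phi_t$ has no zeros (any zero would be a fixed point in $\partial U_0$, contradicting $\partial U_0 \cap C_F = \emptyset$). Hence the local degree $\deg(\Phi_t, (U_0)_t, 0)$ is defined for every $t$. Because the family is continuous in $t$ and the zero set stays bounded away from the boundary, homotopy invariance gives that this degree is constant in $t$. At $t = 0$ every fixed point of $F(0,\cdot)$ lies in $K_0 \subseteq U_0$, so by Brouwer the degree equals $1$; at $t = 1$ no fixed point of $F(1,\cdot)$ lies in $U_0$ (they all lie in $K_1 \subseteq U_1$), so the degree is $0$. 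This contradiction completes the proof.

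The main obstacle I anticipate is the careful treatment of the parameterized boundaries $(U_0)_t$: $U_0$ is an open subset of the product $[0,1]\times X$, not a product neighbourhood, so the slices can change shape with $t$ and may even be empty for some values of $t$. The cleanest way to handle this is to invoke a parameterized (fibrewise) Brouwer degree on the whole open set $U_0 \subseteq [0,1]\times X$ with respect to the map $(t,x) \mapsto \Phi_t(x)$, using the excision and homotopy axioms for the degree; alternatively, one can replace $U_0$ by a slightly shrunken set so that its slices are nonempty open regions for all $t$, then apply classical finite-dimensional degree theory slice-by-slice.
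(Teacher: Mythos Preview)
The paper does not prove this theorem at all: it is quoted as an external result of \cite{browder_1960} and then applied in Step~2 of Lemma~\ref{lemma:NQL main}. So there is no ``paper's own proof'' to compare against; you have supplied an argument where the authors simply cite the literature.

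Your outline is the standard degree-theoretic proof of Browder's theorem and is essentially correct. The two-step structure --- (i) use the compact-Hausdorff component/quasi-component coincidence to split $C_F = K_0 \sqcup K_1$ with $K_0 \supseteq C_F \cap (\{0\}\times X)$ and $K_1 \supseteq C_F \cap (\{1\}\times X)$, then (ii) derive a contradiction from homotopy invariance of the Brouwer degree of $x \mapsto x - F(t,x)$ over the $t$-slices of a neighbourhood $U_0$ of $K_0$ --- is exactly Browder's original strategy. The obstacle you flag (that $U_0$ is open in the product, so slices vary with $t$) is the one genuine technical point, and the fix you propose --- invoking the generalized homotopy invariance of degree for an open set in $[0,1]\times \dR^n$ whose fibrewise boundary avoids the zero set --- is the standard remedy.

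One minor point deserves a sentence in a full write-up: as stated, $X$ is only convex and compact in $\dR^n$, so it may have empty interior, in which case the phrase ``boundary of the slice $(U_0)_t$'' and the degree $\deg(\Phi_t,(U_0)_t,0)$ need interpretation. The usual fix is to extend $F$ to a closed ball $B \supseteq X$ via a retraction $r: B \to X$, work with $\widetilde F(t,x) = F(t,r(x))$ on $B$, and carry out the degree argument there; convexity of $X$ guarantees the retraction exists and that the total degree of $\mathrm{id}-\widetilde F(0,\cdot)$ over $B$ is $1$.
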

In the notations of Browder's Theorem, 
by Kakutani's Fixed Point Theorem, every set-valued function $F : X \to X$ with non-empty and convex values and closed graph
has at least one fixed point. 
Browder's Theorem states that when the set-valued function $F$ depends continuously on a one-dimensional parameter whose range
is $[0,1]$, 
the set of fixed points, as a function of the parameter, has a connected component whose projection to the set of parameters is $[0,1]$.\\

We are now ready to prove the main result of this section: every NQL game admits an almost stationary $\ep$-equilibrium.
\begin{lemma}
	\label{lemma:NQL main}
	Let $\Gamma = (I, (C_i)_{i \in I}, (Q_i)_{i \in I}, P, u)$ be an NQL game.
	Then for every $\ep > 0$, the game $\Gamma$ admits an almost stationary uniform $\ep$-equilibrium.
\end{lemma}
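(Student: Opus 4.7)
The plan is to apply Browder's Theorem (Theorem \ref{Theorem:browder}) to a continuously parametrized family of auxiliary games, obtaining a connected curve of stationary equilibria, and then argue that the curve must intersect one of the three situations already handled: the hypothesis of Lemma \ref{lemma:alpha equilibrium}, the hypothesis of Lemma \ref{lemma:absorbing equilibrium}, or the situation ruled out by Lemma \ref{lemma:auxiliary absorbing}.

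First I would set up the parametrization. Fix $\ep > 0$ and let $c_\ep, \delta_\ep$ be given by Lemma \ref{lemma:alpha equilibrium}. For $t \in [0,1]$, define a family of games $\Gamma_t$ interpolating between $\Gamma_1 := \Gamma^{\delta_\ep, 0}$ (a general quitting game, whose equilibrium correspondence is non-empty, convex, compact-valued, and upper hemi-continuous) and a boundary case $\Gamma_0$ representing the ``limit'' toward $\Gamma$. Up to a standard smoothing of best-response maps so that Browder's Theorem applies, this yields a connected component $T$ of fixed points of the associated continuous map spanning both $t=0$ and $t=1$. Pulling back, one obtains a continuous-in-$t$ selection $t \mapsto x^t$ of stationary equilibria in $\Gamma_t$ for $t$ in a dense subset of $[0,1]$, with well-defined limits at the endpoints.

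Next I would track the quantity $P^{1,0}(x^t)$, which represents the probability that $x^t$ triggers absorption in the auxiliary game $\Gamma^{1,0}$. This function varies continuously in $t$, and takes the value $0$ precisely when $x^t$ is concentrated on non-absorbing profiles of $\Gamma^{1,0}$, in particular on $a^1$ (since the only non-absorbing profile in $\Gamma^{1,0}$ among $a^1, \dots, a^4$ is $a^1$). Three sub-cases then cover all possibilities: (i) there exists $t \in (0,1]$ with $0 < P^{1,0}(x^t) < c_\ep$, at which point Lemma \ref{lemma:alpha equilibrium} (or its symmetric version applied with $\Gamma^{0, \delta}_\alpha$) directly produces an almost stationary uniform $\ep$-equilibrium of $\Gamma$; (ii) the limit $x^\infty := \lim_{t \to 0} x^t$ is absorbing in $\Gamma$, in which case Lemma \ref{lemma:absorbing equilibrium} provides a stationary uniform $0$-equilibrium; (iii) the limit is non-absorbing in $\Gamma$, so $x^\infty \in \{a^1, a^2, a^3\}$. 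Lemma \ref{lemma:auxiliary absorbing} excludes $x^\infty = a^1$, and the intermediate-value argument on $P^{1,0}(x^t)$ running between $P^{1,0}(x^1) > 0$ (at the quitting-game endpoint) and $P^{1,0}(x^\infty) = 0$ (if $x^\infty = a^1$ or $a^2$) forces the existence of some $t$ satisfying the hypothesis of case (i), completing the reduction.

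The main obstacle will be ruling out the residual cases $x^\infty = a^2$ and $x^\infty = a^3$, and correctly pairing each with the appropriate auxiliary family: the symmetric counterpart of Lemma \ref{lemma:alpha equilibrium} for $\Gamma^{0, \delta}_\alpha$ must be available, and one must ensure that by running the Browder path in both the $\Gamma^{\delta, 0}_\alpha$ and $\Gamma^{0, \delta}_\alpha$ families (using conditions (NQ1) and (NQ2), respectively) every non-absorbing limit distinct from $a^1$ produces an intermediate-value crossing with some threshold $c_\ep$. A secondary technical hurdle is to verify continuity of the equilibrium correspondence all the way to the boundary $t = 0$, and to handle the constraint boundary $x_2(c_2^2) = \alpha$ in the restricted games $\Gamma^{\delta,0}_\alpha$, which is where condition (NQ3) on $\mathcal{R}(\Gamma^{1,1}, c)$ is presumably invoked to prevent the path from collapsing onto a joint degeneracy in both coordinates.
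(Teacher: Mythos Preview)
Your overall strategy matches the paper's: apply Browder's Theorem, take limits, and combine an intermediate-value argument with Lemmas~\ref{lemma:alpha equilibrium}, \ref{lemma:auxiliary absorbing}, and \ref{lemma:absorbing equilibrium}. However, two essential ingredients are missing or underspecified, and without them the argument does not close.

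First, the parametrization. You propose a one-parameter family from $\Gamma^{\delta_\ep,0}$ to an unspecified ``$\Gamma_0$,'' and later suggest running \emph{two} separate Browder paths, one in each restricted family $\Gamma^{\delta,0}_\alpha$ and $\Gamma^{0,\delta}_\alpha$. This does not suffice: nothing ties the two paths together, so the residual cases $x^\infty = a^2$ and $x^\infty = a^3$ cannot be simultaneously eliminated. The paper instead uses a \emph{single} parametrization over $\theta \in [-1,2]$ passing through three regimes in order: $\Gamma^{\omega,0}_{1+\theta}$ for $\theta \in [-1,0]$, then $\Gamma^{\theta\omega,(1-\theta)\omega}$ for $\theta \in [0,1]$, then $\Gamma^{0,\omega}_{2-\theta}$ for $\theta \in [1,2]$; a second parameter $\omega$ is sent to $0$ after $\lambda \to 0$, with semi-algebraicity used to keep the limit set connected and the number of regime changes finite. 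The middle region, governed by (NQ3), is precisely what links the two restricted families: if adjacent subintervals of the limiting path land on the segments $[a^1,a^2]$ and $[a^1,a^3]$ respectively, Claim~\ref{claim:side} supplies the equilibrium; if all subintervals have the same type, the endpoint constraint $\alpha = 0$ at $\theta = -1$ or $\theta = 2$ forces the limit to be $a^1$, which is then excluded.

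Second, and more fundamental, you never specify the non-absorbing payoffs in the auxiliary games. The paper sets them equal to \emph{witnesses} $q, q_1, q_2$ of the three non-$Q$-matrices coming from (NQ1)--(NQ3). This is what makes Lemma~\ref{lemma:lambda absorbing} applicable and forces the limit $\xi^0_\omega(\beta)$ to be absorbing in the middle regime (Claim~\ref{claim:1}) and away from $a^1$ at the endpoints. Without the witness construction you cannot justify your assertion that $P^{1,0}(x^1) > 0$ at the quitting-game endpoint: the limit of $\lambda$-discounted equilibria in $\Gamma^{\delta_\ep,0}$ may well be the continue profile $a^1$ unless the non-absorbing payoff has been chosen as a witness vector, and then your intermediate-value argument never gets off the ground.
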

\begin{proof}
	By Lemma \ref{lemma:generic},
	we can assume without loss of generality that $\Gamma$ is a generic game.
	Fix $\ep > 0$.\\
	
	\textbf{Step 1: Construction of a continuous game-valued function}\\
	Since the game $\Gamma$ is an NQL game, the best response matrices of the games $\Gamma^{1,1}$, $\Gamma^{1,0}_0$, and $\Gamma^{0,1}_0$ are not Q-matrices.
	Let $q$, $q_1$ and $q_2$ be witnesses of these matrices (respectfully).
	Denote $\Theta \coloneqq [-1, 2]$, $\Omega \coloneqq (0,1]$,
	and $\mathcal{Q} \coloneqq \left\{q, q_1, q_2\right\}$.
	For every $\theta \in \Theta$ and every $\omega \in \Omega$, define
	\[ \Gamma_{\omega, \theta}(\mathcal{Q}) \coloneqq
	\begin{dcases*}
	\Gamma^{\omega, 0}_{1 + \theta} (-\theta q_1 + (1+\theta) q) \comma
	&$\theta \in [-1,0]$,\\
	\Gamma^{\theta \omega, (1 - \theta) \omega}(q) \comma & $\theta \in [0,1]$,\\
	\Gamma^{0, \omega}_{2 - \theta} ((\theta-1) q_2 + (2-\theta) q) \comma
	&$\theta \in [1,2]$.
	\end{dcases*}
	\]
	\begin{table}[h!]
		\begin{center}
			\begin{tabular}{y{65pt}|| y{90pt}| y{90pt}| y{15pt}}
				& $c_2^1$ & $c_2^2\ (\le 1 + \theta)$ & $q_2$ \\ [0.5ex] 
				\hline \hline
				$c_1^1$ & $- \theta q + (1 + \theta) q_1$ & $ - \theta q + (1 + \theta) q_1$ & * \\ 
				\hline
				$c_1^2$ & $u(a^4)\ ^{\omega}$* & $u(a^4)$ * & * \\
				\hline
				$q_1$ & * & * & * \\
			\end{tabular}
			\quad
			$-1 \le \theta \le 0$
			\quad
			\begin{tabular}{y{60pt}|| y{90pt}| y{90pt}| y{20pt}}
				& $c_2^1$ & $c_2^2$ & $q_2$ \\ [0.5ex] 
				\hline \hline
				$c_1^1$ & $q$ & $u(a^4)\ ^{(1-\theta) \omega}$* & * \\ 
				\hline
				$c_1^2$ & $u(a^4)\ ^{\theta \omega}$* & $u(a^4)$ * & * \\
				\hline
				$q_1$ & * & * & * \\
			\end{tabular}
		\quad
		$0 \le \theta \le 1$
		\quad
				\begin{tabular}{y{60pt}|| y{90pt}| y{90pt}| y{20pt}} 
				& $c_2^1$ & $c_2^2$ & $q_2$ \\ [0.5ex] 
				\hline \hline
				$c_1^1$ & $(2 - \theta) q + (\theta - 1) q_2$ & $u(a^4)\ ^{\omega}$* & * \\ 
				\hline
				$c_1^2\ (\le 2 - \theta)$ & $(2 - \theta) q + (\theta - 1) q_2$ & $u(a^4)$ * & * \\
				\hline
				$q_1$ & * & * & * \\
			\end{tabular}
		\quad
		$1 \le \theta \le 2$
			\caption{The auxiliary games
				$\Gamma_{\omega, \theta}(\mathcal{Q})$ for $\theta \in [-1,0]$ (top),
				for $\theta \in [0,1]$ (middle),
				and for $\theta \in [1,2]$ (bottom).}
			\label{table:auxiliary games of theorem NQL main}
		\end{center}
	\end{table}
	The function $(\omega, \theta) \mapsto \Gamma_{\omega, \theta}(\mathcal{Q})$
	maps an auxiliary game to each pair of parameters $(\omega, \theta) \in \Omega \times \Theta$.
	This function is continuous, in the sense that the absorption function and the payoff function change continuously with $\omega$ and $\theta$.
	For every $\omega \in \Omega$,
	in the game $\Gamma_{\omega, -1}(\mathcal{Q})$ Player 2 cannot play the action $c_2^2$.
	As $\theta$ increases from $-1$ to $0$,
	the probability by which Player 2 can play the action $c_2^2$ in the game $\Gamma_{\omega, \theta}(\mathcal{Q})$ increases to $1$,
	yet this action does not guarantee absorption.
	As $\theta$ increases from $0$ to $1$,
	the probability of absorption under $a^2$ (respectively $a^3$) increases to $\omega$ (respectively decreases to $0$).
	As $\theta$ increases from $1$ to $2$,
	the probability by which Player 1 can play the action $c_1^2$ decreases from $1$ to $0$.
	The non-absorbing payoff changes with $\theta$ as well:
	As $\theta$ increases to $0$, it changes linearly from $q_2$ to $q$;
	as $\theta$ increases from $0$ to $1$, it remains $q$;
	and as $\theta$ increases from $1$ to $2$, it changes linearly from $q$ to $q_1$.\\
	
	\textbf{Step 2: Applying Browder's Theorem}\\
	For every $\lambda > 0$, every $\omega \in \Omega$, and every $\theta \in \Theta$,
	let $\Xi^\lambda_\omega(\theta)$ be the set of all $\lambda$-discounted stationary equilibria of the auxiliary game $\Gamma_{\omega, \theta}(\mathcal{Q})$.
	Fix for a moment $\omega \in \Omega$ and $\lambda > 0$.
	Define
	$\mathcal{M}^\lambda_\omega \coloneqq \left\{ (\theta, \xi(\theta) \mid
	\theta \in \Theta, \xi(\theta) \in \Xi^\lambda_\omega(\theta) \right\}$.
	\cite{browder_1960} implies that
	the set $\mathcal{M}^\lambda_\omega$ has a connected component whose projection to the first coordinate is $\Theta$. Denote this connected component by $M^\lambda_\omega$.\\
	Define $M^0_\omega \coloneqq \limsup_{\lambda \to 0} M^\lambda_\omega
	= \cap_{\lambda > 0} \cup_{\lambda' < \lambda} M^{\lambda'}_\omega $,
	and  $M^0_0 \coloneqq \limsup_{\omega \to 0} M^0_\omega
	= \cap_{\omega > 0} \cup_{\omega' < \omega} M^0_{\omega'}$.
	The sets $(M^\lambda_\omega)_{\lambda, \omega}$, $(M^0_\omega)_{\omega}$, and $M^0_0$
	are all semi-algebraic sets,
	hence the sets $(M^0_\omega)_{\omega}$ are connected
	and their projection to the first coordinate is $\Omega$.
	Consequently, the same holds for the set $M^0_0$.\\

	\textbf{Step 3: Notations}\\
	Since $(M^0_\omega)_{\omega}$ and $M^0_0$ are connected sets whose projection to the first coordinate is $\Omega$,
	there exist continuous semi-algebraic functions
	$(\theta_\omega^0)_{\omega}, \theta_0^0 : [0,1] \to \Theta$,
	and $(\xi_\omega^0)_{\omega}, \xi_0^0 : [0,1] \to (\times_{i \in I} \Delta(A_i))$,
	such that
	\begin{itemize}
		\item[(a)] the image $\theta_0^0$ and $\theta_\omega^0$ for every $\omega \in \Omega$ is $\Theta$,
		\item[(b)] $(\theta_0^0(\beta), \xi_0^0(\beta)) \in M^0_\omega$ for every $\beta \in [0,1]$,
		\item[(c)] $(\theta_\omega^0(\beta), \xi_\omega^0(\beta)) \in M^0_\omega$ for every $\beta \in [0,1]$ and every $\omega \in \Omega$.
	\end{itemize}
	Without loss of generality,
	we can assume that for every $\omega \in \Omega$ we have
	$\theta^0_\omega(0), \theta^0_0(0) = -1$
	and $\theta^0_\omega(1), \theta^0_0(1) = 2$.
	Denote by $\Gamma(\omega, \beta)$ the auxiliary game $\Gamma_{\omega, \theta_{\omega}(\beta)}(\mathcal{Q})$.\\
	We divide the interval $[0,1]$ into smaller intervals, 
	which represent the different type of the auxiliary game admitted by $\Gamma(\omega, \beta)$.
	This is done as follows.
	Define recursively
	\begin{eqnarray*}
	\tau_1 &\coloneqq& -1 \comma \\
	\rho_i &\coloneqq& \inf \left( \left\{ \beta \mid
	\beta > \tau_i, 0 < \theta^0_0(\beta) < 1 \right\} \cup \left\{ 1 \right\} \right) \comma \\
	\tau_{i+1} &\coloneqq& \inf \left\{ \beta \mid 
	\beta > \rho_i, \theta^0_0(\beta) \notin (0,1) \right\} \period
	\end{eqnarray*}
	The definition ends when $\rho_i = 1$.\\
	Since $\theta^0_0(\cdot)$ is a semi-algebraic function,
	it enters the interval $(0,1)$ finitely many times.
	Therefore, there is an integer $i \in \dN$ such that $\rho_i = 1$.
	If $\beta \in (\rho_i, \tau_{i+1})$,
	then for small enough $\omega$,
	the auxiliary game $\Gamma(\omega, \beta)$
	has the structure of the game $\Gamma^{\delta_1, \delta_2}$,
	where $\delta_1 = \theta_{\omega}(\beta) \cdot \omega$ and $\delta_2 = (1-\theta_{\omega}(\beta)) \cdot \omega$.
	If $\beta \in (\tau_i, \rho_i)$,
	then for small enough $\omega$,
	the auxiliary game $\Gamma(\omega, \beta)$
	has the structure of either the game $\Gamma^{\delta, 0}_\alpha$ or the game $\Gamma^{0, \delta}_\alpha$, for some $\delta$ and $\alpha$.\\
	
	\textbf{Step 4: Properties of $\bm{\xi^0_0(\beta)}$ for $\bm{\rho_i \le \beta \le \tau_{i+1}}$}\\
	By the definition of $\tau$ and $\rho$,
	if $\rho_i < \beta < \tau_{i+1}$ then $0 < \theta_0^0(\beta) < 1$,
	and $\Gamma(\omega, \beta)$ is $\Gamma^{\delta_1, \delta_2}$,
	with $\delta_1 = \theta_{\omega}(\beta) \cdot \omega$ and $\delta_2 = (1-\theta_{\omega}(\beta)) \cdot \omega$.
	If $\beta \in \left\{\rho_i, \tau_{i+1}\right\}$,
	then $\theta_0^0(\beta) \in \left\{0,1\right\}$,
	and the auxiliary game $\Gamma(\omega, \beta)$ is either $\Gamma^{\delta, 0}$ or $\Gamma^{0, \delta}$,
	for some $\delta >0$.
	We will present some properties of both $\xi^0_\omega$ and $\xi^0_0$ in these cases.
	Lemma \ref{lemma:lambda absorbing} implies the following.
	\begin{claim}
	\label{claim:1}
		For every $\omega \in (0,1]$,
		if $\rho_i < \beta < \tau_{i+1}$
		then $\xi^0_\omega(\beta)$ is absorbing in $\Gamma(\omega,\beta)$.
	\end{claim}
	From Claim \ref{claim:1} we deduce that $\xi^0_\omega(\beta)$ is an equilibrium in $\Gamma(\omega,\beta)$ for every $\omega > 0$.
	Since $\lim_{\omega \to 0} \xi^0_\omega(\beta) = \xi^0_0(\beta)$,
	using Lemma \ref{lemma:auxiliary absorbing} we obtain the following result
	\begin{claim}
	\label{claim:2}
		For every $\rho_i < \beta < \tau_{i+1}$,
		we have $\xi^0_0(\beta) \ne a^1$.
	\end{claim}
	As the following claim asserts, Claim \ref{claim:2} extends to $\beta = \rho_i$ and $\beta = \tau_{i + 1}$.
	\begin{claim}
	\label{claim:3}
		$\xi^0_0(\rho_i), \xi^0_0(\tau_{i+1}) \ne a^1$.
	\end{claim}
	\begin{proof}
		We will argue that $\xi^0_0(\rho_i) \ne a^1$;
		the proof for $\xi^0_0(\tau_{i+1})$ is analogous.
		Since $\xi^0_\omega(\beta)$ is an equilibrium in $\Gamma(\omega,\beta)$
		for every $\omega > 0$ and every $\rho_i < \beta < \tau_{i+1}$,
		$\lim_{\omega \to 0^+} \xi^0_\omega(\rho_i + \omega) = \xi^0_0(\rho_i)$,
		the result follows from Lemma \ref{lemma:auxiliary absorbing}.
	\end{proof}\\\\
	Since $\xi^0_0(\beta) \ne a^1$ for every $\rho_i \le \beta \le \tau_{i+1}$,
	Claims \ref{claim:2} and \ref{claim:3}, together with Lemma \ref{lemma:absorbing equilibrium},
	yields the following.
	\begin{claim}
	\label{claim:real absorbing}
		For every $\rho_i \le \beta \le \tau_{i+1}$,
		if $\xi^0_0(\beta)$ is absorbing in $\Gamma$,
		then $\Gamma$ admits an almost stationary $\ep$-equilibrium.
	\end{claim}
	
	It is left to handle the case where
	$\xi^0_0(\beta) = (1-t_{\beta}) a^1 + t_\beta a^2$
	or $\xi^0_0(\beta) = (1-t_{\beta}) a^1 + t_\beta a^3$,
	for every $\beta \in [\rho_i, \tau_{i + 1}]$, where $t_\beta > 0$.
	The interval $[\rho_i, \tau_{i + 1}]$ is called \emph{a type 1 interval} if
	$\xi^0_0(\beta) = (1-t_{\beta}) a^1 + t_\beta a^2$ for every $\beta \in [\rho_i, \tau_{i + 1}]$.
	The interval $[\rho_i, \tau_{i + 1}]$ is called \emph{a type 2 interval} if
	$\xi^0_0(\beta) = (1-t_{\beta}) a^1 + t_\beta a^3$ for every $\beta \in [\rho_i, \tau_{i + 1}]$.
	We argue that the interval $[\rho_i, \tau_{i + 1}]$ is either a type 1 interval or a type 2 interval.
	Indeed, otherwise, there are
	$\beta, \beta' \in [\rho_i, \tau_{i + 1}]$,
	such that $\xi^0_0(\beta) = (1-t_{\beta}) a^1 + t_\beta a^2$
	and $\xi^0_0(\beta') = (1-t_{\beta'}) a^1 + t_\beta a^3$.
	The continuity of $\xi$ implies that one of the following two conditions holds:
	\begin{itemize}
		\item[($\iota$)] $\xi^0_0(\beta'') = a^1$ for some $\beta'' \in [\rho_i, \tau_{i + 1}]$,
		\item[($\iota \iota$)] $\xi^0_0(\beta'')$ is absorbing in the game $\Gamma$ for some $\beta'' \in [\rho_i, \tau_{i + 1}]$.
	\end{itemize}
	The former alternative is impossible due to Claim \ref{claim:2},
	while by Claim \ref{claim:real absorbing},
	the latter alternative implies that the game $\Gamma$ admits an almost stationary $\ep$-equilibrium.
	It follows that it is left to handle the case where $[\rho_i, \tau_{i + 1}]$ is either a type 1 interval or a type 2 interval.\\

	\textbf{Step 5: Properties of $\bm{\xi^0_0(\beta)}$ where $\bm{\tau_i \le \beta \le \rho_i}$}\\
	By the definition of $\tau$ and $\rho$,
	if $\tau_i \le \beta \le \rho_i$ then either
	$\theta_0^0(\beta) \le 0$,
	in which case $\Gamma(\omega, \beta) = \Gamma^{\omega, 0}_\alpha$ for some $\alpha \in [0,1]$;
	or $\theta_0^0(\beta) \ge 1$,
	in which case $\Gamma(\omega, \beta) = \Gamma^{0, \delta}_\alpha$
	for some $\alpha \in [0,1]$.
	\begin{claim}
	\label{claim:4}
		There exists $c_\ep' > 0$,
		such that for every $\beta \in [0,1]$,
		if $\theta^0_0(\beta) \le 0$
		and $0 < P^{0,1}(\xi^0_0(\beta)) < c_\ep'$,
		then $\Gamma$ admits an almost stationary $\ep$-equilibrium.
	\end{claim}
	\begin{proof}
	Let $c_\ep, \delta_\ep > 0$ be given by Lemma \ref{lemma:alpha equilibrium},
	and set $c_\ep' \coloneqq \frac{c_\ep}{2}$.
	If $0 < P^{0,1}(\xi^0_0(\beta)) < c_\ep'$,
	then there are $\beta'$ close to $\beta$ and $\omega < \delta_\ep$,
	such that $0 < P^{0,1}(\xi^0_\omega(\beta')) < c_\ep$.
	The result follows from Lemma \ref{lemma:alpha equilibrium}.
	\end{proof}\\\\
	An analogous result holds when $\theta^0_0(\beta) \ge 1$.
	\begin{claim}
	\label{claim:5}
		There exist $c_\ep > 0$,
		such that for every $\beta \in [0,1]$,
		if $\theta^0_0(\beta) \ge 1$
		and $0 < P^{1,0}(\xi^0_0(\beta)) < c_\ep$,
		then the game $\Gamma$ admits an almost stationary $\ep$-equilibrium.
	\end{claim}
	\begin{claim}
	\label{claim:side}
		Let $\beta, \beta' \in [\tau_i, \rho_i]$.
		If there are $t, t' \in (0,1]$ such that
		$\xi^0_0(\beta) = (1 - t) a^1 + t a^2$
		and $\xi^0_0(\beta') = (1 - t') a^1 + t' a^3$,
		then $\Gamma$ admits an almost stationary $\ep$-equilibrium.
	\end{claim}
	\begin{proof}
	Without loss of generality, $\theta^0_0(\beta'') \le 0$ for every $\beta'' \in [\tau_i, \rho_i]$.
	We know that $P^{0,1}(\xi^0_0(\beta)) > 0$ and $P^{0,1}(\xi^0_0(\beta')) = 0$.
	By the intermediate value theorem, there is $\beta''$ between $\beta$ and $\beta'$,
	such that $0 < P^{0,1}(\xi^0_0(\beta)) < c'_\ep$.
	The result follow from Claim \ref{claim:4}.
	\end{proof}\\\\		
	We deduce from Claim \ref{claim:side} that if $[\rho_i, \tau_{i+1}]$ is a type 1 interval
	(and therefore $\xi^0_0(\tau_{i+1}) = (1 - t) a^1 + t a^2$, for some $t > 0$),
	and $[\rho_{i+1}, \tau_{i+2}]$ is a type 2 interval,
	(and therefore $\xi^0_0(\rho_{i+1}) = (1 - t') a^1 + t' a^3$, for some $t' > 0$),
	then $\Gamma$ admits an almost stationary $\ep$-equilibrium.
	Hence, it left to handle the case where all intervals $[\rho_i, \tau_{i+1}]$ have the same type.\\
	
	\textbf{Step 6: The connected component ends}\\
	Without loss of generality, assume that all intervals $[\rho_i, \tau_{i+1}]$ have type 1.
	Then, $\xi^0_0(\rho_1) = (1 - t) a^1 + t a^2$ (where $t > 0$),
	and hence $P^{0,1}(\xi^0_0(\rho_1)) = 0$.
	If $\xi^0_0(0)$ is not a mixture of $a^1$ and $a^2$,
	then $P^{0,1}(\xi^0_0(0)) > 0$.
	Therefore, for every $c > 0$, there is $\beta \in [\tau_1, \rho_1]$ such that $0 < P^{0,1}(\xi^0_0(\beta)) < c$.
	By Claim \ref{claim:4}, the game $\Gamma$ admits an almost stationary $\ep$-equilibrium.\\
	If $\xi^0_0(0)$ is a mixture of $a^1$ and $a^2$,
	then, since $\theta^0_0(0) = -1$,
	we deduce that $\xi^0_0(0) = a^1$.
	We will prove that this case is not possible, and conclude the proof.
	As in Claim \ref{claim:1}, one can show that $\xi^0_\omega(0) \neq a^1$ for every $\omega > 0$.
	Therefore $\xi^0_\omega(0)$ are equilibria in $\Gamma(\omega,0)$ for every $\omega > 0$.
	The game $\lim_{\omega \to 0} \Gamma(\omega, 0)$ is a general quitting game
	with a single player who has two continue actions,
	while the other players have a single continue action.
	Since $\lim_{\omega \to 0} \xi^0_\omega(0) = a^1$,
	this is a contradiction, as in Claim \ref{claim:3}.
\end{proof}

\subsection{Uniform Sunspot $\ep$-equilibrium}
\label{subsection:L-shaped proof}
In this section we prove Lemma \ref{lemma:L-shaped main},
which state that every positive recursive L-shaped game admits a uniform sunspot $\ep$-equilibrium,
thus completing the proof of Theorem \ref{theorem:main}.\\
\begin{proof}
	Let $\ep>0$, and let $\Gamma$ be an L-shaped game. $\Gamma$ is either NQL game or QL game.
	If $\Gamma$ is an NQL game, then then by Lemma \ref{lemma:NQL main}, it admits an almost uniform $\ep$-equilibrium.
	If $\Gamma$ is a QL game, then by Lemma \ref{lemma:Q-L sunspot}, it admits a uniform sunspot $\ep$-equilibrium.
\end{proof}

\section{Discussion}
\label{section:discussion}
The problem that inspired this paper is whether every stochastic game admits a uniform sunspot $\ep$-equilibrium.
\cite{general_quitting} proved that every general quitting game admits such an equilibrium,
and in this paper, we proved the result to another family of absorbing games.
To do so, we developed two techniques that were used in this paper.\\
The first technique, which was introduced in Section \ref{section:spotted},
consists of dividing the set of non-absorbing mixed action profiles to equivalence classes.
Two mixed action profiles, $x$ and $y$, are in the same equivalence class
if there is $i$ such that $x_{-i} = y_{-i}$.
In spotted games, each equivalence class is composed of a single action profile.
We then considered, for each equivalence class,
an auxiliary game where all mixed action profiles that do not belong to that class are absorbing.
We then showed that a uniform sunspot $\ep$-equilibrium in such an auxiliary game,
in which the players play mainly a non-absorbing mixed action profile is a uniform sunspot $\ep$-equilibrium in the original game.
We also proved that if all these auxiliary games do not admit a uniform sunspot $\ep$-equilibrium,
in which the players play mainly a non-absorbing mixed action profile,
then a uniform absorbing stationary $0$-equilibrium exists.
The second technique, which was introduced in Section \ref{section:L-shaped},
consists of two families of auxiliary games,
where the limit of $\lambda$-discounted $\ep$-equilibrium in the auxiliary games allow us to explore and find uniform sunspot $\ep$-equilibrium in the original game.\\
We believe that by combining these two techniques,
one can proved the existence of a uniform sunspot $\ep$-equilibrium in more general families of absorbing games,
specifically in quitting absorbing games, that were introduced in Definition \ref{definition:quitting absorbing game}.

\bibliographystyle{chicago}
\bibliography{sunspot_eq_in_2d_games}

\end{document}